\newtheorem {theorem}{Theorem}
\newtheorem {lemma}[theorem]{Lemma}
\newtheorem {proposition}[theorem]{Proposition}
\newtheorem {corollary}[theorem]{Corollary}
\newtheorem {conjecture}[theorem]{Conjecture}
\newtheorem {definition}[theorem]{Definition}
\newtheorem {fact}[theorem]{Fact}
\newtheorem {question}[theorem]{Question}
\theoremstyle{remark}
\newtheorem {remark}[theorem]{Remark}
\numberwithin{equation}{section}
\numberwithin{theorem}{section}
\DeclareMathOperator{\rank}{rank}
\def\ep{\varepsilon}
\newcommand{\comments}[1]{}
\DeclareMathOperator{\Z}{\mathbb{Z}}
\newcommand{\Rr}{\mathbb{R}}
\newcommand{\bfalpha}{\boldsymbol{\alpha}}
\newcommand{\bfk}{\mathbf{k}}
\newcommand{\zed}{\mathbb{Z}}
\newcommand{\tsigma}{\widetilde{\sigma}}
\newcommand{\ttau}{\widetilde{\tau}}
\newcommand{\tphi}{\widetilde{\varphi}}
\newcommand{\tgamma}{\widetilde{\gamma}}
\newcommand{\tlambda}{\widetilde{\lambda}}
\newcommand{\talpha}{\widetilde{\alpha}}
\newcommand{\tmu}{\widetilde{\mu}}
\newcommand{\tW}{\widetilde{W}}
\author{Cameron Gordon}
\address{Department of Mathematics\\The University of Texas at Austin\\  Austin TX, 78701\\USA}
\email{gordon@math.utexas.edu}
\author{Tye Lidman}
\address{Department of Mathematics\\North Carolina State University\\ Raleigh NC, 27603\\USA}
\email{tlid@math.ncsu.edu}
\title{Taut foliations, left-orderability, and cyclic branched covers}
\begin{document}
\maketitle

\begin{abstract}
We study the question of when cyclic branched covers of knots admit taut foliations, have left-orderable fundamental groups, and are not L-spaces. \\ \\ \\
\noindent \textbf{Keywords:} taut foliation, left-orderability, cyclic branched cover \\ \smallskip
\noindent \textbf{MSC 2010 Classification:} 57M12, 57M25, 57R30, 06F15 
\end{abstract}

\section{Introduction}\label{sec:intro}

The study of taut foliations has led to a number of important advances in knot theory and three-manifold topology.  Consequently, any connection between taut foliations and other invariants in low-dimensional topology can be quite useful.  One example of this is that taut foliations guarantee the non-triviality of certain analytically-defined invariants associated to three-manifolds, namely various types of Floer homology.  This non-triviality plays a key role in the proofs of Property P \cite{KM} and the Dehn surgery characterization of the unknot \cite{KMOS}.  One such non-triviality statement is that if a rational homology sphere $Y$ admits a co-orientable taut foliation, then $Y$ is not an {\em L-space} (i.e. $\rank \widehat{HF}(Y) > |H_1(Y;\mathbb{Z})|$, where $\widehat{HF}$ denotes the hat-flavor of Heegaard Floer homology) \cite[Theorem 1.4]{OSGenus}.  Consequently, L-spaces form an interesting family of manifolds that have been intensely studied.  Some useful examples of L-spaces to keep in mind for this paper are manifolds with finite fundamental group \cite[Proposition 2.3]{OSLens}.  

Taut foliations are also related to the fundamental group of the underlying three-manifold.  For instance, any loop transverse to the foliation must represent a non-trivial element in $\pi_1$.  More structure can often be found.  Thurston's universal circle construction shows that for a co-orientable taut foliation with hyperbolic leaves, there is an orientation-preserving action of the fundamental group on a circle; for more details see \cite{CD}.  Furthermore, in many cases, one can actually find an orientation-preserving action on the real line, such as when the manifold is an integer homology sphere \cite[Lemma 0.4]{BB} or the foliation is $\mathbb{R}$-covered \cite[Corollary 7.11]{CD}.  Since $\operatorname{Homeo}_+(\mathbb{R})$ is {\em left-orderable} (i.e. admits a left-invariant strict total order), \cite[Theorem 1.1]{BRW} shows that admitting such an action implies that $\pi_1(Y)$ is left-orderable.  

Thus, there is a strong relationship between left-orderability, taut foliations, and L-spaces.  In fact, there are at present no counterexamples known to the possibility that, for an irreducible rational homology sphere $Y$, the following three conditions are equivalent:
\begin{flalign}\label{equiv:lo} &\text{$\pi_1(Y)$ is left-orderable,}&& \end{flalign} $ $ \\ \vspace{-.6in} \\
\begin{flalign}\label{equiv:ctf} &\text{$Y$ admits a co-orientable taut foliation,}&& \end{flalign} $ $ \\ \vspace{-.6in} \\
\begin{flalign}\label{equiv:lspace} &\text{$Y$ is not an L-space.} &&  \end{flalign}
The equivalence of \eqref{equiv:lo} and \eqref{equiv:lspace} was explicitly conjectured in \cite{BGW}.  The equivalence of \eqref{equiv:lo}, \eqref{equiv:ctf}, and \eqref{equiv:lspace}, has been established for many families of three-manifolds, including Seifert manifolds \cite{BGW,BRW, LS}, Sol manifolds \cite{BGW}, and graph manifold homology spheres \cite{BB}.  Also, it is shown in \cite{BC} that \eqref{equiv:lo} and \eqref{equiv:ctf} are equivalent for graph manifolds.  However, the only relation between \eqref{equiv:lo}, \eqref{equiv:ctf}, and \eqref{equiv:lspace} that is unconditionally known is the implication mentioned above: if $Y$ is an L-space, then $Y$ does not admit a co-orientable taut foliation \cite{OSGenus, KR2, B}. 

One reason why the possible equivalence of \eqref{equiv:lo}, \eqref{equiv:ctf}, and \eqref{equiv:lspace} is desirable is that they exhibit different types of behavior.  For example, left-orderability is well-understood under non-zero degree maps, while Floer homology is well-understood under Dehn surgery.  The equivalence of \eqref{equiv:lo}, \eqref{equiv:ctf}, and \eqref{equiv:lspace} would allow one to use these properties in tandem.  We mention that it would follow from the equivalence of \eqref{equiv:lo} and \eqref{equiv:lspace} that a toroidal integer homology sphere can never be an L-space (see \cite[Section 4]{CLW}).  

\begin{definition}
A closed, connected, orientable three-manifold $Y$ is {\em excellent} if $Y$ admits a co-orientable taut foliation and $\pi_1(Y)$ is left-orderable.  Consequently, $Y$ is prime and is not an L-space.  On the other hand, $Y$ is called a {\em total L-space} if all three conditions \eqref{equiv:lo}, \eqref{equiv:ctf}, and \eqref{equiv:lspace} fail.  In other words, a total L-space is an L-space whose fundamental group is not left-orderable.  
\end{definition}

We remark that if $b_1(Y) > 0$ and $Y$ is prime, then $Y$ always admits a co-orientable taut foliation \cite{Gabai1} and has left-orderable fundamental group \cite[Corollary 3.4]{BRW}, and thus is excellent.  Many examples of total L-spaces have come from branched covers of knots, including all two-fold branched covers of non-split alternating links \cite{BGW, OSBranched} and some higher-order branched covers of two-bridge knots \cite{DPT2005,Peters}.  In this paper, we study when cyclic branched covers of certain other families of knots give excellent manifolds or total L-spaces.  We will denote the $n$-fold cyclic branched cover of a knot $K$ by $\Sigma_n(K)$ and will always assume $n \geq 2$.  Note that by the Smith conjecture \cite{BM}, $\Sigma_n(K)$ is simply-connected only if $K$ is trivial.

A key input throughout the paper is that, as mentioned above, the equivalence of \eqref{equiv:lo}, \eqref{equiv:ctf}, and \eqref{equiv:lspace} is known for Seifert fibered manifolds.  This enables us to give a complete analysis of the case of torus knots.  Throughout, we let $T_{p,q}$ denote the $(p,q)$-torus link.  

\begin{theorem}\label{thm:torus} Let $n, p, q \geq 2$ and suppose $p,q$ are relatively prime.  Then, $\Sigma_n(T_{p,q})$ is excellent if and only if its fundamental group is infinite.  Thus, $\Sigma_n(T_{p,q})$ is excellent except in the cases:
\begin{enumerate}[(i)]
\item $\{p,q\} = \{2,3\}$, $2 \leq n \leq 5$,
\item $\{p,q\} = \{2,5\}$, $2 \leq n \leq 3$,
\item $\{p,q\} = \{2,r\}$, $r \geq 7$, $n = 2$,
\item $\{p,q\} = \{3,4\}$, $n = 2$,
\item $\{p,q\} = \{3,5\}$, $n = 2$,
\end{enumerate}
in which case $\Sigma_n(T_{p,q})$ is a total L-space.  
\end{theorem}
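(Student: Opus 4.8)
The plan is to exploit the classical fact, going back to Milnor, that the $n$-fold cyclic branched cover $\Sigma_n(T_{p,q})$ is the Brieskorn manifold $\Sigma(p,q,n)$, and in particular is Seifert fibered over the orbifold $S^2(p,q,n)$. Since the equivalence of \eqref{equiv:lo}, \eqref{equiv:ctf}, and \eqref{equiv:lspace} is known for Seifert fibered spaces, for these manifolds ``excellent'' becomes interchangeable with ``not an L-space'' (and ``total L-space'' with ``L-space''), so the theorem reduces to the single assertion that $\Sigma_n(T_{p,q})$ is an L-space if and only if $\pi_1$ is finite. As finiteness of $\pi_1$ is equivalent to the base orbifold being spherical, i.e.\ to $\tfrac1p+\tfrac1q+\tfrac1n>1$, I would first carry out the elementary Diophantine enumeration of the triples with $p,q,n\ge 2$, $\gcd(p,q)=1$, and $\tfrac1p+\tfrac1q+\tfrac1n>1$. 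Sorting the standard spherical triples $(2,2,m)$, $(2,3,3)$, $(2,3,4)$, $(2,3,5)$ according to which coprime pair plays the role of $\{p,q\}$ produces precisely the list (i)--(v); this is a short finite check.

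For the direction ``finite $\pi_1 \Rightarrow$ total L-space'', observe that when $\pi_1$ is finite, $\Sigma_n(T_{p,q})$ is a spherical space form. It is an L-space by \cite[Proposition 2.3]{OSLens}, so \eqref{equiv:lspace} fails, and by the one unconditional implication \cite[Theorem 1.4]{OSGenus} so does \eqref{equiv:ctf}. Moreover $\pi_1$ is a finite group that is nontrivial, since $T_{p,q}$ is nontrivial and hence $\Sigma_n(T_{p,q})$ is not simply connected by the Smith conjecture \cite{BM}; a nontrivial finite group has torsion and so cannot be left-orderable, and thus \eqref{equiv:lo} fails as well. Therefore all three conditions fail and $\Sigma_n(T_{p,q})$ is a total L-space.

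For the converse, suppose $\pi_1$ is infinite, so the base orbifold is Euclidean or hyperbolic. If $b_1>0$, then $\Sigma_n(T_{p,q})$, being an irreducible Seifert fibered space over $S^2$ with three exceptional fibers, is prime, and the remark following the definition of excellence (Gabai \cite{Gabai1} together with \cite[Corollary 3.4]{BRW}) shows directly that it is excellent; this disposes of the Euclidean cover $\Sigma_6(T_{2,3})$ and any other positive-$b_1$ covers without invoking any Floer theory. It then remains to treat the rational homology spheres with infinite $\pi_1$, namely the Seifert fibered spaces over a hyperbolic base orbifold with nonzero Euler number. Here I would compute the Seifert invariants of $\Sigma(p,q,n)$ explicitly and invoke the classification of Seifert fibered L-spaces --- equivalently, the Jankins--Neumann--Naimi criterion for the existence of a horizontal foliation --- to conclude that such a manifold is \emph{not} an L-space. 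Concretely, for these Brieskorn data the Euler number satisfies the Milnor--Wood inequality $|e|\le|\chi^{\mathrm{orb}}|$ relative to the hyperbolic base, which produces a horizontal, hence taut, foliation and rules out the L-space property.

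The main obstacle is precisely this last step. It is \emph{false} in general that an infinite-$\pi_1$ Seifert fibered rational homology sphere fails to be an L-space --- large surgeries on torus knots furnish Seifert fibered L-spaces with infinite fundamental group --- so one cannot argue from the geometry of the base alone and must use the special symmetric form of the Seifert invariants of a branched cover of a torus knot to verify the horizontal-foliation inequality. Once the not-an-L-space conclusion is established in every infinite-$\pi_1$ case, the known Seifert equivalence of \eqref{equiv:lo}, \eqref{equiv:ctf}, and \eqref{equiv:lspace} immediately upgrades it to excellence, completing the proof.
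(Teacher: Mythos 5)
Your overall framing is sound and matches the paper's: since $\Sigma_n(T_{p,q})$ is Seifert fibered, Theorem~\ref{thm:sfequiv} reduces the theorem to showing that $\Sigma_n(T_{p,q})$ admits a horizontal foliation exactly when $\pi_1$ is infinite; your Diophantine enumeration of the spherical triples reproducing the list (i)--(v) is correct, and your treatment of the finite-$\pi_1$ direction (L-space by \cite[Proposition 2.3]{OSLens}, non-LO by torsion and the Smith conjecture) is exactly how the paper disposes of that case. The gap is that the infinite-$\pi_1$ direction --- which is essentially the entire content of the theorem --- is not actually proved, and the one concrete mechanism you offer for it is false. You claim that for hyperbolic base orbifold the Milnor--Wood inequality $|e|\le|\chi^{\mathrm{orb}}|$ ``produces a horizontal, hence taut, foliation.'' Milnor--Wood is a \emph{necessary} condition for a horizontal foliation, not a sufficient one; the correct criterion is the Eisenbud--Hirsch--Neumann/Jankins--Neumann/Naimi realizability condition (Theorem~\ref{thm:brwcondition}), which requires dominating the normalized Seifert invariants by a tuple of the form $\left(\frac{a}{m},\frac{m-a}{m},\frac{1}{m},\ldots,\frac{1}{m}\right)$ and is strictly stronger than any inequality in $(e,\chi^{\mathrm{orb}})$ alone. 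For instance, $Y = M\left(-1,\frac{7}{9},\frac{5}{9},\frac{2}{9}\right)$ has hyperbolic base $S^2(9,9,9)$ with $|e| = \frac{5}{9} < \frac{2}{3} = |\chi^{\mathrm{orb}}|$, yet one checks directly that no permutation and no choice of $0<a<m$ satisfies Condition~\eqref{condition:-1} (and Conditions~\eqref{condition:fibers}, \eqref{condition:-(n-1)} fail since $b=-1$ and $n=3$), so $Y$ admits no horizontal foliation and is an L-space with infinite $\pi_1$. So the step as you state it would fail, and your own closing paragraph concedes as much without repairing it.

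What fills this gap in the paper is a substantial case analysis, not a single inequality: the Seifert invariants of $\Sigma_n(T_{p,q})$ are taken from N\'u\~nez--Ram\'irez-Losada \cite[Theorem 1]{NR}, the coprime case $\gcd(n,pq)=1$ is handled by citing \cite[Corollary 3.12]{BRW} (Proposition~\ref{prop:relativelyprime}), the case where some divisor $r$ of $n$ divides $p$ or $q$ is handled by verifying Theorem~\ref{thm:brwcondition} by hand through several subcases (Proposition~\ref{prop:primefactor}), and these are leveraged to all $n$ via the divisibility reduction Lemma~\ref{lem:excellentbranchedseifert}, leaving seven sporadic triples that are checked individually. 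Your plan of one uniform computation over all $(p,q,n)$ would have to reproduce essentially all of this work; as written, the proposal establishes the easy direction and the enumeration but not the theorem.
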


Both left-orderability and taut foliations have been studied in the context of gluing manifolds with toral boundary (see for instance \cite{BC, CLW}).  For this reason, we will also study certain families of satellite knots.  While bordered Floer homology is suitable machinery for studying the Heegaard Floer homology of manifolds glued along surfaces \cite{LOT}, we will not focus on its use here.  Let $K'$ be a satellite knot with non-trivial companion $K$; then the complement of $K'$ contains a copy of the exterior $X$ of $K$.  In $\Sigma_n(K')$, the preimage of $X$ consists of copies of some (possibly trivial) cyclic cover of $X$, and the proof of \cite[Theorem 1]{GL} shows that if $K'$ is prime, then the boundary components of this preimage are incompressible in $\Sigma_n(K')$.  

Work of Li and Roberts \cite{LR} provides abundant examples of taut foliations on the exteriors of non-trivial knots in $S^3$ (see Section~\ref{sec:background} for more precise statements).  We will utilize these foliations heavily.  In the case of cables, the remaining piece of $\Sigma_n(K')$ is Seifert fibered, and so we are able to get a nearly complete result in this case.  Let $C_{p,q}(K)$ denote the $(p,q)$-cable of a knot $K$, where $q$ denotes the longitudinal winding.  Assume $q > 1$.        

\begin{theorem}\label{thm:cables}
Let $K$ be a non-trivial knot in $S^3$.  Then, $\Sigma_n(C_{p,q}(K))$ is excellent, except possibly if $n = q = 2$.  
\end{theorem}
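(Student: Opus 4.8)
The plan is to exhibit $\Sigma_n(C_{p,q}(K))$ as a toroidal manifold glued from pieces we understand, and then to build a co-orientable taut foliation and a left-order directly by matching data across the decomposing tori. \emph{First I would record the decomposition.} Writing the cable exterior as $E(C_{p,q}(K)) = E(K)\cup_T CS$, where $CS$ is the Seifert-fibered cable space, $T=\partial N(K)$, and the regular fiber of $CS$ meets $T$ in the curve $h=p\mu_K+q\lambda_K$, the $n$-fold cyclic branched cover over the cable takes the form $\Sigma_n(C_{p,q}(K)) = M\cup(\text{$d$ copies of }\widetilde E)$. Here $d=\gcd(n,q)$, the space $\widetilde E$ is the $(n/d)$-fold cyclic cover of $E(K)$, and $M$ is the Seifert-fibered $n$-fold branched cover of $CS$. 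Since $C_{p,q}(K)$ is prime with nontrivial companion, the argument of \cite[Theorem 1]{GL} cited above shows that the $d$ tori along which these pieces meet are incompressible, so the decomposition is genuine. Capping off the $d$ boundary tori of $M$ by solid tori recovers $\Sigma_n(T_{p,q})$, so the Seifert invariants of $M$ and the slopes its fibers cut on each boundary torus are extracted from the torus-knot analysis underlying Theorem \ref{thm:torus}.

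\emph{Next I would foliate each piece and glue.} On every copy of $\widetilde E$ I lift a Li--Roberts taut foliation from $E(K)$; \cite{LR} allows the boundary slope to be prescribed throughout a full interval of values, the excluded one being the longitude. On $M$ I use the known theory of taut foliations on Seifert fibered manifolds (the key input recorded in the introduction) to produce a horizontal co-orientable taut foliation meeting each boundary torus in parallel simple closed curves of a slope $s$ that can be chosen away from the fiber slope $h$. Choosing $s$ to be simultaneously realizable on the two sides of each torus and gluing tangentially along the tori yields a co-orientable taut foliation on $\Sigma_n(C_{p,q}(K))$; this is possible precisely when $s$ can be taken distinct from the longitude of $K$.

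\emph{For left-orderability}, I note that $b_1(\widetilde E)>0$, so $\pi_1(\widetilde E)$ is left-orderable by \cite[Corollary 3.4]{BRW}, and $\pi_1(M)$ is left-orderable because $M$ is a Seifert piece with boundary. Matching left-orders across the $\mathbb{Z}^2$ edge groups at the same slope $s$ used for the foliation --- that is, applying a gluing theorem for left-orders along the incompressible tori of the decomposition --- produces a left-order on $\pi_1(\Sigma_n(C_{p,q}(K)))$. Together with the foliation this shows the manifold is excellent.

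\emph{The main obstacle} is the slope matching on $M$, and in particular isolating the single configuration where it fails. The case $n=q=2$ is exactly this failure: here $d=2$, so $\widetilde E=E(K)$ occurs in two untwisted copies, and the slope that $M$ forces on each $\partial E(K)$ is driven to the longitude --- the one value \cite{LR} cannot supply, and the slope at which the order-gluing also degenerates. This is no accident, since for the unknotted companion $U$ one has $\Sigma_2(C_{p,2}(U))=\Sigma_2(T_{p,2})$, a lens space and hence a total L-space by Theorem \ref{thm:torus}; thus the Seifert piece $M$ arising when $n=q=2$ carries no suitable horizontal foliation, and this case must be excluded.
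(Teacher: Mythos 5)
Your decomposition is exactly the paper's (your $M$ is its $\widetilde{W}$, your $\widetilde{E}$ its $\widetilde{X}$), and the skeleton of your argument---find a slope on each decomposing torus that is excellent for both adjacent pieces, then glue via a lemma like Lemma~\ref{lem:excellentgluing}---is also the paper's. But the step carrying all the weight is asserted rather than proved, and as stated it is false: a Seifert fibered piece with boundary does \emph{not} admit a horizontal co-orientable taut foliation meeting the boundary in an arbitrary slope ``chosen away from the fiber slope.'' The realizable boundary multislopes are constrained by the Eisenbud--Hirsch--Neumann/Jankins--Neumann/Naimi conditions (Theorem~\ref{thm:brwcondition}), imported through Lemma~\ref{lem:seifertboundary} via the Dehn fillings of the piece; for example, the CTF slopes of the trefoil exterior form exactly $(-\infty,1)$ \cite{RobertsPunctured1}, nowhere near the complement of the single fiber slope $6$. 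Verifying that a matching excellent slope exists is the entire content of the paper's Section~4: in the generic case (Proposition~\ref{prop:generic}) one fills $\widetilde{W}$ along the meridians $\widetilde{\sigma}_i$ to get $\Sigma_n(T_{p,q})$, which is excellent by Theorem~\ref{thm:torus}, so Lemma~\ref{lem:seifertboundary} applies, and the matching slope on the knot-exterior side is the longitude (CTF by \cite{Gabai3}, LO since $b_1>0$); but when $\Sigma_n(T_{p,q})$ has finite fundamental group this fails, and the paper must switch to slopes $\widetilde{\alpha}_k$ with $|k|\gg 0$, compute the Seifert invariants of $\widetilde{W}(\widetilde{\alpha}_k,\ldots,\widetilde{\alpha}_k)$ from \cite{NR}, and check Theorem~\ref{thm:brwcondition} triple by triple (Propositions~\ref{prop:exceptional}, \ref{prop:234}, and, for $p=1$, \ref{prop:1q} via Proposition~\ref{prop:negtorus}). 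Your proposal contains no mechanism for this verification and does not register that the exceptional torus-knot triples force a different choice of slope than the generic one. Relatedly, your left-orderability step conflates two different things: left-orderability of $\pi_1(\widetilde{E})$ and $\pi_1(M)$ is not the hypothesis of any gluing theorem available here. Lemma~\ref{lem:excellentgluing} (and Theorem~\ref{thm:jsjlo}) require the chosen slopes to be \emph{LO multislopes}, i.e.\ the fillings $\widetilde{E}(\widetilde{\alpha})$ and $M(s,\ldots,s)$ must have left-orderable fundamental group---which is again the same realizability problem. (Gluing actual orders along the $\mathbb{Z}^2$ edge subgroups is the detection framework of \cite{BC}; you are not invoking it, and it is a genuinely different and harder route.)

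Your explanation of the excluded case $n=q=2$ is also wrong on the facts. The longitude is slope $0$, which lies in the Li--Roberts interval and is a CTF slope for every non-trivial knot by \cite{Gabai3}---indeed the paper's generic case rests on precisely the longitudinal foliation; the slope Li--Roberts cannot supply is the meridian. What actually happens when $n=q=2$ is that the middle piece degenerates to $\mathbb{T}^2\times I$ (which carries product foliations at \emph{every} slope), so $\Sigma_2(C_{p,2}(K))\cong X_0\cup_f X_1$ with $f$ identifying the slope $\frac{a}{b}$ on $\partial X_0$ with $p-\frac{a}{b}$ on $\partial X_1$; the obstruction is that a CTF slope on one copy need not be glued to a CTF slope on the other, and for the right-handed trefoil with $p\geq 3$ no such matching exists (Theorem~\ref{thm:cablefail}), while for $p\leq 1$ excellence does hold. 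So it is not that the piece ``carries no suitable horizontal foliation''; it is that whether the slopes can be matched depends on $K$ and $p$, which is exactly why the theorem excludes $n=q=2$ rather than declaring it a failure.
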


It turns out that in the case of $n = q = 2$, we are still able to partially extend Theorem~\ref{thm:cables}.

\begin{theorem}\label{thm:cablefail}
For any non-trivial knot in $S^3$, we have that $\Sigma_2(C_{p,2}(K))$ is not an L-space.  Further, if $K$ is a torus knot or iterated torus knot, then $\Sigma_2(C_{p,2}(K))$ is excellent.   
\end{theorem}

\begin{remark}
In the published version of this article \cite{GLid} and previous arXiv version, it was claimed that $\Sigma_2(C_{p,2}(K))$ was {\em not} excellent if $K$ was the right-handed trefoil and $p > 1$.  The argument there was incorrect.  See Remark~\ref{rmk:correction} below for a more detailed explanation of the mistake.  It is still unknown to the authors if $\Sigma_2(C_{p,2}(K))$ is excellent for arbitrary $K$.  Any counterexample would also provide a counterexample to the conjectural equivalence of \eqref{equiv:lo}, \eqref{equiv:ctf}, and \eqref{equiv:lspace}.
\end{remark}

L-spaces seem to be rare among integer homology spheres: it is conjectured that the only irreducible integer homology sphere L-spaces are $S^3$ and the Poincar\'e homology sphere.  This, and the possibility that \eqref{equiv:lspace} implies \eqref{equiv:lo}, suggests that most integer homology spheres should have left-orderable fundamental group.  Now, if a knot has trivial Alexander polynomial, then all its cyclic branched coverings are integer homology spheres.  A well-known class of knots with trivial Alexander polynomial are the Whitehead doubles.  For the cyclic branched coverings of these knots, we show the following.  

\begin{theorem}\label{thm:whitehead}
Let $K$ be a non-trivial knot and let $Wh(K)$ denote the positive, untwisted Whitehead double of $K$.  Then, $\Sigma_2(Wh(K))$ is an excellent manifold and $\pi_1(\Sigma_n(Wh(K)))$ is left-orderable for all even $n$.  If $\pi_1(S^3_\alpha(K))$ is left-orderable for $\alpha = 1, \frac{1}{2},$ or $\frac{1}{3}$, then $\pi_1(\Sigma_n(Wh(K)))$ is left-orderable for all $n \geq 2$.  
\end{theorem}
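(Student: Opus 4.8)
The plan is to exploit the satellite structure of $Wh(K)$ together with the vanishing winding number of the Whitehead pattern. Let $X$ denote the exterior of $K$, let $V$ be the companion solid torus (so that $Wh(K)$ is the image of the Whitehead pattern $P\subset V$), and let $T=\partial V$. Because the winding number of $P$ is $0$, both the meridian and the longitude of $K$ have linking number $0$ with $Wh(K)$, so the branched covering $\Sigma_n(Wh(K))\to S^3$ restricts to the \emph{trivial} cover over $X$. Thus the preimage of $X$ is $n$ disjoint copies $X_1,\dots,X_n$ of the knot exterior, and
$$\Sigma_n(Wh(K))=\Big(\textstyle\bigsqcup_{i=1}^n X_i\Big)\cup \Sigma_n(V,P),$$
where the central piece $\Sigma_n(V,P)$ is the $n$-fold cyclic branched cover of $V$ over $P$. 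The first step is to identify this piece: realizing $V\setminus N(P)$ as the Whitehead link complement and then lifting and refilling the branch locus, one obtains an explicitly understood (Seifert fibered) manifold with $n$ boundary tori, to which the $X_i$ are attached, giving a star-shaped decomposition. For $n=2$ the central piece is a collar, so $\Sigma_2(Wh(K))=X_1\cup_\phi X_2$ is two copies of the knot exterior glued along their boundary; in all cases the result is a toroidal integer homology sphere, hence irreducible and distinct from $S^3$.

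For the taut foliation when $n=2$, I would place a Li--Roberts taut foliation \cite{LR} on each $X_i$. These realize, on $\partial X_i$, any boundary slope in the range recalled in Section~\ref{sec:background}, as a foliation by parallel curves. Choosing slopes $s_1,s_2$ in this range with $\phi(s_1)=s_2$ and matching co-orientations, the standard theorem for gluing taut foliations that agree on a torus assembles them into a co-orientable taut foliation on $X_1\cup_\phi X_2=\Sigma_2(Wh(K))$; the flexibility of the Li--Roberts construction ensures that a compatible pair of slopes exists. Irreducibility then makes this foliation taut, giving the foliation half of excellence for $n=2$.

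The principal difficulty is left-orderability. Each knot group $\pi_1(X_i)$ is left-orderable by \cite[Corollary 3.4]{BRW}, and $\pi_1(\Sigma_n(Wh(K)))$ is the fundamental group of a star-shaped graph of groups with central vertex $\pi_1(\Sigma_n(V,P))$, leaves $\pi_1(X_i)$, and peripheral $\mathbb{Z}^2$ edge groups. By the standard criterion for left-ordering such an amalgam, it suffices to left-order the central piece so that its restriction to each boundary torus is compatible with a left-ordering of the corresponding $\pi_1(X)$. A left-ordering of $\pi_1(T)=\mathbb{Z}^2$ is recorded by a slope and a sign; by the $\mathbb{Z}/n$-symmetry the central piece dictates a single rational slope $1/k$ on every boundary torus, and I expect the crux to be showing that $\pi_1(X)$ admits a left-ordering inducing that slope. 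This is precisely where Dehn fillings enter: the slope-$1/k$ ordering can be realized on $\pi_1(X)$ once $\pi_1(S^3_{1/k}(K))$ is left-orderable.

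It remains to determine which slopes occur. For even $n$ the two strands of the Whitehead clasp can be paired off by the symmetry of the construction, forcing the dictated slope to be one whose ordering extends over $\pi_1(X)$ unconditionally; this yields left-orderability for all even $n$ and, together with the previous paragraph, shows that $\Sigma_2(Wh(K))$ is excellent. For general $n$ the clasp instead dictates a nonzero slope, and since $Wh(K)$ contains a single clasp the slopes that can arise are bounded and reduce to $k\in\{1,2,3\}$. Hence left-orderability of $\pi_1(S^3_\alpha(K))$ for $\alpha=1,\tfrac{1}{2},\tfrac{1}{3}$ supplies the anchors needed for every $n\ge 2$. I expect the technical heart of the argument to be exactly this reduction: identifying the slope that the central piece imposes for each $n$, verifying that a single left-ordering of $\pi_1(\Sigma_n(V,P))$ restricts compatibly to all $n$ boundary tori, and confirming that the three surgery hypotheses account for all slopes that occur.
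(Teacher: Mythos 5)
Your starting decomposition is the correct one and matches the paper's: $\Sigma_n(Wh(K))$ is $n$ disjoint copies of the knot exterior $X$ glued to the $n$-fold cyclic branched cover of the pattern solid torus, which is precisely the exterior $Y_n$ of the $n$-component chain link $L_n \subset S^3$. But your identification of this central piece is wrong, and this breaks the $n=2$ argument. The two lifts of $C$ in the double branched cover link each other \emph{twice} (once for each lift of the clasp), so $L_2$ is the torus link $T_{-2,4}$, not the Hopf link; hence $Y_2$ is a Seifert fibered manifold with an exceptional fiber of multiplicity $2$, not a collar $\mathbb{T}^2 \times I$, and $\Sigma_2(Wh(K))$ is \emph{not} two knot exteriors glued along their boundaries. (That structure does occur, but for $2$-fold branched covers of $(p,2)$-cables, Section~\ref{sec:cables}, where the lifted link is the Hopf link.) Consequently, for $n=2$ one cannot just match Li--Roberts slopes on $X_1$ and $X_2$: one must also produce a co-orientable taut foliation on $Y_2$ meeting both boundary tori in the correct slopes, together with an LO filling of $Y_2$. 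This is exactly what Proposition~\ref{prop:negtorus} supplies in the paper --- the slope $\mu_i + n_i\lambda_i$ on $X_i$ is identified with the slope $(n_i-2)m_i + b_i$ on a component of $T_{-2,4}$, and for $n_1,n_2 \gg 0$ these form an excellent multislope --- after which Lemma~\ref{lem:excellentgluing} and Lemma~\ref{lem:1/n} assemble the pieces. (Your claim that the central piece is Seifert fibered is also false for $n\geq 3$: for example the $3$-chain link exterior is hyperbolic; the paper never needs this.) For even $n$, the paper's argument is simply that $\Sigma_{2n}(Wh(K))$ branched-covers the excellent manifold $\Sigma_2(Wh(K))$, so Theorem~\ref{thm:brw} applies; your appeal to ``pairing off the strands of the clasp by symmetry'' is not an argument.

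The conditional statement is the real gap. You assert that the central piece ``dictates'' a slope $1/k$ on each boundary torus and that the possibilities ``reduce to $k \in \{1,2,3\}$'' because there is a single clasp, but you give no mechanism for either claim, and the graph-of-groups criterion you invoke (left-orderings of the vertex groups compatible along $\mathbb{Z}^2$ edge groups) is not a theorem quotable at this level of generality --- the paper avoids it entirely. The paper's mechanism is concrete: there is a degree one map $X \to S^1 \times D^2$ restricting to a homeomorphism of boundaries that exchanges $\mu$ and $\lambda$. Collapsing every companion copy except $X_0$ gives a degree one map $\Sigma_n(Wh(K)) \to M$, where $M$ is $X_0$ glued to the solid torus $S^3 - int\, N(C_0)$, whose meridian $b_0 = 2m_0 + l_0$ is identified with $\mu_0 + 2\lambda_0$; hence $M \cong S^3_{\frac{1}{2}}(K)$, which handles $\alpha = \frac{1}{2}$. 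Collapsing all but two copies (for $n \geq 3$) gives a degree one map onto $Q = X_0 \cup_h X_1$, glued along the Hopf link exterior, and the explicit computation of $h_*$ (the matrix $\begin{pmatrix} -2 & 1 \\ -3 & 2\end{pmatrix}$) shows that the only slopes of the form $\frac{1}{k}$ carried to slopes of the same form are $1$ and $\frac{1}{3}$, both fixed by $h_*$; then Theorem~\ref{thm:jsjlo} orders $\pi_1(Q)$ and Theorem~\ref{thm:brw}, applied to the degree one maps, orders $\pi_1(\Sigma_n(Wh(K)))$. Without this degree-one-map step (or an equivalent), your reduction to the three slopes $1, \frac{1}{2}, \frac{1}{3}$ is an assertion of the conclusion rather than a proof.
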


In light of Theorems~\ref{thm:cables} and ~\ref{thm:whitehead}, we make the following conjecture.

\begin{conjecture}\label{conj:asymptotics}
Let $K$ be a prime, satellite knot.  For all $n \gg 0$, $\Sigma_n(K)$ is excellent.
\end{conjecture}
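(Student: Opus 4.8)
The natural approach is to push the torus-decomposition strategy that underlies Theorems~\ref{thm:cables} and~\ref{thm:whitehead} to an arbitrary pattern, and to argue that every obstruction to building a taut foliation and a left-ordering becomes surmountable once $n$ is large. Write $K = P(C)$ with non-trivial companion $C$ and pattern $P$ of winding number $w$ in a solid torus $V$, so that the exterior of $K$ splits as $X_C \cup_T X_P$ along the companion torus $T = \partial V$. Since $K$ is prime, the discussion following \cite{GL} in the introduction shows that the preimages of $T$ are incompressible in $\Sigma_n(K)$. The preimage of $X_C = S^3 \setminus V$ is an unbranched cover determined by $\mu_C \mapsto w \in \mathbb{Z}/n$; it therefore consists of $\gcd(n,w)$ copies of the $m$-fold cyclic cover of $X_C$, where $m = n/\gcd(n,w)$, each of which is the exterior of the lifted branch set in $\Sigma_m(C)$. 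The preimage of $V$ is the $n$-fold cover $\widetilde{V}$ of $(V,P)$ branched over $P$, and $\Sigma_n(K)$ is recovered by regluing $\widetilde{V}$ to these companion pieces along the incompressible tori.

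The companion pieces are the easy part. Because $w$ is fixed, $m = n/\gcd(n,w) \geq n/w \to \infty$, so for large $n$ each companion piece is a many-sheeted cyclic cover of the non-trivial knot exterior $X_C$. Left-orderability of their fundamental groups is automatic, since a subgroup of the (left-orderable) knot group $\pi_1(X_C)$ is again left-orderable. For taut foliations one applies the Li--Roberts theorem to $X_C$: it supplies taut foliations meeting $\partial X_C$ in a controlled range of slopes, and these pull back to the covers, where the family of realizable boundary slopes only grows with $m$.

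The pattern piece and the assembly are where the real work lies, and I expect the pattern piece to be the main obstacle. For the Seifert patterns of Theorem~\ref{thm:cables} and the doubling patterns of Theorem~\ref{thm:whitehead} the branched cover $\widetilde{V}$ is completely understood, but for a general pattern there is no such description, and one must show directly that $\widetilde{V}$ carries taut foliations with prescribed boundary behavior and left-orderable fundamental group for all large $n$. The hope is that, as $n \to \infty$, $\widetilde{V}$ becomes large enough for Li--Roberts-type existence results to apply; proving this uniformly over all patterns is the crux. Granting it, one would glue the taut foliations across the companion tori using a gluing theorem for taut foliations along incompressible tori, which succeeds precisely when the boundary slopes of the two sides can be matched --- and the wide slope ranges available for large $n$ are exactly what should force a common slope. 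The analogue for left-orderability is harder: one must amalgamate the left-orderable pieces across the $\mathbb{Z}^2$'s of the decomposition with orderings that agree on each boundary torus, and controlling this compatibility uniformly in $n$ is the most delicate point. That excellence genuinely fails for small $n$ --- as in Theorem~\ref{thm:cablefail} --- confirms that any such argument must be asymptotic, with ``large $n$'' converted into enough slack in both the foliation slopes and the boundary orderings.
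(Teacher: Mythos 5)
This statement is Conjecture~\ref{conj:asymptotics} of the paper: it is posed as an open problem, offered ``in light of'' Theorems~\ref{thm:cables} and~\ref{thm:whitehead}, and the paper contains no proof of it. So there is nothing to compare your attempt against; the only question is whether your proposal actually closes the conjecture, and it does not. You have correctly reconstructed the decomposition the paper's evidence suggests --- split $\Sigma_n(K)$ along the incompressible preimages of the companion torus into covers of the companion exterior and the branched cover $\widetilde{V}$ of the pattern solid torus, then try to match excellent multislopes and glue via the analogue of Lemma~\ref{lem:excellentgluing} --- but the step you flag as ``the crux'' and then grant is precisely the entire content of the conjecture. For a general pattern there is no known result producing co-orientable taut foliations on $\widetilde{V}$ with prescribed boundary multislopes, nor showing those multislopes are LO multislopes; the paper can only do this when $\widetilde{V}$ is Seifert fibered (cables, via Theorem~\ref{thm:brwcondition} and Lemma~\ref{lem:seifertboundary}) or explicitly identified (the Whitehead pattern, where $\widetilde{V}$ is the chain-link exterior handled by Proposition~\ref{prop:negtorus}). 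A sketch that assumes this step is a restatement of the conjecture, not a proof of it.

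Two further points in your outline are off. First, your claim that the companion pieces improve with $n$ because $m = n/\gcd(n,w) \geq n/w \to \infty$ fails exactly in the winding-number-zero case ($w=0$), which includes the Whitehead doubles of Theorem~\ref{thm:whitehead}: there $\gcd(n,0)=n$, the preimage of $X_C$ is $n$ copies of $X_C$ itself, and no covering-degree growth occurs --- all the asymptotic slack must come from the pattern side. Second, left-orderability of the fundamental groups of the \emph{pieces} is indeed automatic (knot groups are left-orderable), but that is not what is needed: the paper's gluing mechanism (Theorem~\ref{thm:jsjlo} and Lemma~\ref{lem:excellentgluing}, which proceed by quotients onto amalgamated products over $\mathbb{Z}$ and \cite{BG}, not by matching orderings on boundary tori) requires the chosen multislope to be an LO multislope on each piece, i.e.\ left-orderability of the \emph{filled} pieces, and establishing that for the filled pattern piece is again the unproved core of the problem.
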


The condition that $K$ be prime is necessary, as $\Sigma_n(K)$ is prime (in fact irreducible) if and only if $K$ is prime; this follows from the equivariant sphere theorem \cite{MSY} and the Smith conjecture \cite{BM}.  If $K$ has a summand which is the figure-eight knot, then no cyclic branched cover of $K$ will have left-orderable fundamental group \cite[Theorem 2(c)]{DPT2005}.  

As mentioned, there have been a number of results determining when cyclic branched covers of two-bridge knots have non-left-orderable fundamental group and/or are L-spaces \cite{DPT2005,Hu2013,Peters, Tran2013}.  Let $K_{[p_1,\ldots,p_m]}$ denote the two-bridge knot of the type $\frac{a}{b}$, where $[p_1,\ldots,p_m]$ is the continued fraction expansion for $\frac{a}{b}$.  We follow the convention that $\frac{a}{b} = p_1 + \frac{1}{p_2 + \frac{1}{\ddots + \frac{1}{p_m}}}$.  We give the first examples of cyclic branched covers of hyperbolic two-bridge knots which are excellent manifolds.    

\begin{theorem}\label{thm:twobridgesurgery}
Suppose $k,\ell \geq 1$ and $n$ divides $(2k+1)$ for $n > 1$.  Then $\Sigma_n (K_{[2(2k+1),2\ell+1]})$ is excellent.  
\end{theorem}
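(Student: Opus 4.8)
The plan is to begin by recognizing $K:=K_{[2(2k+1),2\ell+1]}$ as the double twist knot $J(2(2k+1),2\ell+1)$: a length-two continued fraction with an even entry followed by an odd one presents a two-bridge \emph{knot} built from two twist regions, one with $2(2k+1)$ crossings and one with $2\ell+1$. For $2k+1\geq 3$ and $\ell\geq 1$ this knot is hyperbolic, matching the statement. Note also that $n\mid(2k+1)$ forces $n$ to be odd, so $n\geq 3$; this is reassuring, since $\Sigma_2(K)$ is a lens space and hence is never excellent. Writing $2k+1=ns$, the guiding principle is that the $n$-fold branch symmetry of $S^3$ subdivides the $2(2k+1)$-twist region into $n$ equal blocks, so that $\Sigma_n(K)$ should admit an explicit surgery (or gluing) description that I can analyze with the tools of the paper.

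Concretely, I would encode the two twist regions by unknots $c_1,c_2$ in the complement of an unknot, with $K$ recovered by $(-1/(2k+1))$-surgery on $c_1$ (the $2(2k+1)$ crossings being $(2k+1)$ full twists) together with the appropriate surgery on $c_2$ encoding the $(2\ell+1)$ region. Each $c_i$ meets a spanning disk of $K$ in two points, and the crucial first technical step is to determine how the preimage of $c_1$ behaves in $\Sigma_n(K)$: because the surgery coefficient $(2k+1)$ is divisible by $n$, this preimage and its induced surgery should simplify, exhibiting $\Sigma_n(K)$ as a Dehn filling of the exterior $X(\kappa)$ of a simpler knot $\kappa$ coming from the surviving $(2\ell+1)$-region, with filling slope governed by $s=(2k+1)/n$. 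Pinning down $\kappa$ and this slope precisely from the continued-fraction data is exactly where the divisibility hypothesis does its work.

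Given such a description, I would obtain the co-orientable taut foliation from the foliations of Li--Roberts \cite{LR} on knot exteriors, which realize an entire interval of boundary slopes. The point is to verify that the filling slope determined by $s$ lies in the allowed range, so that a Li--Roberts foliation on $X(\kappa)$ caps off to a co-orientable taut foliation on $\Sigma_n(K)$; as noted in the introduction these foliations meet the Kazez--Roberts regularity hypothesis \cite{KR}, so the resulting manifold is genuinely not an L-space. If instead the surgery description produces an essential torus, I would take the Seifert piece to be excellent by the known equivalence in the Seifert case and glue the foliations across the JSJ torus.

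The main obstacle is left-orderability. Unlike the integer-homology-sphere situation, it is \emph{not} automatic from the taut foliation, since $\Sigma_n(K)$ is only a rational homology sphere. Here I would use $n\mid(2k+1)$ a second time, to arrange that an $\mathbb{R}$-action witnessing left-orderability is equivariant under the deck transformations and therefore descends to $\Sigma_n(K)$ --- or, in the toroidal case, to amalgamate left-orders across the JSJ torus using the gluing technology of Boyer--Clay \cite{BC}, seeded by the orderable Seifert piece. The hardest part, I expect, is the simultaneous control of a single filling slope for which the Li--Roberts foliation extends \emph{and} the left-order persists; producing the order compatibly with the filling, rather than constructing the foliation, will be the delicate step.
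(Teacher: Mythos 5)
Your opening move---untwisting the $2(2k+1)$-region by surgery on an unknot and lifting that surgery description to the branched cover, using $n \mid (2k+1)$ to control the lifted slope---is exactly the paper's first step (Lemma~\ref{lem:pretzelsurgery}): there, $\frac{1}{2k+1}$-surgery on a circle $C$ turns an unknot $J$ into $\pm K_{[2(2k+1),2\ell+1]}$, the $n$-fold branched cover over $J$ is again $S^3$, the lift $\widetilde C$ of $C$ is connected (as $\operatorname{lk}(C,J)=\pm 2$ and $n$ is odd), and the lifted slope is $\frac{1}{d}$ with $d=(2k+1)/n$. But your plan leaves the lifted knot $\kappa$ unidentified, and this is not a detail you can defer: the paper shows $\widetilde C$ is the $n$-stranded pretzel $P(2\ell+1,\ldots,2\ell+1)$, a \emph{negative alternating} knot, and this is precisely what makes the foliation step work. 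Your proposal to check that the filling slope lies in the Li--Roberts interval of Theorem~\ref{thm:liroberts} fails: that interval $(-a,b)$ is not explicit, and the required slope is $\frac{1}{d}$, which equals $1$ when $n=2k+1$; even the conjectural universal interval $(-1,1)$ of Conjecture~\ref{conj:lr} does not contain it. The paper instead invokes Roberts' theorem for alternating knots \cite[Theorem 0.2]{Roberts1995}, which produces taut foliations on $S^3_r(P)$ for \emph{every} rational $r\geq 0$, covering $r=\frac{1}{d}$ for all $d\geq 1$. Without recognizing $\kappa$ as alternating, you have no tool that reaches this slope.

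The second gap is your treatment of left-orderability, which rests on a false premise. You assert that $\Sigma_n(K)$ is ``only a rational homology sphere,'' but it is an integer homology sphere: this follows from \cite[Corollary 11.2(b)]{Minkus1982} together with the surjectivity of $\pi_1(\Sigma_{2k+1}(K))\to\pi_1(\Sigma_n(K))$ (Lemma~\ref{lem:pi1surjective}), or immediately from the surgery description, since $\frac{1}{d}$-surgery on a knot in $S^3$ always yields a $\mathbb{Z}$-homology sphere. Consequently the step you flag as hardest is in fact automatic: an integer homology sphere admitting a taut foliation has left-orderable fundamental group by Lemma~\ref{lem:toroidalzs3}, and the manifold is excellent. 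Your proposed substitute---making an $\mathbb{R}$-action equivariant under the deck group so that it ``descends'' to $\Sigma_n(K)$---is not an argument as stated: left-orders and $\operatorname{Homeo}_+(\mathbb{R})$-actions pass to covers (subgroups), not to branched quotients, and $\Sigma_n(K)$ is the cover here, not the quotient; nor is there a JSJ torus in this picture to feed into the gluing machinery of \cite{BC}, since the relevant manifold is surgery on a hyperbolic-or-alternating pretzel knot rather than a graph manifold.
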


Our next result gives a family of cyclic branched covers of two-bridge knots which are total L-spaces.  If $a_i \geq 1$, $1 \leq i \leq m$, then for any $n \geq 2$, the $n$-fold cyclic branched cover of the two-bridge knot $K_{[2a_1,2a_2,\ldots,2a_m]}$ is the two-fold branched cover of an alternating knot or link \cite{MV} and is therefore a total L-space; see \cite{T14}, also \cite{DPT2005,Peters}.  The simplest case where this positivity condition does not hold is the family of two-bridge knots $K_{[2\ell,-2k]}$, $\ell, k \geq 1$.  For these, it is known that $\Sigma_3(K_{[2\ell,-2k]})$ is an L-space \cite{Peters} and has non-left-orderable fundamental group \cite{DPT2005} and is therefore a total L-space.  Recently, Teragaito has shown that $\Sigma_4(K_{[2\ell,-2k]})$ is an L-space \cite{T14}.  We show that $\pi_1(\Sigma_4(K_{[2\ell,-2k]}))$ is not left-orderable, so we have the following.      

\begin{theorem}\label{thm:twobridge}
For $k, \ell \geq 1$, $\Sigma_4(K_{[2\ell,-2k]})$ is a total L-space.  
\end{theorem}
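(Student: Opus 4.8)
The plan is to separate the claim into its two constituent conditions. Teragaito has shown that $Y := \Sigma_4(K_{[2\ell,-2k]})$ is an L-space \cite{T14}, so condition \eqref{equiv:lspace} already fails; since $Y$ is then a rational homology sphere L-space, it carries no co-orientable taut foliation \cite{OSGenus}, so \eqref{equiv:ctf} fails as well. Hence $Y$ is a total L-space as soon as \eqref{equiv:lo} fails, and the entire task reduces to showing that $G := \pi_1(Y)$ is not left-orderable.

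My first step is to fix an explicit finite presentation of $G$. The knot $K_{[2\ell,-2k]}$ is the two-bridge knot $\mathfrak b(4k\ell-1,2k)$, whose group has the usual two-generator presentation $\langle a, b \mid a w = w b\rangle$ in meridians $a,b$, where $w$ is the standard two-bridge relator word whose exponents $\ep_i = (-1)^{\lfloor 2ki/(4k\ell-1)\rfloor}$ fall into alternating syllable blocks governed by the continued-fraction data $[2\ell,-2k]$. Taking the $4$-fold cyclic cover of the knot exterior introduces four meridional lifts $a_0,a_1,a_2,a_3$ (permuted cyclically by the deck group $\mathbb Z/4$) together with the four lifts of the relator $aw = wb$; Dehn filling to form the branched cover imposes one further relation, of the form $a_0 a_1 a_2 a_3 = 1$, coming from the lifted meridian disk of the branch locus. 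After eliminating the redundant generators this yields a presentation of $G$ whose relations are explicit, if long, words in the $a_i$ determined by the sign pattern of $w$.

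With this in hand I would argue by contradiction. Suppose $G$ admits a left-invariant order $<$, with positive cone $P$; replacing $<$ by its reverse if needed and using the cyclic symmetry $a_i \mapsto a_{i+1}$, I would normalize the comparisons among the generators. The clean potential endgame is the branched-filling relation $a_0 a_1 a_2 a_3 = 1$: if a left-order were forced to assign the \emph{same} sign to all four generators, then this relation would express $1$ as a product of four elements of $P$ (or of $P^{-1}$), which is impossible since $P$ is a subsemigroup not containing $1$. The substance of the argument is therefore to show that the two-bridge relators do force all four $a_i$ to lie on the same side of $1$. I would extract this from a combinatorial ``block lemma'' describing how any left-order must compare the generators entering one $2\ell$- or $2k$-syllable of $w$, and then propagate these local comparisons through the relator.

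The main obstacle is exactly this sign-forcing step, and in particular making it uniform in $k$ and $\ell$: the relator $w$ has length on the order of $4k\ell$, so the relations of $G$ grow with the parameters and their detailed form changes from case to case. The key technical work is to phrase the block lemma purely in terms of the sign pattern $\ep_i$ rather than the individual values of $k,\ell$, and to run an induction along the syllables of $w$ so that a single chain of inequalities closes up to a contradiction for every $k,\ell \geq 1$ simultaneously. I expect that controlling the interaction between the two different syllable lengths, and checking that no assignment of signs to the $a_i$ escapes the resulting constraints, is where the bulk of the effort lies.
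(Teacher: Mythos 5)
Your reduction is set up correctly: the L-space statement is Teragaito's \cite{T14}, an L-space admits no co-orientable taut foliation \cite{OSGenus}, and the whole content is non-left-orderability of $\pi_1(\Sigma_4(K_{[2\ell,-2k]}))$. Your framework for that part — lift a two-bridge presentation to the $4$-fold cover, adjoin the branching relation $a_0a_1a_2a_3=1$, assume a left order, and exploit the fact that each generator occurs in each relator with exponents of only one sign — is also the paper's framework (the paper takes the presentation from \cite{DPT2005} after rewriting $[2\ell,-2k]=[2\ell-1,1,2k-1]$, which is where the one-sign-per-generator property comes from). The gap is in your endgame. You propose to show that the relators force all four meridional generators to the same side of $1$, and to extract this from a ``block lemma'' phrased \emph{purely in terms of the sign pattern} of the relator word, propagated along syllables. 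This cannot work: sign-level information is provably insufficient. The sign constraints coming from relators of this type say only that certain triples of signs are forbidden (a relator all of whose factors are positive, or all negative, cannot equal $1$). One checks directly that the assignment $a,b>1$, $c,d<1$ (and its cyclic rotations) satisfies \emph{every} such constraint, including the one from $abcd=1$ in \eqref{eqn:rel0} — indeed the paper's proof explicitly reduces to exactly this residual configuration. So no propagation of sign data alone can force all four generators to one side; the configuration you must exclude survives all sign constraints, and moreover the all-same-sign conclusion you are aiming for is itself flatly inconsistent with \eqref{eqn:rel0}, so it could only ever be reached after a contradiction has already been found by other means.

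What is actually needed, and what the paper does, is a genuinely multiplicative argument in the residual case: having normalized (via the cyclic automorphism) to $a,b>1$, $c,d<1$, one derives explicit inequalities from the full relator words and the order axioms, not just signs — $a^{-k}b^k<1$ from \eqref{eqn:rel1} (since the remaining factors there are positive, a power of $a^{-k}b^k$ must be negative), then $a^kb^{-k}<1$ from a rewriting of \eqref{eqn:rel4}, then $da>1$ (by ruling out $da<1$ using another rewriting of \eqref{eqn:rel4}), then $bc<1$ from conjugating $abcd=1$, and finally a rewriting of \eqref{eqn:rel1} expresses $1$ as a product of strictly positive elements, the contradiction. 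Note also that the branching relation plays the role of an \emph{ingredient} (it rules out the all-same-sign pattern and converts $da>1$ into $bc<1$), not the final target. To repair your proposal you would need to replace the sign-only block lemma with an inequality chain of this kind; as written, the crux of your argument is delegated to a step that cannot succeed.
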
 

Finally, Teragaito \cite{Ter} has also shown that the three-fold cyclic branched cover of a three-strand pretzel knot of the form $P (2k+1, 2\ell+1, 2m+1)$, where $k,\ell,m \geq 1$, is an $L$-space.  We show that its fundamental group is not left-orderable.  We thus establish the following.

\begin{theorem}\label{thm:not left}
For $k, \ell, m \geq 1$, $\Sigma_3(P(2k+1,2\ell+1,2m+1))$ is a total L-space.  
\end{theorem}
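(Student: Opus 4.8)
The plan is to reduce the statement to a single assertion about the fundamental group. Write $Y = \Sigma_3(P(2k+1,2\ell+1,2m+1))$. By Teragaito's theorem \cite{Ter}, $Y$ is an L-space, so condition \eqref{equiv:lspace} fails; and since an L-space admits no co-orientable taut foliation (the one unconditional implication recalled in the introduction), condition \eqref{equiv:ctf} fails as well. Hence, to show that $Y$ is a total L-space, it suffices to prove that $\pi_1(Y)$ is not left-orderable, i.e. that condition \eqref{equiv:lo} fails.

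First I would produce an explicit finite presentation of $G = \pi_1(Y)$ that is uniform in the parameters $k, \ell, m$. Starting from a presentation of the knot group $\pi_1(S^3 \setminus P(2k+1,2\ell+1,2m+1))$, one applies Reidemeister--Schreier rewriting to the index-three subgroup $\ker(\pi_1 \to \mathbb{Z}/3)$ (where each meridian maps to a generator of $\mathbb{Z}/3$) and then adjoins the single relation that fills in the branch locus. Because the knot has genus one, the surface-complement description lets one arrange such a presentation on a \emph{bounded} number of generators, with the twisting parameters appearing only as exponents $2k+1, 2\ell+1, 2m+1$ in the relators. Crucially, this presentation carries the order-three deck transformation $\tau$ as an automorphism that cyclically permutes three symmetric blocks of generators.

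Now suppose for contradiction that $G$ admits a left-invariant order with positive cone $\mathcal{P}$, so $G = \mathcal{P} \sqcup \{1\} \sqcup \mathcal{P}^{-1}$ and $\mathcal{P} \cdot \mathcal{P} \subseteq \mathcal{P}$. Each generator then lies in $\mathcal{P}$ or in $\mathcal{P}^{-1}$. Two symmetries cut down the sign patterns that must be examined: replacing $\mathcal{P}$ by $\mathcal{P}^{-1}$ simultaneously flips all signs, and replacing $\mathcal{P}$ by $\tau(\mathcal{P})$ cyclically permutes the three blocks. The key observation making the analysis independent of the magnitudes of $k, \ell, m$ is that in any left-order a positive power $x^{j}$ ($j \geq 1$) has the same sign as $x$; since every twisting exponent is positive, each twist region contributes a factor whose sign is determined by a single generator, regardless of whether the exponent is $3, 5, 7, \dots$. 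Feeding the resulting sign data through the defining relators, one finds in each surviving case an element that is forced to be simultaneously greater than and less than $1$, contradicting the existence of $\mathcal{P}$.

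I expect the main obstacle to be precisely this final combinatorial step. Between the controlled powers coming from the twist regions there are interspersed generators, so reducing a relator $w=1$ to a contradiction requires tracking the signs of the intermediate syllables and not merely of the twist factors; the argument must moreover be organized so that it closes uniformly over all $k,\ell,m \geq 1$ rather than case by case. Here the $\mathbb{Z}/3$-symmetry is what keeps the bookkeeping finite: it lets one analyze a single block together with its interaction with one cyclic neighbor, and then propagate the conclusion around the three blocks to reach the contradiction. Combining this with Teragaito's L-space result then yields that $Y$ is a total L-space.
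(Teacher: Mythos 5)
Your reduction is sound and matches the paper exactly: Teragaito's theorem gives the L-space property, the Ozsv\'ath--Szab\'o implication then rules out co-orientable taut foliations, so the whole content of the theorem is that $\pi_1(\Sigma_3(K))$ is not left-orderable. Your setup also mirrors the paper's: the paper builds a presentation of $\pi_1(\Sigma_3(K))$ from a two-relator presentation of the pretzel knot group (due to Trotter) by lifting the presentation $2$-complex to the $3$-fold cyclic cover and adjoining the branching relations $x_0x_1x_2 = y_0y_1y_2 = z_0z_1z_2 = 1$, and it exploits the $\mathbb{Z}/3$ deck transformation and the coherence of exponent signs exactly as you propose. The genuine gap is that your argument stops at the decisive point: the assertion that ``feeding the resulting sign data through the defining relators'' produces a contradiction in every surviving case is not proved, and you yourself flag it as the main obstacle. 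That combinatorial step \emph{is} the theorem. There is no a priori reason such bookkeeping must close up --- for many knots the analogous cyclic branched covers \emph{are} left-orderable (e.g.\ high cyclic covers of $5_2$), so success depends entirely on the precise shape of the relators, which your proposal never pins down.

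Two concrete ingredients are missing. First, before any sign analysis one must show that no generator is trivial in the quotient: the paper argues that if some $x_i = 1$, the $\mathbb{Z}/3$ symmetry forces all $x_i = 1$, whence \eqref{eqn:pretz2new} gives $y_i^{k+1} = y_{i+1}^{k}$ for all $i \in \mathbb{Z}/3$, so each $y_i$ has finite order, making the whole group trivial and contradicting the Smith conjecture. Your proposal omits this entirely. Second, the paper's contradiction is not of the form ``a single element is both greater than and less than $1$''; it is a parity argument on global sign patterns. After the branching relations kill the $z_i$, each of the six generators $x_i, y_i$ occurs in each relator with exponents of one sign only, and the resulting sign tables show: whenever $x_i$ and $x_{i+1}$ have opposite signs, $y_i$ and $y_{i+1}$ must have the same sign (one of the two relators for that $i$, or its inverse, would otherwise be a product of strictly positive elements). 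Since $x_0x_1x_2 = 1$ with all $x_i$ nontrivial, the $x_i$ cannot all share a sign, so at least two of the three cyclic pairs $(x_i, x_{i+1})$ have opposite signs; this forces all three $y_i$ to share a sign, which is impossible because $y_0y_1y_2 = 1$. Without exhibiting this mechanism (or a substitute for it), the proposal is a plan for a proof rather than a proof.
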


With this, we can improve \cite[Corollary 1.2]{Ter} to the following.  
\begin{corollary}
Let $K$ be a genus one, alternating knot.  Then $\Sigma_3(K)$ is a total L-space.
\end{corollary}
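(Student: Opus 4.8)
The plan is to reduce the corollary to cases already settled in the paper and in the literature, by feeding a classification of genus one alternating knots into Theorem~\ref{thm:not left} and the two-bridge results quoted in the introduction. The observation that legitimizes working up to mirror image is that being a total L-space is a mirror-invariant property of the knot: since $\Sigma_3(\overline{K}) = -\Sigma_3(K)$, and reversing orientation changes neither $\pi_1$ nor the rank of $\HFhat$ nor $|H_1|$, the cover $\Sigma_3(\overline{K})$ is a total L-space if and only if $\Sigma_3(K)$ is. So it suffices to treat one representative of each mirror pair.

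I would then appeal to the classification that, up to mirror image, a genus one alternating knot is one of: (a) a two-bridge knot $K_{[2a_1,2a_2]}$ with $a_1,a_2\ge 1$; (b) a two-bridge knot $K_{[2\ell,-2k]}$ with $\ell,k\ge 1$; or (c) a pretzel knot $P(2k+1,2\ell+1,2m+1)$ with $k,\ell,m\ge 1$. Cases (a) and (b) are exactly the two sign patterns, up to mirroring, of the even length-two continued fractions describing genus one two-bridge knots, while (c) consists of the genuine (non-two-bridge) three-pretzels, whose alternating diagram forces the three odd parameters to share a sign. With this dichotomy the three cases are immediate. In case (a), $\Sigma_3(K)$ is the double branched cover of an alternating link \cite{MV}, hence an L-space \cite{OSBranched} with non-left-orderable fundamental group \cite{BGW}, so it is a total L-space. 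In case (b), $\Sigma_3(K)$ is an L-space by \cite{Peters} and has non-left-orderable fundamental group by \cite{DPT2005}, so again it is a total L-space; note this already subsumes the trefoil $T_{2,3}=K_{[2,-2]}$. In case (c), $\Sigma_3(K)$ is a total L-space directly by Theorem~\ref{thm:not left}.

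The one substantive step, and the \textbf{main obstacle}, is establishing the classification, i.e. that these three families exhaust the genus one alternating knots. I would argue that such a knot is prime (a connected sum of genus one summands has genus at least one, with equality only if a summand is trivial) and then analyze a reduced alternating diagram: by Crowell--Murasugi it realizes the genus, so its Seifert surface is a once-punctured torus, which I would put in plumbing (two-band) normal form to read off either a double twist, yielding the two-bridge families (a)/(b), or a three-band pretzel picture, yielding (c). The delicate points are ruling out the mixed-sign non-two-bridge pretzels (which are not alternating) and confirming no further families arise. Alternatively, one may simply cite the classification underlying \cite[Corollary~1.2]{Ter}, which already isolates the pretzel family $P(2k+1,2\ell+1,2m+1)$, and combine it with the two-bridge analysis above; the present improvement over \cite{Ter} then consists precisely in supplying the non-left-orderability in case (c) via Theorem~\ref{thm:not left}.
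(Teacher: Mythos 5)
Your proposal is correct and takes essentially the same route as the paper: the paper also reduces to the classification of genus one alternating knots (genus one two-bridge knots or, up to mirroring, pretzels $P(2k+1,2\ell+1,2m+1)$ with $k,\ell,m\geq 1$, citing \cite[Lemma 3.1]{BZ} and \cite{Patton}, which is the classification underlying \cite[Corollary 1.2]{Ter}), then settles the two-bridge case by the known results (\cite{DPT2005}, \cite{Peters}, \cite{MV}, \cite{OSBranched}) and the pretzel case by Theorem~\ref{thm:not left}. Your finer splitting of the two-bridge family into the positive and mixed-sign subcases and your explicit mirror-invariance justification are just expanded versions of steps the paper leaves implicit, so there is no substantive difference.
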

\begin{proof}
We repeat the argument of \cite[Corollary 1.2]{Ter}, where it is shown that $\Sigma_3(K)$ is an L-space.  We are interested in showing $\pi_1(\Sigma_3(K))$ is not left-orderable.  If $K$ is a genus one, alternating knot, then $K$ is either a genus one, two-bridge bridge knot or, up to mirroring, a pretzel knot $P(2k+1,2\ell+1,2m+1)$ with $k,\ell,m \geq 1$ \cite[Lemma 3.1]{BZ} (independently \cite{Patton}).  The former case follows from \cite{DPT2005}, while the latter is Theorem~\ref{thm:not left}.     
\end{proof}

\noindent {\bf Organization:} In Section~\ref{sec:background}, we review the relevant definitions and results that we will invoke throughout the paper.  In Section~\ref{sec:torus} we prove Theorem~\ref{thm:torus}.  In Section~\ref{sec:cables} we prove Theorems~\ref{thm:cables} and ~\ref{thm:cablefail}.  In Sections~\ref{sec:whitehead}, ~\ref{sec:twobridgesurgery}, ~\ref{sec:twobridge}, and ~\ref{sec:pretzels} we prove Theorems~\ref{thm:whitehead}, ~\ref{thm:twobridgesurgery}, ~\ref{thm:twobridge}, and ~\ref{thm:not left} respectively.  Finally, we briefly discuss two other families of cyclic branched covers in Section~\ref{sec:misc}. \\

\noindent {\bf Acknowledgments:}  We would like to thank Steve Boyer, Nathan Dunfield, Adam Levine, and Rachel Roberts for helpful discussions.  We are particularly grateful to Nathan Dunfield for providing the information described in Remark~\ref{rmk:nathan}\eqref{item:nathan}.  The first author was partially supported by NSF Grant DMS-1309021.  The second author was partially supported by NSF Grant DMS-0636643.

\section{Background}\label{sec:background}

In this section, we will collect results from the literature and other technical lemmas which will be used throughout the paper.  All three-manifolds are assumed to be connected, compact, and orientable unless specified otherwise.  For a knot $K$ in $S^3$, we use the notation $-K$ to denote the mirror of $K$.

\subsection{Taut foliations and left-orders}\label{sec:tautlo}
We begin with the relevant background on taut foliations and left-orders.  We will discuss these simultaneously so we can draw numerous parallels.  

A {\em taut foliation} $\mathcal{F}$ on a three-manifold is a foliation by surfaces such that for each leaf $F \in \mathcal{F}$, there exists a curve $\gamma$ which is transverse to $\mathcal{F}$ and intersects the leaf $F$.  
Manifolds with taut foliations are prime and have infinite fundamental group (see, for instance, \cite{Calegari2007}).  

A group $G$ is {\em left-orderable} if there exists a left-invariant, strict total order on $G$.  We use the convention that the trivial group is {\em not} left-orderable.  Examples of left-orderable groups include $\mathbb{Z}$, braid groups \cite{Dehornoy}, and $\operatorname{Homeo}_+(\Rr)$, while any group with torsion (e.g. a finite group) is not left-orderable.  One reason why the orderability of three-manifold groups is particularly well-suited for study is the following: 

\begin{theorem}[Boyer-Rolfsen-Wiest, Theorem 1.1 in \cite{BRW}]\label{thm:brw}
Let $Y$ be a compact, connected, irreducible, $P^2$-irreducible three-manifold.  Then, if there exists a non-zero homomorphism from $\pi_1(Y)$ to a left-orderable group, then $\pi_1(Y)$ is left-orderable.  In particular, if there exists a non-zero degree map from $Y$ to $Y'$ and $\pi_1(Y')$ is left-orderable, then so is $\pi_1(Y)$.  
\end{theorem}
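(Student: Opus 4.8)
The plan is to deduce the statement from the local orderability criterion of Burns and Hale, which asserts that a group $G$ is left-orderable if and only if every nontrivial finitely generated subgroup of $G$ admits a surjection onto a nontrivial left-orderable group. First I would dispose of the trivial direction (take the identity $\pi_1(Y)\to\pi_1(Y)$), so assume we are given a homomorphism $\phi\colon\pi_1(Y)\to L$ with $L$ left-orderable and nontrivial image. Since every subgroup of a left-orderable group is left-orderable, I may replace $L$ by $\phi(\pi_1(Y))$ and assume $\phi$ is surjective onto a nontrivial left-orderable group $Q$; note $Q$ is torsion-free and hence infinite, as any left-orderable group is torsion-free. To verify the Burns--Hale criterion, let $H\le\pi_1(Y)$ be a nontrivial finitely generated subgroup. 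If $\phi(H)\neq 1$, then $H\twoheadrightarrow\phi(H)\le Q$ exhibits $H$ surjecting onto a nontrivial left-orderable group and there is nothing more to do.

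The content is therefore the case $H\le\ker\phi$, which I would handle by covering space theory. We may assume $\pi_1(Y)$ is infinite, since a finite nontrivial group admits no nontrivial homomorphism to a (torsion-free) left-orderable group and the hypothesis is then vacuous; hence by the Sphere Theorem $\pi_2(Y)=0$. Let $p\colon Y_H\to Y$ be the cover with $p_*\pi_1(Y_H)=H$. Because $[\pi_1(Y):\ker\phi]=|Q|=\infty$, the subgroup $H$ has infinite index, so $Y_H$ is noncompact; moreover $\pi_2(Y_H)=\pi_2(Y)=0$, so $Y_H$ is irreducible and $P^2$-irreducible. By Scott's compact core theorem, $H\cong\pi_1(M)$ for a compact core $M\subset Y_H$. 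Since $Y_H$ is noncompact, $\partial M\neq\emptyset$; and using irreducibility each $2$-sphere boundary component bounds a ball and can be absorbed into the core, a process that (because $H\neq 1$ and $Y_H$ is noncompact) terminates with $\partial M$ nonempty and containing no spheres. Then $H_1(\partial M;\mathbb{Q})\neq 0$, so the half-lives--half-dies principle forces $b_1(M)>0$, whence $H\cong\pi_1(M)$ surjects onto $\mathbb{Z}$, a nontrivial left-orderable group.

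Having verified the Burns--Hale criterion in both cases, I conclude that $\pi_1(Y)$ is left-orderable. The \emph{main obstacle} is precisely the second case: subgroups contained in $\ker\phi$ receive no information from $\phi$, and one must produce a left-orderable quotient by other means. The crucial point is that such subgroups have infinite index, so the associated cover is noncompact and its compact core necessarily acquires boundary of positive genus; this is where irreducibility and $P^2$-irreducibility are genuinely used, and it is the heart of the argument. Finally, the ``in particular'' clause is immediate: a nonzero degree map $f\colon Y\to Y'$ induces $f_*\colon\pi_1(Y)\to\pi_1(Y')$ whose image has finite index in $\pi_1(Y')$, hence is nontrivial since $\pi_1(Y')$ is infinite; as $\pi_1(Y')$ is left-orderable, the main statement applies and $\pi_1(Y)$ is left-orderable.
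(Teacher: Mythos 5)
Your argument is correct, but note that this paper does not prove Theorem~\ref{thm:brw} at all: it is quoted from Boyer--Rolfsen--Wiest [BRW] and used as a black box. What you have reconstructed is, in essence, the original proof from [BRW] itself: the Burns--Hale criterion, the dichotomy according to whether the finitely generated subgroup $H$ survives under $\phi$, and in the case $H\le\ker\phi$ the passage to the infinite-index (hence noncompact) cover, Scott's compact core, and positivity of $b_1$ of the core. Two small points deserve attention. First, your step ``$\partial M$ nonempty and containing no spheres, hence $H_1(\partial M;\mathbb{Q})\neq 0$'' is not quite complete in the non-orientable setting the theorem allows: a projective-plane boundary component has $H_1(P^2;\mathbb{Q})=0$, so you must also exclude $P^2$ components, which follows because boundary components of a compact core are two-sided in $Y_H$ and $Y_H$ is $P^2$-irreducible (alternatively, once no component of $\partial M$ is $S^2$ or $P^2$ one gets $b_1(M)\geq 1$ directly from $\chi(M)=\chi(\partial M)/2\leq 0$, which avoids half-lives--half-dies and works without orientability). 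Second, the assertion that $\pi_2(Y_H)=0$ forces $Y_H$ to be irreducible and $P^2$-irreducible is true but not formal; the clean citation is the equivariant sphere theorem of Meeks--Simon--Yau (covers of $P^2$-irreducible manifolds are $P^2$-irreducible), which is also what [BRW] rely on. With those standard references supplied, your proof is complete and coincides with the one in the cited source.
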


Note that for closed, orientable manifolds other than $\mathbb{R}P^3$, irreducibility implies $P^2$-irreducibility.  Since there are no non-trivial homomorphisms from $\pi_1(\mathbb{R}P^3)$ to a left-orderable group, this case will not be a concern.  Further, observe that $\pi_1(S^2 \times S^1)$ is left-orderable, so we may further replace irreducible with prime.  Theorem~\ref{thm:brw} implies that a prime three-manifold with $b_1 > 0$ will have left-orderable fundamental group.  Similarly, Gabai showed that a prime three-manifold with $b_1 > 0$ always has a co-orientable taut foliation \cite{Gabai1}.  Recall that we say a closed three-manifold is {\em excellent} if it admits a co-orientable taut foliation and has left-orderable fundamental group.  Any prime three-manifold with $b_1 > 0$ is thus automatically excellent.  

\begin{remark}
As we will repeatedly invoke Theorem~\ref{thm:brw}, we want to ensure that the cyclic branched covers we work with are prime.  All knots that we will take cyclic branched covers of in this paper (torus knots, cables, Whitehead doubles, two-bridge knots, and pretzel knots) will be prime.  Therefore, all such cyclic branched covers will be prime; we will not mention this point again.
\end{remark}  

We also would like notions of left-orders and taut foliations with appropriate boundary conditions.  We will only consider rational slopes on boundary tori in this paper.  

\begin{definition}
Let $M$ be a compact three-manifold with boundary a disjoint union of tori, $T_1,\ldots,T_n$, for some $n \geq 1$.  Let $\alpha_i$ be a slope on $T_i$ for each $1 \leq i \leq n$.  The multislope $\bfalpha = (\alpha_1,\ldots,\alpha_n)$ is called a {\em CTF multislope} if $M$ has a co-orientable taut foliation which meets $T_i$ transversely in circles of slope $\alpha_i$ for $1 \leq i \leq n$.  If $M$ is the exterior of a knot $K$ in $S^3$, we say that $\alpha$ is a {\em CTF slope for $K$} if $\alpha$ is a CTF slope for $M$.  
\end{definition}

Observe that if $\bfalpha$ is a CTF multislope on $M$, the manifold $M(\bfalpha)$ admits a co-orientable taut foliation.  Recently, Li and Roberts have shown that sufficiently small slopes on knots in $S^3$ are always CTF slopes.  

\begin{theorem}[Li-Roberts, Theorem 1.1 in \cite{LR}]\label{thm:liroberts}
Let $K$ be a non-trivial knot in $S^3$.  Then, there exists an interval $(-a,b)$ with $a, b > 0$ such that if $\alpha \in (-a,b)$ then $\alpha$ is a CTF slope for $K$.     
\end{theorem}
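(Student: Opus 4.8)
The plan is to construct, for each slope $\alpha$ in a suitable interval around the longitudinal slope $0$, a co-orientable taut foliation of the exterior $M$ of $K$ meeting $\partial M$ transversely in curves of slope $\alpha$, and to organize these foliations so that the set of realized slopes is manifestly open. The natural starting point is Gabai's sutured manifold theory \cite{Gabai1}: since $K$ is non-trivial, a minimal-genus Seifert surface $S$ is norm-minimizing, and the associated sutured manifold decomposition yields a co-orientable taut foliation $\mathcal{F}_0$ of $M$ in which $\partial M$ is a single toral leaf, with nearby leaves spiralling onto it along the longitude $\partial S$. Thus the longitudinal slope $0$ is, after a standard boundary modification, a CTF slope; the content of the theorem is that an entire interval of slopes is realized.

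First I would recast the output of the sutured manifold hierarchy as a branched surface $B \subset M$ carrying $\mathcal{F}_0$, and verify that $B$ is laminar (no sink disks, no monogons, no disks of contact, appropriate holonomy conditions) so that every lamination fully carried by $B$ fills in to a taut foliation \cite{Calegari2007}. The advantage of working with $B$ rather than with a single foliation is that the transverse measures, and hence the boundary slopes, carried by $B$ vary over a convex set: the boundary train track $\partial B = B \cap \partial M$ on the torus carries a closed interval of slopes in its projectivized measure space. Realizing a whole interval of boundary slopes is then reduced to showing that $\partial B$ genuinely branches rather than being a single simple closed curve.

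The crucial step, and the one I expect to be the main obstacle, is this boundary control. I must arrange the hierarchy and the resulting branched surface so that $\partial B$ carries an open neighborhood of the longitude, and so that each carried boundary slope is realized by a transverse (not tangential) intersection of an honest taut foliation. This is accomplished by a spinning or turbularization construction near $\partial M$: one replaces the toral leaf and its spiralling collar with a product region foliated so as to meet $\partial M$ transversely, and by tilting the spinning one sweeps out slopes in an interval $(-a,b)$. Guaranteeing that this interval has positive length on both sides of $0$---equivalently, that the branching of $\partial B$ is non-degenerate in both directions---requires a careful choice of decomposing surfaces in the hierarchy, and it is here that the non-triviality of $K$ (positive genus, a non-degenerate sutured structure) is essential.

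Finally, I would verify co-orientability and the smoothness needed to apply the taut-foliation machinery of \cite{Calegari2007}, and confirm that the laminar condition on $B$ upgrades the carried essential laminations to genuine foliations for each slope in the family. Collecting the slopes realized over all choices in the spinning family then produces the desired interval $(-a,b)$ with $a,b>0$ of CTF slopes for $K$.
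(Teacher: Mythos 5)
You should first be aware that the paper does not prove this statement at all: Theorem~\ref{thm:liroberts} is imported verbatim from Li--Roberts \cite{LR} and used as a black box (the paper's own work begins only where the theorem is applied, as in Lemma~\ref{lem:1/n} and Lemma~\ref{lem:cover1/n}). So your sketch can only be measured against the proof in \cite{LR} itself, and in outline it does track that proof: Li and Roberts also start from a sutured manifold hierarchy of the knot exterior, convert it into a branched surface, verify Li's laminarity conditions, and realize boundary slopes through the measures carried by the boundary train track.

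As a proof, however, your proposal has a genuine gap, and it is precisely the step you yourself flag as ``the main obstacle.'' Gabai already gives that the longitude is a CTF slope (this is how the present paper uses \cite{Gabai3} in the proof of Proposition~\ref{prop:generic}), so the entire content of the theorem is the open interval of slopes around $0$, with positive length on \emph{both} sides. Your proposal reduces this to arranging that the boundary train track $\partial B$ branches non-degenerately in both directions, and then defers that arrangement to ``a careful choice of decomposing surfaces in the hierarchy'' together with a spinning/turbularization construction, without indicating how the choice is made or why it is always possible for an arbitrary non-trivial knot. That is where essentially all of the work in \cite{LR} lies. In particular, the boundary control cannot be achieved by a local modification near $\partial M$ alone: replacing a toral leaf with a transverse product region is constrained by the holonomy of the spiralling leaves, and by itself such a modification does not sweep out slopes on both sides of the longitude. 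One must build the branched surface globally from the hierarchy so that its boundary train track admits transverse measures of both signs, and then prove that every fully carried measured lamination extends to a co-orientable taut foliation meeting $\partial M$ transversely in the prescribed slope. Since your proposal asserts rather than establishes this, it is a correct high-level plan --- indeed the same plan as \cite{LR} --- but not a proof.
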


In fact, they conjecture the existence of a universal interval for all knots.  
\begin{conjecture}[Li-Roberts, Conjecture 1.9 in \cite{LR}]\label{conj:lr}
Let $K$ be a non-trivial knot in $S^3$.  If $\alpha \in (-1,1)$, then $\alpha$ is a CTF slope for $K$.  
\end{conjecture}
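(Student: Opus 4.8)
The plan is to upgrade Theorem~\ref{thm:liroberts} by pinning the endpoints of the Li--Roberts interval $(-a,b)$ at the universal values $\pm 1$, refining the branched-surface constructions of \cite{LR}. Write $M$ for the exterior of $K$. I would start from a minimal-genus Seifert surface $S$, whose boundary is the longitude, i.e.\ the slope-$0$ curve on $\partial M$. Applying Gabai's sutured-manifold theory \cite{Gabai1} to $M$ cut along $S$ yields a co-orientable taut foliation meeting $\partial M$ transversely in longitudes; equivalently, a branched surface $B \subset M$ that fully carries such a foliation with boundary slope $0$. The content of the conjecture is then to deform this picture so as to realize every rational slope strictly between $-1$ and $1$.

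The first main step is a local modification of $B$ in a collar of $\partial M$: following Roberts, one inserts a spiralling annular piece so that the carried leaves wind around the boundary torus, converting the longitudinal boundary slope into a prescribed rational slope $p/q$, with the amount of winding a free parameter. I would then check that the resulting branched surface $B_{p/q}$ remains laminar in the sense of Gabai--Oertel, so that the lamination it carries can be split open and filled in to an honest co-orientable taut foliation meeting $\partial M$ in curves of slope $p/q$. The second step is to pin down the exact range of achievable slopes. Here the bound should come from the Thurston norm: because $S$ is minimal genus, the longitude is the most efficient boundary slope, and spiralling in either direction can absorb at most one meridional twist before the carried surface is forced to compress or to develop a half-Reeb component. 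That the endpoints must genuinely be excluded is already visible for the right-handed trefoil, where filling the exterior along slope $+1$ gives the Poincar\'e homology sphere; since this is an L-space it carries no co-orientable taut foliation, so by the observation following the definition of CTF slope the value $+1$ cannot be a CTF slope, and any correct construction must degenerate exactly there.

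The decisive difficulty, and the step I expect to be the main obstacle, is the non-fibered case together with the \emph{uniformity} of the interval. For fibered knots the fibration supplies an explicit suspension flow, the spiralling construction is transparent, and the full interval $(-1,1)$ is reached directly. For a general knot one must instead run a sutured-manifold hierarchy, and at each stage the regions of $M$ complementary to the carried lamination have to be refoliated compatibly while preserving both tautness and the prescribed boundary slope. Controlling this refoliation along the entire hierarchy --- and in particular proving that the construction degenerates only in the limits $p/q \to \pm 1$, rather than at some knot-dependent slope --- is where the universal value of the interval must be established. I would attempt this by an induction along the hierarchy, tracking at each decomposition the holonomy contributed at $\partial M$ by the decomposing surface, and arranging that the accumulated boundary holonomy always corresponds to a slope in the open interval $(-1,1)$.
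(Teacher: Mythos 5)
The statement you are addressing is not a theorem of this paper at all: it is the Li--Roberts Conjecture (Conjecture 1.9 of \cite{LR}), quoted here as Conjecture~\ref{conj:lr} precisely because it is open. The paper never proves it; it only records that it holds for special families (hyperbolic fibered knots \cite{Roberts2001} and non-special alternating knots \cite{Roberts1995}) and otherwise uses it as a \emph{hypothesis} (in Theorem~\ref{thm:conditional}, Corollary~\ref{cor:higherwhite}, and the final proposition of Section~\ref{sec:misc}). So there is no proof in the paper to compare against, and your proposal must be judged as a purported resolution of an open problem. As such it has a genuine gap, one you in fact name yourself: the entire content of the conjecture is the \emph{uniformity} of the interval, i.e.\ that every slope in $(-1,1)$ is achieved independently of the knot. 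Theorem~\ref{thm:liroberts} already gives a knot-dependent interval $(-a,b)$, and your plan --- spiral a branched surface coming from a sutured-manifold hierarchy, keep it laminar, and track boundary holonomy inductively along the hierarchy --- is exactly the known strategy behind that theorem. The step where you would show that the construction degenerates only in the limit $p/q \to \pm 1$, rather than at some knot-dependent slope, is the open problem itself; announcing an induction you would ``attempt'' does not supply its inductive step.

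Moreover, one of your supporting claims is false, and it matters. You assert that because the minimal-genus Seifert surface realizes the longitude, spiralling ``can absorb at most one meridional twist'' in either direction, and that any correct construction ``must degenerate exactly'' at both endpoints $\pm 1$. The paper itself contradicts this: by \cite[p.10]{RobertsPunctured1} together with Theorem~\ref{thm:brwcondition}, the set of CTF slopes of the right-handed trefoil $T_{2,3}$ is exactly $(-\infty,1)$ --- unbounded in the negative direction --- while for $T_{-2,3}$ it is $(-1,\infty)$. The conjecture does not say the CTF slopes of a knot are confined to $(-1,1)$; it says $(-1,1)$ is \emph{contained} in the set of CTF slopes of every non-trivial knot, and $(-1,1)$ is the optimal universal interval only because of mirror pairs such as the two trefoils, where $(-\infty,1)\cap(-1,\infty)=(-1,1)$. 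An argument whose mechanism is a Thurston-norm bound forcing failure at both endpoints for every knot is arguing for a statement that is simply untrue, so it cannot be the route to the (still conjectural) universal lower bound.
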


We point out that the Li-Roberts Conjecture is known for many families of knots, including hyperbolic fibered knots \cite{Roberts2001} and non-special alternating knots \cite{Roberts1995}.  

In analogy with CTF multislopes, we have the following definition for left-orderability, and consequently excellence.

\begin{definition}
We say that $\bfalpha$ is an {\em LO multislope} if $\pi_1(M(\bfalpha))$ is left-orderable.  We say that $\bfalpha$ is {\em excellent} if it is both an LO multislope and a CTF multislope.  If $M$ is the exterior of a knot $K$ in $S^3$, we will say that $\alpha$ is an {\em LO slope for $K$} (respectively {\em excellent slope for $K$}) if it is an LO slope (respectively excellent slope) for $M$.  
\end{definition}

Observe that if $\bfalpha$ is an excellent multislope, the manifold $M(\bfalpha)$ is excellent.  

There is a strong relationship between taut foliations and left-orderability for homology spheres.  Note that on an integer homology sphere, any foliation is automatically co-orientable.  In \cite{CD}, it is shown that an atoroidal homology sphere with a taut foliation has left-orderable fundamental group.  
This was then extended to all integer homology spheres.  
\begin{lemma}[Boileau-Boyer, Lemma 0.4 in \cite{BB}]\label{lem:toroidalzs3}
Suppose that $Y$ is an integer homology sphere admitting a taut foliation.  Then, $\pi_1(Y)$ is left-orderable.  
\end{lemma}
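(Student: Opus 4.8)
The plan is to produce a non-trivial homomorphism from $\pi_1(Y)$ to a left-orderable group and then invoke Theorem~\ref{thm:brw}. Since $Y$ is an integer homology sphere admitting a taut foliation $\mathcal{F}$, the manifold is prime and, being a homology sphere, is not $S^2 \times S^1$, hence is irreducible; it also has infinite fundamental group. An irreducible three-manifold with infinite $\pi_1$ is aspherical, so $H^2(\pi_1(Y);\mathbb{Z}) \cong H^2(Y;\mathbb{Z}) \cong H_1(Y;\mathbb{Z}) = 0$, where the middle isomorphism is Poincar\'e duality. Moreover, any foliation on an integer homology sphere is automatically co-orientable, since the obstruction lies in $H^1(Y;\mathbb{Z}/2) = 0$. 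If $Y$ is atoroidal, the conclusion is exactly the theorem of Calegari--Dunfield \cite{CD}, so the real content is the toroidal case, which I would approach as follows.

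The core mechanism is Thurston's universal circle. After arranging (via Candel's uniformization theorem) that the leaves of $\mathcal{F}$ carry hyperbolic metrics, one obtains a non-trivial, orientation-preserving action $\rho \colon \pi_1(Y) \to \operatorname{Homeo}_+(S^1)$. The obstruction to lifting $\rho$ to an action on $\mathbb{R}$---equivalently, to a homomorphism into the left-orderable group $\widetilde{\operatorname{Homeo}_+(S^1)} \leq \operatorname{Homeo}_+(\mathbb{R})$---is the real Euler class, which lives in $H^2(\pi_1(Y);\mathbb{Z})$. Since we showed this group vanishes, the lift exists. As $\rho$ is non-trivial, the lifted action is non-trivial as well (a trivial lift would project to a trivial circle action), and Theorem~\ref{thm:brw} then gives that $\pi_1(Y)$ is left-orderable. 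In the special case where $\mathcal{F}$ is $\mathbb{R}$-covered, one bypasses the Euler-class step entirely and obtains the $\mathbb{R}$-action directly, as recorded in \cite[Corollary 7.11]{CD}.

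The main obstacle is that the universal-circle construction genuinely requires hyperbolic leaves, whereas a taut foliation on a toroidal integer homology sphere may contain torus (or otherwise non-hyperbolic) leaves---these are precisely the leaves that obstruct Candel uniformization. To handle this I would decompose $Y$ along a maximal disjoint collection of torus leaves together with the JSJ tori. On each atoroidal piece the Calegari--Dunfield action is available, while each Seifert fibered piece is treated using the known equivalence for Seifert manifolds \cite{BGW,BRW,LS}; here one uses that the only Seifert fibered integer homology sphere \emph{without} left-orderable fundamental group is the Poincar\'e sphere, which carries no taut foliation since it is an L-space with finite $\pi_1$. The delicate step, where the real work lies, is to assemble these local actions into a single coherent action on a circle (or directly into a global left-order) that is compatible across the separating tori, and to verify that the assembled action remains non-trivial so that the Euler-class lift does not collapse to the trivial homomorphism.
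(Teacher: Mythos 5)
First, a point of comparison: this paper never proves Lemma~\ref{lem:toroidalzs3} at all --- it is quoted from Boileau--Boyer \cite{BB}, and the route indicated there (and sketched in this paper's introduction) is precisely your first two paragraphs: uniformize the leaves, take the Thurston universal circle action on $S^1$ \cite{CD}, kill the Euler class using $H^2(\pi_1(Y);\mathbb{Z}) \cong H^2(Y;\mathbb{Z}) \cong H_1(Y;\mathbb{Z}) = 0$, lift to $\operatorname{Homeo}_+(\mathbb{R})$, and apply Theorem~\ref{thm:brw}. Up to that point your argument is essentially the correct one.

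The genuine gap is your third paragraph, and it rests on a misconception. The obstruction to Candel's uniformization theorem is not the presence of essential tori in $Y$; it is the existence of a holonomy-invariant transverse measure of non-negative Euler characteristic. On an integer homology sphere no invariant transverse measure can exist at all: by Ruelle--Sullivan, such a measure on a co-oriented foliation determines a class in $H^1(Y;\mathbb{R})$ that evaluates strictly positively on a closed transversal meeting its support, and tautness supplies a closed transversal through every leaf, so the class would be non-zero --- contradicting $H^1(Y;\mathbb{R}) = 0$. (In particular $\mathcal{F}$ has no compact leaves whatsoever: a compact leaf meets a closed transversal with algebraically non-zero intersection number, hence would represent a non-zero class in $H_2(Y;\mathbb{Z}) = 0$.) So the torus leaves you worry about cannot occur, Candel applies to $\mathcal{F}$ on all of $Y$ whether or not $Y$ is toroidal, and your second paragraph already completes the proof in full generality; the toroidal case needs no separate treatment. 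Worse, the repair you propose is not a proof even in outline: you cut $Y$ along tori and then explicitly leave open the step of assembling the local circle actions or orders into a single one compatible across the gluing tori. That assembly problem is genuinely hard --- it is exactly the kind of statement that requires slope-matching hypotheses, as in Lemma~\ref{lem:excellentgluing} and Theorem~\ref{thm:jsjlo} of this paper and in \cite{BC} --- and nothing you wrote addresses it. (Also, the pieces of your decomposition have torus boundary, so the closed-manifold dichotomy you invoke for the Seifert pieces --- Poincar\'e sphere versus left-orderable --- does not even apply to them.) The fix is not to carry out that gluing but to notice, via the homological observations above, that it is unnecessary.
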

Therefore, if an integer homology sphere $Y$ admits a taut foliation, $Y$ is automatically excellent.  Using this observation and Theorem~\ref{thm:liroberts}, we can construct numerous excellent slopes on exteriors of knots in $S^3$.  

\begin{lemma}\label{lem:1/n}
Let $K$ be a non-trivial knot in $S^3$.  Then, there exists $k_0$ such that if $|k| > k_0$, $\frac{1}{k}$ is an excellent slope for $K$.  
\end{lemma}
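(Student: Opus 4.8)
The plan is to combine the Li--Roberts interval of CTF slopes (Theorem~\ref{thm:liroberts}) with the Boileau--Boyer result (Lemma~\ref{lem:toroidalzs3}), exploiting the elementary observation that every $\tfrac{1}{k}$-surgery on a knot in $S^3$ produces an integer homology sphere. Let $M$ denote the exterior of $K$, with slopes measured in the standard meridian--longitude basis. By Theorem~\ref{thm:liroberts} there is an interval $(-a,b)$ with $a,b>0$ such that every $\alpha\in(-a,b)$ is a CTF slope for $K$.

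First I would choose $k_0$ large enough that $\tfrac{1}{k}\in(-a,b)$ whenever $|k|>k_0$; since $\tfrac{1}{k}\to 0$ as $|k|\to\infty$ and $0$ lies in the interior of $(-a,b)$, it suffices to take $k_0=\max\{1/a,1/b\}$. For such $k$, the slope $\tfrac{1}{k}$ is a CTF slope, so $M$ carries a co-orientable taut foliation meeting $\partial M$ transversely in circles of slope $\tfrac{1}{k}$. As noted after the definition of CTF multislope, capping off the filling solid torus then shows that the closed manifold $M(\tfrac{1}{k})=S^3_{1/k}(K)$ admits a co-orientable taut foliation.

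The key point is that $S^3_{1/k}(K)$ is an integer homology sphere, since $H_1(S^3_{p/q}(K);\mathbb{Z})\cong\mathbb{Z}/p$ and here $p=1$. Consequently the taut foliation just produced confirms that $\tfrac{1}{k}$ is a CTF slope, and by Lemma~\ref{lem:toroidalzs3} the group $\pi_1(S^3_{1/k}(K))$ is left-orderable, so $\tfrac{1}{k}$ is also an LO slope. Hence $\tfrac{1}{k}$ is an excellent slope for every $|k|>k_0$, which is the assertion.

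There is no serious technical obstacle here: the argument is a direct assembly of the cited results. The only step requiring any care is the homology computation identifying the fillings as integer homology spheres, which is precisely what lets one bootstrap from taut foliations to left-orderability via Boileau--Boyer. Without the homology-sphere condition there is no general mechanism to pass from \eqref{equiv:ctf} to \eqref{equiv:lo}, so this observation is what makes the lemma work rather than any delicate estimate.
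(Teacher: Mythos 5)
Your proof is correct and follows essentially the same route as the paper's: Theorem~\ref{thm:liroberts} supplies the CTF slopes $\frac{1}{k}$ for $|k|>k_0$, and since $H_1(S^3_{1/k}(K))=0$, Lemma~\ref{lem:toroidalzs3} upgrades the taut foliation to left-orderability, giving excellence. The only difference is that you spell out the explicit choice $k_0=\max\{1/a,1/b\}$, which the paper leaves implicit.
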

\begin{proof}
By Theorem~\ref{thm:liroberts}, there exists $k_0$ such that if $|k| > k_0$, then $\frac{1}{k}$ is a CTF slope for $K$.  Since $H_1(S^3_{\frac{1}{k}
}(K)) = 0$, Lemma~\ref{lem:toroidalzs3} implies that $\frac{1}{k}$ is an LO slope for $K$.  Thus, $\frac{1}{k}$ is excellent for $|k| > k_0$.  
\end{proof}

\subsection{Toroidal manifolds}
We are interested in the more general question of when we can glue pieces with toral boundary together to obtain manifolds with taut foliations and/or left-orderable fundamental group.  Let $M$ be a compact three-manifold with boundary a disjoint union of tori, $S_1,\ldots,S_m$, for $m \geq 0$.  Further, let $\mathcal{J}$ be a disjoint union of incompressible, separating tori $T_1,\ldots, T_n$ in $M$ which are not boundary parallel.  Let $X_1,\ldots,X_{n+1}$ be the components of $M$ cut along $\mathcal{J}$.  Choosing a slope on each component of $\partial M \cup \mathcal{J}$ defines a multislope $\bfalpha_i$ on the boundary of $X_i$ for each $1 \leq i \leq n+1$.    

\begin{lemma}\label{lem:excellentgluing}
Let $M$ be as above.  If $\bfalpha_i$ is a CTF multislope for each $X_i$, then $\bfalpha|_{\partial M}$ is a CTF multislope for $M$.   If in addition, $\bfalpha_i$ is an LO multislope for each $X_i$, then $\bfalpha|_{\partial M}$ is an LO multislope for $M$.  Consequently, if $\bfalpha_i$ is excellent for each $X_i$, then $\bfalpha|_{\partial M}$ is excellent for $M$.  
\end{lemma}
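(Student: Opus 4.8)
The plan is to prove the two assertions separately, gluing the foliations directly for the CTF statement and using Theorem~\ref{thm:brw} for the LO statement; the excellence claim then follows immediately by combining them. First I would handle the CTF assertion. By hypothesis each piece $X_i$ carries a co-orientable taut foliation $\mathcal{F}_i$ meeting every boundary torus transversely in circles of the prescribed slope $\alpha$. The key point is that on each gluing torus $T_j$ the two adjacent pieces induce foliations by circles of the \emph{same} slope $\alpha_j$ (since the multislope is determined by a single choice of slope on each component of $\partial M \cup \mathcal{J}$). Two transverse circle foliations of the same slope on a torus are isotopic, so after an isotopy of one of the foliations near $T_j$ we may arrange that the leaves match up. I would then appeal to the standard fact that co-orientable taut foliations meeting a torus transversely in a common curve family can be glued to produce a co-orientable taut foliation on the union; tautness is preserved because a transverse curve in each piece can be joined across $T_j$, and co-orientability is preserved since the co-orientations can be chosen compatibly across the matched leaves. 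Performing this gluing along all of $T_1,\ldots,T_n$ yields a co-orientable taut foliation on $M$ meeting $\partial M$ transversely in circles of slope $\bfalpha|_{\partial M}$, which is exactly the statement that $\bfalpha|_{\partial M}$ is a CTF multislope for $M$.

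Next I would treat the LO assertion. Now assume each $\bfalpha_i$ is also an LO multislope, so each $\pi_1(X_i(\bfalpha_i))$ is left-orderable. The filled manifold $M(\bfalpha|_{\partial M})$ decomposes along the tori $T_1,\ldots,T_n$ into the filled pieces $X_i(\bfalpha_i)$. The natural strategy is a van Kampen / Bass--Serre argument: $\pi_1(M(\bfalpha|_{\partial M}))$ is the fundamental group of a graph of groups whose vertex groups are the $\pi_1(X_i(\bfalpha_i))$ and whose edge groups are the $\pi_1(T_j) \cong \mathbb{Z}^2$. Since the $T_j$ are incompressible, each edge-group inclusion is injective, so this is a genuine amalgam/HNN structure. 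I would then invoke the fact that left-orderability is closed under such amalgamated constructions over left-orderable edge groups provided the orders on the edge groups can be made compatible: a graph of groups with left-orderable vertex groups and left-orderable edge subgroups has left-orderable fundamental group when the orders extend consistently. Here the edge groups are $\mathbb{Z}^2$, which is bi-orderable, and this is precisely the situation in which the gluing theorem for left-orders applies.

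The main obstacle I anticipate is the LO gluing step rather than the foliation gluing, which is essentially classical. The subtlety is that left-orderability is \emph{not} in general preserved under amalgamation or HNN extension; one needs the order restrictions to the edge subgroups to be compatible in a way that lets them be amalgamated. The cleanest route is to verify that $\bigl(M(\bfalpha|_{\partial M}), T_1 \cup \cdots \cup T_n\bigr)$ fits the hypotheses of a known left-order gluing result along incompressible tori (of the type used in the Boyer--Clay and Calegari--Walker circle of ideas), where one uses that each piece contributes a non-trivial action on $\mathbb{R}$ restricting compatibly to the boundary. Once that compatibility is in place the argument is formal. Finally, combining the two conclusions: if each $\bfalpha_i$ is excellent then it is simultaneously CTF and LO, so $\bfalpha|_{\partial M}$ is both a CTF multislope and an LO multislope for $M$, hence excellent, completing the proof.
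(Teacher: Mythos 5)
Your CTF paragraph is fine and matches the paper, which treats that half as immediate (one simply glues the foliations along the matched boundary circles). The LO half, however, has a genuine gap --- in fact two. First, your decomposition is wrong: $M(\bfalpha|_{\partial M})$ cut along $T_1,\ldots,T_n$ does \emph{not} have the filled manifolds $X_i(\bfalpha_i)$ as its pieces; the pieces are the $X_i$ with only their $\partial M$-boundary components filled, while the interior tori $T_j$ remain unfilled. So there is no graph of groups with vertex groups $\pi_1(X_i(\bfalpha_i))$ and edge groups $\pi_1(T_j)\cong\mathbb{Z}^2$; the groups $\pi_1(X_i(\bfalpha_i))$ only arise as \emph{quotients} of the pieces' groups. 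Second, and more seriously, the step you yourself flag as the obstacle is never carried out: there is no general theorem that a graph of groups with left-orderable vertex groups and $\mathbb{Z}^2$ edge groups has left-orderable fundamental group, and the compatibility of orders (or of actions on $\mathbb{R}$ restricting to the boundary) that such gluing results require --- in the Boyer--Clay framework this is phrased as detection of the identified slopes --- is exactly what the hypothesis ``each $\bfalpha_i$ is an LO multislope'' does not provide. As written, your argument reduces the lemma to an unproved compatibility assertion.

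The paper's proof avoids this entirely by a different mechanism, which is worth internalizing. It inducts on the number of tori (using that each $T_j$ is \emph{separating}, a hypothesis you never invoke), cuts $M$ along $T_1$ into $X'$ and $X''$, and analyzes $\langle\langle \bfalpha' \rangle\rangle \cap \pi_1(T_1)$: this intersection is $\langle \alpha_1 \rangle$, all of $\pi_1(T_1)$, or $\mathbb{Z}\oplus k\mathbb{Z}$ with $k\geq 2$, and the last case is impossible because it would force torsion in the left-orderable group $\pi_1(X'(\bfalpha'))$. In each surviving case one obtains a \emph{non-trivial homomorphism} from $\pi_1(M(\bfalpha))$ either onto $\pi_1(X'(\bfalpha'))$ or onto $\pi_1(X'(\bfalpha')) *_{\mathbb{Z}} \pi_1(X''(\bfalpha''))$ --- an amalgam over a \emph{cyclic} group, where left-orderability is preserved unconditionally by \cite[Corollary 5.3]{BG}, with no compatibility hypothesis needed. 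Finally, since $M(\bfalpha)$ carries a co-orientable taut foliation (this is where the ``if in addition'' structure of the statement is used), it is prime, and Theorem~\ref{thm:brw} upgrades the non-trivial homomorphism to a left-orderable group into left-orderability of $\pi_1(M(\bfalpha))$ itself. The moral is that one need not assemble a left-order on $\pi_1(M(\bfalpha))$ from orders on the pieces: it suffices to map it non-trivially to a left-orderable group, and Dehn filling makes the images of the torus groups cyclic, which is precisely what renders the amalgamation harmless.
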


Lemma~\ref{lem:excellentgluing} can be rephrased in a simple way for the case that $M$ is closed.  In this case, it says that if one glues manifolds with toral boundary such that CTF multislopes (respectively excellent multislopes) are identified, the resulting manifold still has a co-orientable taut foliation (respectively is excellent).  The first claim in Lemma~\ref{lem:excellentgluing} is in fact trivial.  One can simply glue the foliations on each $X_i$ together.  We are thus interested in showing that $\bfalpha|_{\partial M}$ is an LO slope on $M$.  This will be a generalization of the following theorem.   

\begin{theorem}[Clay-Lidman-Watson, Theorem 2.7 in \cite{CLW}]\label{thm:jsjlo}
Let $X_1$ and $X_2$ be compact, oriented, connected three-manifolds with incompressible torus boundary.  Suppose $\alpha_1$ and $\alpha_2$ are LO slopes on $X_1$ and $X_2$ respectively, and $f: \partial X_1 \to \partial X_2$ is an orientation-reversing homeomorphism such that $f(\alpha_1) = \alpha_2$.  If $Y = X_1 \cup_f X_2$ is irreducible, then $Y$ has left-orderable fundamental group.  
\end{theorem}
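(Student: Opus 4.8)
The plan is to reduce the statement to the Boyer--Rolfsen--Wiest theorem (Theorem~\ref{thm:brw}), so that it suffices to produce a single \emph{nontrivial} homomorphism from $\pi_1(Y)$ to some left-orderable group. By van Kampen's theorem, if $T$ denotes the gluing torus then $\pi_1(Y) \cong \pi_1(X_1) *_{\pi_1(T)} \pi_1(X_2)$, an amalgam over $\pi_1(T) \cong \mathbb{Z}^2$ in which $\alpha_1$ and $\alpha_2$ are identified, via $f_*$, to a single primitive class $\alpha$; fix a dual class $\beta$, so $\pi_1(T) = \langle \alpha, \beta\rangle$. The gluing torus $T$ is incompressible in $Y$ (a compressing disk would lie in one $X_j$, contradicting incompressibility of $\partial X_j$), so $\pi_1(Y) \supseteq \mathbb{Z}^2$ and in particular $Y \neq \mathbb{R}P^3$; hence the irreducible $Y$ is $P^2$-irreducible and Theorem~\ref{thm:brw} applies. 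Finally, the hypothesis that $\alpha_j$ is an LO slope says precisely that $G_j := \pi_1(X_j)/\langle\langle \alpha_j\rangle\rangle = \pi_1(X_j(\alpha_j))$ is left-orderable, hence nontrivial and torsion-free.

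The construction I would use is dynamical. Each $G_j$ is countable and left-orderable, hence acts faithfully on $\mathbb{R}$ by orientation-preserving homeomorphisms; composing with $\pi_1(X_j) \twoheadrightarrow G_j$ gives actions $\rho_j \colon \pi_1(X_j) \to \operatorname{Homeo}_+(\mathbb{R})$ in which $\alpha$ acts trivially and $\beta$ acts as some homeomorphism $h_j$. If the orders (equivalently, the actions) can be chosen so that $\beta$ acts \emph{without fixed points} in each $G_j$, then, since every fixed-point-free orientation-preserving homeomorphism of $\mathbb{R}$ is conjugate to the unit translation $x \mapsto x+1$, after conjugating $\rho_1$ and $\rho_2$ I may assume $h_1 = h_2 = (x \mapsto x+1)$. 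Then $\rho_1$ and $\rho_2$ agree on all of $\pi_1(T) = \langle\alpha,\beta\rangle$, so by the universal property of the amalgam they assemble to a homomorphism $\rho \colon \pi_1(Y) \to \operatorname{Homeo}_+(\mathbb{R})$. This $\rho$ is nontrivial (its restriction to $\pi_1(X_1)$ is the faithful, hence nontrivial, action of $G_1 \neq 1$), and $\operatorname{Homeo}_+(\mathbb{R})$ is left-orderable, so Theorem~\ref{thm:brw} finishes the proof.

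Algebraically, this amounts to showing the nontrivial quotient $Q := \pi_1(Y)/\langle\langle\alpha\rangle\rangle$ is left-orderable: a standard pushout computation identifies $Q \cong G_1 *_{C} G_2$, the amalgam over the common image $C$ of $\langle\beta\rangle$. When $\beta$ dies in both fillings, $C = 1$ and $Q = G_1 * G_2$ is a free product of left-orderable groups, hence left-orderable by Vinogradov's theorem; this is the easy case. Otherwise, torsion-freeness of the $G_j$ forces $C \cong \mathbb{Z}$, and $Q$ is a genuine amalgam over an infinite cyclic group.

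The main obstacle is exactly this last case, because the free product of left-orderable groups amalgamated over $\mathbb{Z}$ need \emph{not} be left-orderable in general (Bludov--Glass). Thus the heart of the argument is the claim that the left-order on each $G_j$ can be chosen so that $\beta$ is cofinal---equivalently, so that $C = \langle\beta\rangle$ is left-relatively convex in $G_j$; granting this, amalgamating the two orders along the Bass--Serre tree of $Q$ (or, concretely, via the dynamical gluing above) produces the desired order. I would try to establish this cofinality from the geometry: the image of $\beta$ is, up to conjugacy, the core of the Dehn filling solid torus, and I expect its special position to force the needed relative convexity. Every other step---van Kampen, the pushout identification, and the passage from an action to the hypothesis of Theorem~\ref{thm:brw}---is routine.
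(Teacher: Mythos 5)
You have the right skeleton---reduce to Theorem~\ref{thm:brw} by producing a nontrivial homomorphism from $\pi_1(Y)$ to a left-orderable group, and analyze the image of $\pi_1(T)$ in the filled groups $G_j=\pi_1(X_j(\alpha_j))$---and this is indeed how the paper argues (it proves the generalization, Lemma~\ref{lem:excellentgluing}, by exactly this scheme, attributing the original argument to \cite{CLW}). But your proposal does not prove the theorem: in the essential case, where the image of $\beta$ is infinite in both $G_1$ and $G_2$, everything rests on your claim that the orders (equivalently, the actions on $\mathbb{R}$) can be chosen so that $\beta$ acts without fixed points on both sides, and you leave this unproved (``I would try to establish this cofinality from the geometry \dots I expect \dots''). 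There is no geometry left to exploit at that point: $G_j$ is just some left-orderable group and $\bar\beta$ just some infinite-order element of it. Worse, your appeal to Bludov--Glass is backwards. Bludov--Glass is not the source of counterexamples over $\mathbb{Z}$; \cite[Corollary 5.3]{BG} is precisely the theorem that a free product of left-orderable groups amalgamated over an infinite cyclic subgroup \emph{is} left-orderable, with no cofinality or relative convexity hypotheses. (Their negative examples, like the geometric ones coming from non-left-orderable Sol manifolds obtained by gluing two twisted $I$-bundles over the Klein bottle, involve larger amalgamating subgroups such as $\mathbb{Z}^2$.) The paper's proof simply maps $\pi_1(Y)$ onto $G_1 *_{\mathbb{Z}} G_2$, quotes \cite[Corollary 5.3]{BG} to see that the target is left-orderable, and applies Theorem~\ref{thm:brw}. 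So the step you single out as the main obstacle, and do not close, is a known result that needed only to be cited; as written, your argument establishes the theorem only when $\beta$ dies in both fillings (the Vinogradov case).

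There is also a genuine error in your case analysis. If $\bar\beta$ is trivial in exactly one filling, say $G_1$, then $Q=\pi_1(Y)/\langle\langle\alpha\rangle\rangle$ is not an amalgam $G_1 *_C G_2$ over a common image $C$: the pushout is $G_1 * \bigl(G_2/\langle\langle\bar\beta\rangle\rangle\bigr)$, whose second factor, being a quotient of a left-orderable group, need not be left-orderable (or even nontrivial), so $Q$ itself may fail to be left-orderable. Your dynamical gluing also cannot run in this case, since $\beta$ acts trivially in every action factoring through $G_1$ and hence can never be made fixed-point free on both sides. The paper handles this case separately and more simply: the quotients $\pi_1(X_1)\to G_1$ and $\pi_1(X_2)\to 1$ agree on $\pi_1(T)$ (both kill it), hence glue to a homomorphism $\pi_1(Y)\to G_1$, which is nontrivial because $G_1\neq 1$, and Theorem~\ref{thm:brw} finishes. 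With that case added, and with \cite[Corollary 5.3]{BG} cited in place of your unproved fixed-point-freeness claim, your outline becomes the paper's proof.
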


The argument we use to prove Lemma~\ref{lem:excellentgluing} will essentially repeat that of Theorem~\ref{thm:jsjlo}.  We refer the reader to the proof of that theorem for more details.  
\begin{proof}[Proof of Lemma~\ref{lem:excellentgluing}]
Let $T_1,\ldots,T_n$ be the components of $\mathcal{J}$ and let $S_1,\ldots,S_m$ be the components of $\partial M$, as above.  We abuse notation and write $M(\bfalpha)$ for $M(\bfalpha|_{\partial M})$.  We would like to show that $\pi_1(M(\bfalpha))$ is left-orderable.  We prove the result by induction on $n$.  First, let $n = 0$.  In this case, we have that $(\alpha_{1},\ldots,\alpha_{m})$ is an LO slope by assumption.  Now suppose that the result holds for a non-negative integer $n$.  We will show the result holds for $n+1$.  Recall that each $T_i \in \mathcal{J}$ is separating.  Let $M$ cut along $T_1$ have components $X'$ and $X''$ and let $\bfalpha'$ and $\bfalpha''$ be the induced multislopes on $\partial X'$ and $\partial X''$ respectively.  

Let $\alpha_1$ denote the element of $\bfalpha$ which is a slope on $T_1$.  By induction, we have that $\bfalpha'$ and $\bfalpha''$ are LO slopes on $X'$ and $X''$ respectively.  We study the normal closure of $\bfalpha'$, $\langle \langle \bfalpha' \rangle \rangle$, in $\pi_1(X')$.  We consider $\langle \langle \bfalpha' \rangle \rangle \cap \pi_1(T_1)$.  This group is either $\langle \alpha_1 \rangle$, $\pi_1(T_1)$, or $\mathbb{Z} \oplus k\mathbb{Z}$ for some $k \geq 2$.  Observe that the third possibility cannot occur, since in this case $\pi_1(X'(\bfalpha'))$ would have torsion, contradicting the assumption that $\pi_1(X'(\bfalpha'))$ is left-orderable.  A similar discussion applies for $\bfalpha''$ on $X''$.  

We first consider the case that $\langle \langle \bfalpha' \rangle \rangle \cap \pi_1(T_1) = \pi_1(T_1)$ in $\pi_1(X')$.  Observe that in this case the quotient maps $\pi_1(X') \to \pi_1(X')/\langle \langle \bfalpha' \rangle \rangle$ and $\pi_1(X'') \to 1$ agree on the subgroup $\pi_1(T_1)$.  Since 
\[
\pi_1(M) = \pi_1(X') *_{\pi_1(T_1)} \pi_1(X''),
\]
we obtain a quotient $\pi_1(M) \to \pi_1(X'(\bfalpha'))$.  Since $\langle \langle \bfalpha|_{\partial M} \rangle \rangle$ is in the kernel, we have an induced quotient $\pi_1(M(\bfalpha)) \to \pi_1(X'(\bfalpha'))$.  Because $M(\bfalpha)$ admits a co-orientable taut foliation by assumption, we have that $M(\bfalpha)$ is prime.  Since $\pi_1(X'(\bfalpha'))$ is left-orderable by our induction hypothesis, Theorem~\ref{thm:brw} shows that $\pi_1(M(\bfalpha))$ is left-orderable.  

The case that $\langle \langle \bfalpha'' \rangle \rangle \cap \pi_1(T_1) = \pi_1(T_1)$ in $\pi_1(X'')$ is similar.  Therefore, we assume that $\langle \langle \bfalpha' \rangle \rangle \cap \pi_1(T_1) = \langle \alpha_1 \rangle$ in $\pi_1(X')$ and $\langle \langle \bfalpha'' \rangle \rangle \cap \pi_1(T_1) = \langle \alpha_1 \rangle$ in $\pi_1(X'')$.  Thus, $\pi_1(T_1)$ has cyclic image in $\pi_1(X'(\bfalpha'))$ and $\pi_1(X''(\bfalpha''))$.  We have an induced quotient 
\[
\pi_1(M) \to \pi_1(X'(\bfalpha')) *_{\mathbb{Z}} \pi_1(X''(\bfalpha'')).
\]  
Observe that $\langle \langle \bfalpha|_{\partial M} \rangle \rangle \subset \pi_1(M)$ is contained in the kernel of this quotient map.  Therefore, we obtain a quotient
\[
\pi_1(M(\bfalpha)) \to \pi_1(X'(\bfalpha')) *_{\mathbb{Z}} \pi_1(X''(\bfalpha'')).
\]  
Each of $\pi_1(X'(\bfalpha'))$ and $\pi_1(X''(\bfalpha''))$ is left-orderable by the induction hypothesis.  Therefore, the group $\pi_1(X'(\bfalpha')) *_{\mathbb{Z}} \pi_1(X''(\bfalpha''))$ is left-orderable by \cite[Corollary 5.3]{BG}.  By assumption $M(\bfalpha)$ has a co-orientable taut foliation and therefore is prime.  By Theorem~\ref{thm:brw} we have that $\pi_1(M(\bfalpha))$ is left-orderable.  This completes the proof.                       
\end{proof}

\subsection{Branched covers}\label{subsec:branchedcovers}
Let $K$ be a nullhomologous knot in a rational homology sphere $Y$ with exterior $X$.  Then the generator of $H_2(X, \partial X) \cong H^1(X) \cong \mathbb{Z}$ is represented by a properly embedded orientable surface $F$ with a single boundary component isotopic to $K$ in a neighborhood of $K$.  Let $\mu$ be a meridian of $K$.  The map from $\mathbb{Z}$ to $H_1(X)$ which sends a generator to $\mu$ has a unique splitting $H_1(X) \to \mathbb{Z}$ defined by $[\gamma] \mapsto \gamma \cdot F$.  The corresponding maps $\pi_1(X) \to \mathbb{Z}$ and $\pi_1(X) \to \mathbb{Z}/n$ define canonical infinite and $n$-fold cyclic coverings $X_\infty \to X$ and $X_n \to X$ respectively.  Note that $\mu^n$ lifts to a simple loop $\mu_n \subset \partial X_n$.  The {\em $n$-fold cyclic branched covering of $K$} is then defined to be $\Sigma_n(K) = X_n(\mu_n)$.  The obvious branched covering projection $p_n: \Sigma_n(K) \to Y$ factors through a branched cover $p_{n,m}: \Sigma_n(K) \to \Sigma_m(K)$ for any $m$ which divides $n$; it is clear we have $p_n = p_m \circ p_{n,m}$.  Observe that both $p_n$ and $p_{n,m}$ are non-zero degree maps.  However, in the proof of Theorem~\ref{thm:twobridgesurgery}, we will need something stronger.  
\begin{lemma}\label{lem:pi1surjective}
The induced map $(p_{n,m})_*: \pi_1(\Sigma_n(K)) \to \pi_1(\Sigma_m(K))$ is surjective.  
\end{lemma}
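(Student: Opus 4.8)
The plan is to analyze the covering structure directly via the definition given in Section~\ref{subsec:branchedcovers}. Recall that $\Sigma_n(K) = X_n(\mu_n)$, where $X_n \to X$ is the $n$-fold cyclic cover associated to the surjection $\phi: \pi_1(X) \to \mathbb{Z}/n$, and the filling slope $\mu_n$ is the lift of $\mu^n$. Since $m \mid n$, the covering $p_{n,m}$ arises because $\mathbb{Z}/m$ is a quotient of $\mathbb{Z}/n$; concretely $X_n \to X_m$ is the cover corresponding to the subgroup $\ker(\pi_1(X) \to \mathbb{Z}/n) \subset \ker(\pi_1(X) \to \mathbb{Z}/m)$. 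The first step is to establish surjectivity at the level of the unfilled cyclic covers, i.e.\ that $(p_{n,m})_*: \pi_1(X_n) \to \pi_1(X_m)$ is surjective, before passing to the Dehn fillings.

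The key observation I would use is that surjectivity of $\pi_1(X_n) \to \pi_1(X_m)$ holds because the cover is \emph{connected} and the subgroups are nested kernels of maps to cyclic groups. More precisely, $\pi_1(X_m) = \ker(\pi_1(X) \to \mathbb{Z}/m)$ viewed inside $\pi_1(X)$, and $\pi_1(X_n) = \ker(\pi_1(X) \to \mathbb{Z}/n)$ is a subgroup of it; the induced map on $\pi_1$ is just the inclusion. A cleaner route is to note that $X_m$ itself has a canonical infinite cyclic cover structure: the restriction of $\phi$ gives a surjection $\pi_1(X_m) \to m\mathbb{Z}/n\mathbb{Z} \cong \mathbb{Z}/(n/m)$, and $X_n \to X_m$ is exactly the cyclic cover associated to \emph{this} surjection. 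Surjectivity of $(p_{n,m})_*$ on the unfilled covers then follows from the standard fact that the fundamental group of a connected cyclic cover surjects onto that of its base when the base map is induced by a surjection onto a cyclic group with connected total space; equivalently, $\pi_1(X_m)$ is generated by the image of $\pi_1(X_n)$ together with any loop whose image generates $\mathbb{Z}/(n/m)$, and such a loop (a suitable power of the meridian) lifts to $X_n$ after the branched filling.

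The second step is to pass from $X_n \to X_m$ to the branched covers $\Sigma_n(K) \to \Sigma_m(K)$ by adjoining the filling solid tori. Here I would use van Kampen: $\pi_1(\Sigma_m(K))$ is the quotient of $\pi_1(X_m)$ by the normal closure of $\mu_m$, and similarly for $\Sigma_n(K)$. Since $p_{n,m}$ maps the filling solid torus of $\Sigma_n(K)$ to that of $\Sigma_m(K)$ (the core circles map to core circles, as $\mu_n$ lifts $\mu^{n/m}$ appropriately), the surjection $\pi_1(X_n) \to \pi_1(X_m)$ descends to a surjection on the quotients. The meridian $\mu^n$ that is killed upstairs maps to a loop in the downstairs filling region, so the normal closure relations are compatible, and surjectivity is preserved under taking quotients by compatible normal subgroups.

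The main obstacle I anticipate is the care required in the second step: verifying that the filling data match up, i.e.\ that the loop $\mu_n$ bounding the attached disk in $\Sigma_n(K)$ is carried by $p_{n,m}$ into the normal closure of $\mu_m$ in $\pi_1(\Sigma_m(K))$, so that the quotient map is well-defined and no new generators are needed downstairs. This is essentially a bookkeeping computation about how $\mu^n$ and $\mu^m$ and their lifts sit in the boundary tori $\partial X_n$ and $\partial X_m$, using that $\mu_n$ is the lift of $\mu^n$ and $\mu_m$ the lift of $\mu^m$. Once the compatibility of the boundary identifications is pinned down, surjectivity on the filled manifolds is a formal consequence of surjectivity on the exteriors together with the right-exactness of the van Kampen quotient, so I would expect the bulk of the genuine content to be in confirming this covering-space compatibility rather than in any deep group theory.
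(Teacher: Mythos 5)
Your first step asserts something false: for a non-trivial covering map, the induced map on fundamental groups is \emph{injective}, never surjective. Concretely, $(p_{n,m})_*\colon \pi_1(X_n) \to \pi_1(X_m)$ is the inclusion $\phi^{-1}(n\mathbb{Z}) \subset \phi^{-1}(m\mathbb{Z})$, where $\phi\colon \pi_1(X) \to \mathbb{Z}$ is the canonical surjection; its image has index $n/m > 1$ whenever $m < n$. There is no ``standard fact that the fundamental group of a connected cyclic cover surjects onto that of its base'' --- the standard fact is the opposite, and indeed you contradict yourself in the same sentence by saying the induced map ``is just the inclusion.'' Since your second step obtains surjectivity on the branched covers by descending this (nonexistent) surjection on the exteriors through the van Kampen quotients, the argument as structured does not go through. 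You have also misidentified where the difficulty lies: the compatibility of the filling data, i.e.\ that $\mu^n = (\mu^m)^{n/m}$ is carried into $\langle\langle \mu_m \rangle\rangle$ so that the descended map is well defined, is the trivial part; the genuine content is surjectivity, which \emph{fails} before filling and has to be created by the filling.

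The correct mechanism --- which your parenthetical ``equivalently, \ldots'' gestures at, though it is not equivalent to the false claim it follows --- is that the deficiency of the inclusion $\pi_1(X_n) \subset \pi_1(X_m)$ is precisely the cyclic quotient $\pi_1(X_m)/\pi_1(X_n) \cong \mathbb{Z}/(n/m)$ generated by the image of $\mu^m$, and this generator dies under the filling quotient $q_m\colon \pi_1(X_m) \to \pi_1(\Sigma_m(K))$. Thus $\pi_1(X_m) = \langle \pi_1(X_n), \mu^m \rangle$ and $\mu^m \in \ker q_m$, so the image of $\pi_1(X_n)$ already generates $\pi_1(\Sigma_m(K))$; since $\mu^n = (\mu^m)^{n/m}$ dies as well, the composite $\pi_1(X_n) \to \pi_1(\Sigma_m(K))$ factors through $q_n$, and $(p_{n,m})_*$ is surjective. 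This is essentially the paper's proof: there one writes any $g \in \pi_1(X_m)$ as $g = h\mu^k$ with $h \in \pi_1(X_\infty)$ and $k \equiv 0 \pmod{m}$, so that $q_m(g) = q_m(h)$ with $h \in \pi_1(X_\infty) \subset \pi_1(X_n)$, exhibiting $q_m(g)$ in the image of $\pi_1(\Sigma_n(K))$. Note finally that your phrase ``such a loop \ldots lifts to $X_n$ after the branched filling'' is wrong as stated: $\mu^m$ does not lift to a loop in $X_n$ (that is exactly why the cover is non-trivial); the relevant point is that it becomes trivial in $\pi_1(\Sigma_m(K))$.
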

\begin{proof}
We have inclusions $\pi_1(X_\infty) \subset \pi_1(X_n) \subset \pi_1(X_m) \subset \pi_1(X)$ induced by the covering maps.  If $g \in \pi_1(X)$, then $g = h\mu^k$ for some $k \in \mathbb{Z}$ and some $h \in \pi_1(X_\infty)$.  Let $q_m : \pi_1(X_m) \to \pi_1(\Sigma_m(K))$ be induced by the quotient map $X_m \to \Sigma_m(K)$.  Note that $\ker q_m = \langle \langle \mu^m \rangle \rangle$.  Similarly, we have the map $q_n: \pi_1(X_n) \to \pi_1(\Sigma_n(K))$.  

Let $x \in \pi_1(\Sigma_m(K))$.  Then $x = q_m(g)$ for some $g \in \pi_1(X_m)$.  Writing $g = h\mu^k$ as above, $g \in \pi_1(X_m)$ implies $k \equiv 0 \pmod{m}$.  Therefore, $q_m(g) = q_m(h) \in \pi_1(\Sigma_m(K))$.  Since $q_n(h) \in \pi_1(\Sigma_n(K))$, and $(p_{n,m})_*(q_n(h)) = q_m(h)$, the result follows.
\end{proof}

Let $X$ be the exterior of a knot $K$ in $S^3$ and let $\lambda, \mu$ be a longitude-meridian pair on $\partial X$.  Under the $n$-fold cyclic covering $X_n \to X$, we have that $\lambda$ lifts to a simple closed curve $\widetilde{\lambda}$ and $\mu^n$ lifts to a simple closed curve $\widetilde{\mu}$.  We use $\widetilde \lambda, \widetilde \mu$ as a basis for slopes on $\partial X_n$.  Note that the inverse image of the slope $\mu + k \lambda$ is a single circle of slope $\widetilde{\mu} + nk \widetilde \lambda$.  

\begin{lemma}\label{lem:cover1/n}
Let $K$ be a non-trivial knot in $S^3$ and let $M_n$ denote the $n$-fold cyclic cover of the exterior of $K$.  Then, there exists an integer $k_0$ such that if $|k| > k_0$, then $\widetilde{\mu} + nk \widetilde{\lambda}$ is an excellent slope for $M_n$.  
\end{lemma}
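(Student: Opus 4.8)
The plan is to treat the taut-foliation and left-orderability conditions separately, in both cases pushing the problem down to the slope $\mu + k\lambda$ on the base $X$ and invoking Lemma~\ref{lem:1/n}. I would take $k_0$ to be exactly the constant produced by Lemma~\ref{lem:1/n}, so that for $|k| > k_0$ the slope $\mu + k\lambda$ (i.e.\ the slope $1/k$) is excellent for $X$; that is, it is simultaneously a CTF slope and an LO slope. The goal is then to show that this same $k_0$ works upstairs for $M_n$.

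For the CTF condition I would simply pull back the foliation. Since $\mu + k\lambda$ is a CTF slope for $X$, there is a co-orientable taut foliation $\mathcal{F}$ on $X$ meeting $\partial X$ transversely in circles of slope $\mu + k\lambda$. Pulling $\mathcal{F}$ back along the covering map $M_n = X_n \to X$ yields a foliation $\widetilde{\mathcal{F}}$ that is again taut and co-orientable: tautness is preserved under finite covers by pulling back a transverse volume form (a Rummler--Sullivan closed $2$-form positive on the tangent distribution), and co-orientability by pulling back the transverse orientation. By the description of slopes under the cover recorded just before the statement, the preimage of the slope $\mu + k\lambda$ is a single circle of slope $\widetilde{\mu} + nk\widetilde{\lambda}$, so $\widetilde{\mathcal{F}}$ meets $\partial M_n$ transversely in circles of that slope. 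Hence $\widetilde{\mu} + nk\widetilde{\lambda}$ is a CTF slope for $M_n$, and in particular $M_n(\widetilde{\mu} + nk\widetilde{\lambda})$ admits a co-orientable taut foliation and is therefore prime (and not $\mathbb{R}P^3$, which carries no taut foliation).

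For the LO condition I would use a non-zero degree map together with Theorem~\ref{thm:brw}. Write $C$ for the core of the surgery solid torus in $S^3_{1/k}(K) = X(\mu + k\lambda)$; its exterior is $X$ and its meridian is the slope $\mu + k\lambda$. Because $\lambda$ is nullhomologous in $X$, the classes $\mu$ and $\mu + k\lambda$ generate the same $H_1(X) \cong \mathbb{Z}$, so the cover $X_n \to X$ is precisely the $n$-fold cyclic cover adapted to the meridian of $C$. Filling $\partial M_n$ along $\widetilde{\mu} + nk\widetilde{\lambda}$, the lift of $(\mu + k\lambda)^n$, therefore extends the covering to the $n$-fold cyclic branched covering $p \colon M_n(\widetilde{\mu} + nk\widetilde{\lambda}) \to S^3_{1/k}(K)$ over $C$, which has degree $n \neq 0$. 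Since $\mu + k\lambda$ is an LO slope for $X$, the group $\pi_1(S^3_{1/k}(K)) = \pi_1(X(\mu + k\lambda))$ is left-orderable, and since $M_n(\widetilde{\mu} + nk\widetilde{\lambda})$ is prime by the previous paragraph, Theorem~\ref{thm:brw} gives that $\pi_1(M_n(\widetilde{\mu} + nk\widetilde{\lambda}))$ is left-orderable. Thus $\widetilde{\mu} + nk\widetilde{\lambda}$ is an LO slope, and combined with the CTF conclusion it is excellent for $M_n$.

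The main obstacle is the bookkeeping in the third paragraph: verifying that the cyclic cover $X_n \to X$, which is defined using the meridian $\mu$ of $K$, coincides with the cover adapted to the meridian $\mu + k\lambda$ of the surgery core $C$, so that it genuinely extends to a branched cover after the $\widetilde{\mu} + nk\widetilde{\lambda}$-filling and supplies the non-zero degree map feeding Theorem~\ref{thm:brw}. Once this identification is in place, the result is a direct application of Lemma~\ref{lem:1/n} and Theorem~\ref{thm:brw}.
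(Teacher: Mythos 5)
Your proposal is correct and follows essentially the same route as the paper's proof: pull back the taut foliation under the covering to get the CTF condition, identify $M_n(\widetilde{\mu} + nk\widetilde{\lambda})$ as the $n$-fold cyclic branched cover of $M(\mu + k\lambda)$ to get a non-zero degree map, use Lemma~\ref{lem:1/n} downstairs, and combine primeness (from the foliation) with Theorem~\ref{thm:brw} to conclude left-orderability. The extra bookkeeping you supply in the third paragraph (that $\lambda$ being nullhomologous makes the cover adapted to $\mu$ coincide with the one adapted to the surgery core's meridian) is a correct elaboration of what the paper simply asserts with ``Observe that.''
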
 
\begin{proof}
By Theorem~\ref{thm:liroberts}, we have $k_0$ such that for $|k| > k_0$, $\mu + k \lambda$ is a CTF slope for $K$.  Let $\mathcal{F}$ be such a taut foliation on the exterior of $K$.  Then the preimage of $\mathcal{F}$ under the covering projection from $M_n$ to $M$ is a taut foliation on $M_n$ which intersects the boundary in simple closed curves of slope $\widetilde \mu +  nk \widetilde{\lambda}$.  Therefore, $\widetilde \mu + nk \widetilde{\lambda}$ is a CTF slope on $M_n$.  

Observe that $M_n(\widetilde \mu + nk \widetilde \lambda)$ is an $n$-fold cyclic branched cover of $M(\mu + k \lambda)$.  Therefore, there exists a non-zero degree map from $M_n(\widetilde \mu + nk \widetilde \lambda)$ to $M(\mu + k \lambda)$.  By Lemma~\ref{lem:1/n}, we have that $\pi_1(M(\mu + k \lambda))$ is left-orderable for $|k| > k_0$.  Since $\widetilde \mu + nk \widetilde \lambda$ is a CTF slope on $M_n$, we must have that $M_n(\widetilde \mu + nk \widetilde \lambda)$ is irreducible.  Theorem~\ref{thm:brw} implies that  $\pi_1(M_n(\widetilde \mu + nk \widetilde \lambda))$ is left-orderable.  Thus, $\widetilde \mu + nk \widetilde \lambda$ is excellent for $|k| > k_0$.
\end{proof}

\subsection{Seifert manifolds}

A three-manifold $Y$ is {\em Seifert fibered} if it admits a foliation by circles.  Quotienting $Y$ by the obvious $S^1$-action yields an orbifold, called the {\em base orbifold}.  A rational homology sphere always has base orbifold $S^2$ or $\mathbb{R}P^2$ and an integer homology sphere always has base orbifold $S^2$.  Combining \cite[Proposition 5]{BGW} and \cite[Theorem 1.3]{BRW}, we have that for rational homology spheres, if the base orbifold is $\mathbb{R}P^2$, then $Y$ is a total L-space.  

Whenever we work explicitly with the Seifert invariants of a manifold, we will restrict to the case of base orbifold $S^2$.  We will write our Seifert invariants as $M(\frac{\beta_1}{\alpha_1},\ldots,\frac{\beta_m}{\alpha_m})$.  Here, we require that $\alpha_i, \beta_i$ are relatively prime, and if $\beta_i = 0$ then $\alpha_i = 1$. We will often use {\em normalized} Seifert invariants, as in \cite{EHN}, which take the form $M(b,\frac{\beta_1}{\alpha_1},\ldots,\frac{\beta_n}{\alpha_n})$ where $b \in \mathbb{Z}$ ($b$ is the {\em Euler number}) and $0 < \beta_i < \alpha_i$ for each $i$.  We will usually point out explicitly when we are using normalized Seifert invariants.

We now make our conventions more explicit in terms of orientations.  Suppose that $r,s$ are relatively prime positive integers.  If $0 < r' < r$, $0 < s' < s$, and $b$ are such that $brs + r's + s'r = -1$, then the Seifert structure $M(b,\frac{r'}{r},\frac{s'}{s})$ on $S^3$ has regular fibers given by the {\em positive} $(r,s)$-torus knots.  On the other hand, the Seifert structure $M(-b,-\frac{r'}{r},-\frac{s'}{s})$ on $S^3$ has regular fibers given by the $(-r,s)$-torus knots.  More generally, we have 
\begin{equation}\label{eqn:seifertorientation}
-M\left(b,\frac{\beta_1}{\alpha_1},\ldots,\frac{\beta_n}{\alpha_n}\right) = M\left(-b,-\frac{\beta_1}{\alpha_1},\ldots,-\frac{\beta_n}{\alpha_n}\right) = M\left(-n-b, \frac{\alpha_1 - \beta_1}{\alpha_1},\ldots, \frac{\alpha_n - \beta_n}{\alpha_n}\right).
\end{equation}   
Note that if $Y$ has normalized Seifert invariants  $M(b,\frac{\beta_1}{\alpha_1},\ldots,\frac{\beta_n}{\alpha_n})$, then $M(-n-b, \frac{\alpha_1 - \beta_1}{\alpha_1},\ldots, \frac{\alpha_n - \beta_n}{\alpha_n})$ give the normalized Seifert invariants of $-Y$.    

We will repeatedly use that \eqref{equiv:lo}, \eqref{equiv:ctf}, and \eqref{equiv:lspace} are equivalent for Seifert manifolds.  In fact, a stronger result holds.  Recall that a {\em horizontal foliation} of a Seifert manifold is a codimension one foliation which is transverse to the fibers.  By definition, such foliations are taut.  Further, for orientable Seifert fibered manifolds, a horizontal foliation is co-orientable if and only if the base orbifold is orientable \cite[Lemma 5.5]{BRW}.  For the following, we specialize the result to the case of closed, orientable manifolds.      

\begin{theorem}[\cite{BGW},\cite{BRW},\cite{LS}]\label{thm:sfequiv}
Let $Y$ be a closed, orientable Seifert fibered space.  Then, the following are equivalent:
\begin{enumerate}
\item $Y$ admits a co-orientable taut foliation, 
\item either $b_1(Y) = 0$, $Y$ has base orbifold $S^2$, and admits a horizontal foliation, or $b_1(Y) > 0$, 
\item $Y$ is not an L-space, 
\item $\pi_1(Y)$ is left-orderable.  
\end{enumerate}
\end{theorem}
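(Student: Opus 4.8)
The plan is to verify the equivalence by reducing to three mutually exclusive cases according to the first Betti number and the base orbifold of $Y$, since only one of them carries real content and the other two are bookkeeping. Throughout I would use freely that a horizontal foliation is by definition taut, and that, because $Y$ is orientable, such a foliation is co-orientable exactly when its base orbifold is orientable \cite[Lemma 5.5]{BRW}.

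First suppose $b_1(Y) > 0$. Then $Y$ is not a rational homology sphere, so it is not an L-space and (3) holds; (2) holds via its second clause; and since a closed orientable Seifert fibered space with $b_1 > 0$ is prime, the observation recorded after Theorem~\ref{thm:brw} (Gabai \cite{Gabai1} together with \cite[Corollary 3.4]{BRW}) gives that $Y$ admits a co-orientable taut foliation and has left-orderable fundamental group, so (1) and (4) hold as well. Thus all four conditions hold. Next suppose $b_1(Y) = 0$ with base orbifold $\mathbb{R}P^2$. Then, as recorded just before the theorem (combining \cite[Proposition 5]{BGW} and \cite[Theorem 1.3]{BRW}), $Y$ is a total L-space, so (1), (3), and (4) all fail, while (2) fails because its first clause requires base orbifold $S^2$ and its second requires $b_1 > 0$. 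Hence all four conditions fail, and the equivalence again holds.

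It remains to treat the essential case $b_1(Y) = 0$ with base orbifold $S^2$, so that $Y$ is a Seifert fibered rational homology sphere and (2) reduces to the assertion that $Y$ admits a horizontal foliation. I would close the cycle $(2) \Rightarrow (1) \Rightarrow (3) \Rightarrow (2)$ as follows. For $(2) \Rightarrow (1)$, a horizontal foliation is taut and, since $S^2$ is orientable, co-orientable by \cite[Lemma 5.5]{BRW}. For $(1) \Rightarrow (3)$, I invoke \cite[Theorem 1.4]{OSGenus}: a co-orientable taut foliation on a rational homology sphere forces it not to be an L-space. The substantive implication is $(3) \Rightarrow (2)$, which I would obtain by combining the combinatorial realizability criterion for horizontal foliations on Seifert fibered spaces \cite{EHN} (as completed by Jankins--Neumann and Naimi) with the Floer-theoretic computation of Lisca--Stipsicz \cite{LS}, which shows that precisely the same arithmetic condition on the normalized Seifert invariants governs failure of the L-space property; together these give that a non-L-space admits a horizontal foliation. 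Finally, the equivalence $(3) \Leftrightarrow (4)$ is the main result of \cite{BGW} for Seifert fibered rational homology spheres, whose left-orderability direction rests on building an action of $\pi_1(Y)$ on $\mathbb{R}$ out of the horizontal foliation by way of \cite{BRW}.

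I expect the genuine obstacle to lie entirely in the two deep external inputs just cited --- the Lisca--Stipsicz identification of the L-space condition with the Seifert-invariant inequality controlling horizontal foliations, and the Boyer--Gordon--Watson construction of left orders --- rather than in any new argument. The remaining task is the elementary case analysis above together with checking that all four properties are ultimately governed by the single realizability inequality on the normalized Seifert invariants of $Y$, which I would arrange so that the matching of combinatorial conditions is transparent.
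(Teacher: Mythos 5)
Your proposal is correct and takes essentially the same approach as the paper: the paper gives no independent proof of this theorem, stating it as a known result assembled from \cite{BGW}, \cite{BRW}, and \cite{LS} (with the Eisenbud--Hirsch--Neumann/Jankins--Neumann/Naimi realizability criterion recalled immediately afterwards as Theorem~\ref{thm:brwcondition}, and the $b_1>0$ and $\mathbb{R}P^2$-base cases disposed of exactly as you do, via Gabai plus \cite[Corollary 3.4]{BRW} and via the total-L-space observation preceding the theorem). Your assignment of the individual implications --- horizontal $\Rightarrow$ co-orientable taut by \cite[Lemma 5.5]{BRW}, taut $\Rightarrow$ non-L-space by \cite[Theorem 1.4]{OSGenus}, non-L-space $\Rightarrow$ horizontal by Lisca--Stipsicz matched against the Seifert-invariant inequality, and the left-orderability equivalence from \cite{BGW}/\cite{BRW} --- is precisely how the cited literature fits together.
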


Note that if $b_1(Y) = 0$ and any of the conditions of Theorem~\ref{thm:sfequiv} are satisfied, then $Y$ has base orbifold $S^2$.  A key part of the proof of Theorem~\ref{thm:sfequiv} comes from the classification of Seifert manifolds admitting horizontal foliations, due to Eisenbud-Hirsch-Neumann, Jankins-Neumann, and Naimi.  We state only the case for closed, orientable manifolds with base orbifold $S^2$.  

\begin{theorem}[\cite{EHN, JN, Naimi1994}]\label{thm:brwcondition}
Let $Y$ be a Seifert fibered space with base orbifold $S^2$ and normalized Seifert invariants $M(b,\frac{\beta_1}{\alpha_1}, \ldots, \frac{\beta_n}{\alpha_n})$ where $n \geq 3$ (and thus $0 < \beta_j < \alpha_j$).  Then, $Y$ admits a horizontal foliation if and only if one of the following holds: 
\begin{enumerate}
\item \label{condition:fibers} $-(n-2) \leq b \leq -2$,
\item \label{condition:-1} $b = -1$ and there are integers $0 < a < m$ such that after some permutation of the $\frac{\beta_j}{\alpha_j}$, we have that $\frac{\beta_1}{\alpha_1} < \frac{a}{m}$, $\frac{\beta_2}{\alpha_2} < \frac{m-a}{m}$, and $\frac{\beta_j}{\alpha_j} < \frac{1}{m}$ for $3 \leq j \leq n$, 
\item \label{condition:-(n-1)} $b = -(n-1)$ and \eqref{condition:-1} holds for $-Y = M(-1, \frac{\alpha_1 - \beta_1}{\alpha_1}, \ldots, \frac{\alpha_n-\beta_n}{\alpha_n})$.  
\end{enumerate}
\end{theorem}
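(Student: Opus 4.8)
The plan is to reduce the existence of a horizontal foliation to a realizability problem for products in the group $\widetilde{\operatorname{Homeo}}_+(S^1)$, following Eisenbud--Hirsch--Neumann, and then to invoke the sharp combinatorial criterion of Jankins--Neumann and Naimi to pin down the boundary cases.

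First I would set up the holonomy dictionary. Removing open fibered neighborhoods of the $n$ exceptional fibers exhibits the complement as a circle bundle over the $n$-punctured sphere $\Sigma_{0,n}$. A foliation transverse to the circle fibers restricts there to a flat structure, i.e.\ a holonomy representation $\rho \colon \pi_1(\Sigma_{0,n}) \to \operatorname{Homeo}_+(S^1)$; writing $x_1,\dots,x_n$ for the boundary loops with $x_1\cdots x_n = 1$, the element $h_i := \rho(x_i)$ records the holonomy around the $i$-th puncture. I would then analyze the local condition for the foliation to extend transversally across the reglued solid torus of type $(\alpha_i,\beta_i)$: this forces $h_i$ to be conjugate into the rotation subgroup, with rotation number prescribed by $\beta_i/\alpha_i$. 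Passing to the universal cover and lifting, the global compatibility---equivalently, the value of the Euler number $b$---becomes the single equation $\tilde h_1 \cdots \tilde h_n = T^{-b}$ in $\widetilde{\operatorname{Homeo}}_+(S^1)$, where $T$ is the unit translation and each $\tilde h_i$ is a lift of an element conjugate to the rotation by $\beta_i/\alpha_i$. Thus a horizontal foliation exists if and only if this constrained product can be realized.

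With this reformulation, the Milnor--Wood inequality bounds the translation number of a product by the number of factors, which immediately yields the outer range \eqref{condition:fibers}: when $-(n-2)\le b \le -2$ there is enough slack to realize the product using honest rotations (equivalently, a transverse foliation can be built directly), so existence is automatic, and this is the easy part. The two boundary values $b=-1$ and $b=-(n-1)$ saturate the Milnor--Wood bound, and here realizability is delicate: one cannot use rotations alone and must build genuinely nonlinear circle homeomorphisms whose lifts compose to the extremal translation. This sharp realization question is exactly the content of the Jankins--Neumann conjecture, reduced by them to the combinatorial inequalities on the $\beta_j/\alpha_j$ recorded in \eqref{condition:-1} and settled in the remaining cases by Naimi. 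I would cite \cite{EHN, JN, Naimi1994} for this step rather than reprove it, as it is the genuine analytic and number-theoretic heart of the theorem and the step I expect to be the main obstacle.

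Finally, I would dispose of the case $b=-(n-1)$ using the orientation-reversal symmetry of \eqref{eqn:seifertorientation}: a horizontal foliation on $Y$ exists if and only if one exists on $-Y$ (reverse the co-orientation of the leaves), and $-Y$ has normalized invariants with Euler number $-n-b$ and angles $(\alpha_j-\beta_j)/\alpha_j$. Under this duality the value $b=-(n-1)$ for $Y$ corresponds to Euler number $-1$ for $-Y$, so \eqref{condition:-(n-1)} is precisely the assertion that \eqref{condition:-1} holds for $-Y$, completing the classification.
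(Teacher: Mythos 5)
You should first note that the paper contains no proof of Theorem~\ref{thm:brwcondition} at all: it is quoted as a known result, and every later use of it is by direct appeal to conditions \eqref{condition:fibers}--\eqref{condition:-(n-1)}. Your proposal defers the two substantive ingredients --- the Eisenbud--Hirsch--Neumann holonomy dictionary reducing horizontal foliations to the product equation $\tilde h_1\cdots \tilde h_n = T^{-b}$ in $\widetilde{\operatorname{Homeo}}_+(S^1)$, and the sharp realization criterion at the extremal Euler numbers --- to exactly the same references \cite{EHN,JN,Naimi1994}, and your reduction of the case $b=-(n-1)$ to the case $b=-1$ via the orientation reversal \eqref{eqn:seifertorientation} is the correct and standard bookkeeping. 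So in substance your route is the paper's route: cite the literature for the hard analytic and number-theoretic content.

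The one step you argue on your own, however, is wrong as written. In the range \eqref{condition:fibers} you claim the product equation can be solved ``using honest rotations.'' It cannot, in general: a product of rotations is a rotation, so any choice of lifts composes to translation by $\sum_j \beta_j/\alpha_j$ plus an integer, and this can equal the integer $-b$ only if $\sum_j \beta_j/\alpha_j$ is itself an integer, which the normalized invariants $0<\beta_j/\alpha_j<1$ do not generally satisfy. For instance $M(-2,\frac{1}{2},\frac{1}{3},\frac{1}{3},\frac{1}{3})$ satisfies \eqref{condition:fibers} (here $n=4$ and $b=-2$), yet $\sum_j \beta_j/\alpha_j = \frac{3}{2}$ is not an integer, so no choice of rotations realizes it even though a horizontal foliation exists. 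Relatedly, the Milnor--Wood inequality is only an obstruction: it confines $b$ to an a priori range but does not ``immediately yield'' existence anywhere inside that range; sufficiency for $-(n-2)\le b\le -2$ requires the elementary but genuine construction with non-rotation homeomorphisms carried out in \cite{EHN}. Since that construction is contained in the references you already cite, this is a repairable misattribution rather than a fatal flaw, but as stated that step of your argument fails.
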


\begin{remark}
In \cite{BRW}, the condition that $a$ and $m$ be relatively prime is also included.  However, this condition is easily shown to be redundant.
\end{remark}

Given an explicit Seifert manifold, Theorem~\ref{thm:brwcondition} provides a concrete means of determining whether it is excellent or if it is a total L-space.  Note that the horizontal foliations guaranteed by Theorem~\ref{thm:brwcondition} are necessarily co-orientable; further, in Theorem~\ref{thm:brwcondition}, there are no assumptions on the first Betti number.  
Using this, we can easily understand the relationship between foliations on Seifert manifolds with torus boundary and foliations on their Dehn fillings.  As discussed above, if $\bfalpha$ is an excellent multislope on $M$, then $M(\bfalpha)$ is an excellent manifold.  The converse holds in many cases for Seifert manifolds.  
\begin{lemma}\label{lem:seifertboundary}
Let $M$ be an orientable Seifert fibered manifold with boundary tori $T_1, \ldots, T_n$ and let $\bfalpha$ be a multislope on $M$.  If $M(\bfalpha)$ is Seifert fibered over $S^2$ and admits a horizontal foliation, then $\bfalpha$ is an excellent multislope on $M$ and $M(\bfalpha)$ is an excellent manifold. 
\end{lemma}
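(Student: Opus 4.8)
The plan is to split the statement into its two genuinely different parts: the assertions that $M(\bfalpha)$ is excellent and that $\bfalpha$ is an LO multislope are essentially formal, whereas the CTF condition is the only part carrying geometric content. For the formal part, a horizontal foliation is taut by definition, and since the base orbifold $S^2$ is orientable it is co-orientable by \cite[Lemma 5.5]{BRW}. Hence $M(\bfalpha)$ admits a co-orientable taut foliation, and Theorem~\ref{thm:sfequiv} then forces $\pi_1(M(\bfalpha))$ to be left-orderable. By the definitions of LO multislope and of excellent manifold, this already shows that $\bfalpha$ is an LO multislope and that $M(\bfalpha)$ is excellent. It remains only to prove that $\bfalpha$ is a CTF multislope, that is, that $M$ itself carries a co-orientable taut foliation meeting each $T_i$ transversely in circles of slope $\alpha_i$.

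To produce such a foliation I would recover $M$ from $M(\bfalpha)$ by drilling. Write $\mathcal{G}$ for the horizontal foliation on $M(\bfalpha)$ and let $V_i$ be the solid torus filled in along $\alpha_i$. The idea is to arrange the core of each $V_i$ to be a Seifert fiber, delete a fibered neighborhood of each such core to return to $M$, and restrict $\mathcal{G}$. Since $\mathcal{G}$ is transverse to the fibers, its restriction is again a horizontal foliation of $M$, hence taut, and it inherits the co-orientation of $\mathcal{G}$. Because the boundary tori $T_i$ are vertical while $\mathcal{G}$ is transverse to the fibers lying in them, $\mathcal{G}$ meets each $T_i$ transversely; and in the local model of a fibered solid torus a leaf of a horizontal foliation meets the boundary in the curve bounding a meridian disk of $V_i$, which is exactly the filling slope. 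Thus the restricted foliation meets $T_i$ in circles of slope $\alpha_i$, which is the CTF condition.

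The main obstacle is justifying that the cores of the $V_i$ really are fibers of the fibration with respect to which $\mathcal{G}$ is horizontal, and this has two ingredients. First I would rule out $\alpha_i$ equalling the fiber slope $\phi_i$: filling a Seifert fibered manifold along the fiber slope of a boundary component produces a reducible manifold or a lens space, neither of which admits a taut foliation, contradicting the existence of $\mathcal{G}$; hence $\alpha_i \neq \phi_i$ for every $i$, and the Seifert fibration of $M$ extends across each $V_i$ with the core as a (possibly exceptional) fiber. Second, I must know that this extended fibration agrees, up to isotopy, with the fibration making $\mathcal{G}$ horizontal. This is where uniqueness of the Seifert fibration is needed: the closed orientable Seifert fibered spaces admitting non-isotopic fibrations are lens spaces, $S^1 \times S^2$, and prism manifolds (together with the flat manifolds), and the first three are L-spaces or reducible and so cannot support the taut foliation $\mathcal{G}$. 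I therefore expect the argument to come down to checking that no relevant manifold lies in the exceptional list, after which the two fibrations coincide, the drilling is legitimate, and the slope computation above completes the proof; the flat cases, which are the only residual subtlety, can be inspected directly.
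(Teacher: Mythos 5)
Your proposal is correct and takes essentially the same route as the paper: the excellence of $M(\bfalpha)$ and the LO multislope condition follow formally from Theorem~\ref{thm:sfequiv}, and the CTF condition is obtained by restricting the horizontal foliation on $M(\bfalpha)$ to $M$, whose boundary it meets in meridian circles of the filling solid tori, i.e.\ in the slopes $\alpha_i$. The only difference is one of emphasis: the paper simply asserts that the cores of the Dehn fillings are fibers of the Seifert fibration (hence transverse to the foliation), while you supply the supporting justification --- ruling out fiber-slope fillings via tautness and invoking uniqueness of Seifert fibrations --- which the paper leaves implicit.
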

\begin{proof}
Our assumptions guarantee that $M(\bfalpha)$ is an excellent manifold by Theorem~\ref{thm:sfequiv}.  By definition of excellence, $\bfalpha$ is an LO slope on $M$.  It remains to show that $\bfalpha$ is a CTF slope for $M$.  Consider a horizontal foliation $\mathcal{F}_0$ on $M(\bfalpha)$.  Since the base orbifold is $S^2$, $\mathcal{F}_0$ is co-orientable.  Because the cores of the Dehn fillings are fibers in the Seifert fibration, they are necessarily transverse to the leaves of the horizontal foliation.  The desired co-orientable foliation on $M$ intersecting each $T_i$ in simple closed curves of slope $\alpha_i$ is simply the restriction of $\mathcal{F}_0$ to $M$.         
\end{proof}

\subsection{Surgery on torus links}
Many of the cyclic branched covers  that we will encounter later on will contain the exterior of a torus link.  Therefore, we are interested in which multislopes on torus link exteriors are excellent.  


\begin{proposition}\label{prop:negtorus}
Let $k_1,\ldots,k_d$ be integers which are at least 2 and let $r,s$ be relatively prime, positive integers.  Here, we allow $d = 1$ only when $r$ and $s$ are both at least 2 and we allow $r = s = 1$ only when $d \geq 3$.  Then, $-\bfk = (-k_1,\ldots,-k_d)$ is an excellent multislope on the exterior of the torus link $T_{dr,ds}$.  
\end{proposition}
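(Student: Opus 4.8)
The plan is to show that the filled manifold $M(-\bfk)$, where $M$ denotes the exterior of $T_{dr,ds}$, is Seifert fibered over $S^2$ and admits a horizontal foliation. Excellence of the multislope $-\bfk$ and of $M(-\bfk)$ then follows immediately from Lemma~\ref{lem:seifertboundary} (and Theorem~\ref{thm:sfequiv}). So once the Seifert structure is set up, the entire proof reduces to verifying the Eisenbud--Hirsch--Neumann / Jankins--Neumann / Naimi condition of Theorem~\ref{thm:brwcondition}.

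First I would pin down the Seifert structure. The link $T_{dr,ds}$ consists of $d$ parallel regular fibers of the Seifert fibration of $S^3$ whose generic fiber is the positive $(r,s)$-torus knot, so $M$ is Seifert fibered over the $d$-holed sphere with two cone points of orders $r$ and $s$ (fewer when $r$ or $s$ equals $1$). Using the orientation conventions of \eqref{eqn:seifertorientation}, I would write $S^3 = M(-1;\tfrac{r'}{r},\tfrac{s'}{s})$; a short inverse-mod computation shows the normalized invariants always have $b=-1$ and satisfy $r's+s'r=rs-1$, equivalently $\tfrac{r'}{r}+\tfrac{s'}{s}=1-\tfrac{1}{rs}$. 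The fiber slope of a positive torus knot is $rs$, so $-k_i$ is never the fiber slope and the filling stays Seifert fibered over $S^2$; a Moser-type computation (tracking the meridian in the section--fiber basis) shows each $-k_i$-filling introduces an exceptional fiber of order $k_i+rs$ with invariant $\tfrac{1}{k_i+rs}$ while leaving $b=-1$, giving
\[
M(-\bfk) = M\Big(-1;\tfrac{r'}{r},\tfrac{s'}{s},\tfrac{1}{k_1+rs},\ldots,\tfrac{1}{k_d+rs}\Big),
\]
with the $\tfrac{r'}{r},\tfrac{s'}{s}$ terms present exactly when $r,s\ge 2$. I would confirm this presentation by checking that $|H_1|$ equals the determinant of the linking matrix (diagonal entries $-k_i$, off-diagonal entries $rs$) and that the Euler numbers add up correctly. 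Note that in every allowed case the number of exceptional fibers is at least $3$, as Theorem~\ref{thm:brwcondition} requires.

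Second, since $b=-1$, I would verify condition \eqref{condition:-1} of Theorem~\ref{thm:brwcondition}. The two cone-point invariants are the only ones too large to satisfy a $<\tfrac1m$ bound, so they must occupy the two distinguished slots; using $\tfrac{r'}{r}+\tfrac{s'}{s}=1-\tfrac{1}{rs}$ the requirement becomes finding a fraction $\tfrac{a}{m}$ in the open interval $\big(\tfrac{r'}{r},\,\tfrac{s-s'}{s}\big)$ with $m<\min_i(k_i+rs)$ for the remaining fibers. The key trick is to take $\tfrac{a}{m}$ to be the mediant of the endpoints, $a=r'+(s-s')$ and $m=r+s$: the mediant lies strictly between, so $\tfrac{r'}{r}<\tfrac{a}{m}$ and $\tfrac{s'}{s}<\tfrac{m-a}{m}$, while $(r-1)(s-1)\ge 0$ gives $m=r+s\le rs+1<k_i+rs$ (using $k_i\ge 2$), so the small fibers satisfy $\tfrac{1}{k_i+rs}<\tfrac1m$. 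In the degenerate cases $r=1$ or $s=1$ (forcing $d\ge 2$) and $r=s=1$ (forcing $d\ge 3$) there are only one or zero cone points; there I would instead take $\tfrac{a}{m}=\tfrac{s}{s+1}$ (pairing the lone cone point against one small fiber) or $\tfrac{a}{m}=\tfrac12$, respectively, and the same inequalities go through.

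I expect the main obstacle to be the bookkeeping of the Seifert invariants with the correct signs and the correct value $b=-1$: orientation conventions for Seifert fillings are easy to get wrong, and the whole application of Theorem~\ref{thm:brwcondition} depends on the manifold genuinely sitting in the $b=-1$ case with the cone-point fractions summing to $1-\tfrac{1}{rs}$. The identity $r's+s'r=rs-1$ and the bound $r+s\le rs+1$ are exactly what place the mediant inside the required interval, so I would establish these two facts carefully before assembling the argument.
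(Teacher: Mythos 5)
Your proposal is correct and takes essentially the same route as the paper: realize $T_{dr,ds}$ as $d$ parallel regular fibers of a Seifert fibration of $S^3$, compute that the filled manifold is $M(-1,\frac{r'}{r},\frac{s'}{s},\frac{1}{k_1+rs},\ldots,\frac{1}{k_d+rs})$ (exactly the paper's invariants, including the separate degenerate cases $r=1$ and $r=s=1$), verify Condition~\eqref{condition:-1} of Theorem~\ref{thm:brwcondition}, and conclude via Lemma~\ref{lem:seifertboundary}. The only difference is the numerical witness in the main case $r,s\ge 2$—you use the mediant $(a,m)=(r'+s-s',\,r+s)$ where the paper takes $(a,m)=(s'r+1,\,rs+1)$—and both choices satisfy the required inequalities, yours because $r+s\le rs+1<k_i+rs$ for $k_i\ge 2$.
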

\begin{proof}
By Lemma~\ref{lem:seifertboundary}, it suffices to show that $S^3_{-\bfk}(T_{dr,ds})$ is a Seifert fibered space with base orbifold $S^2$ and admits a horizontal foliation.   
We do this by explicitly computing the Seifert invariants of $(-k_1,\ldots,-k_d)$-surgery on $T_{dr,ds}$ and applying Theorem~\ref{thm:brwcondition}.  To do this, we first would like to find a Seifert structure on $S^3$ such that $T_{dr,ds}$ consists of a collection of regular fibers, each isotopic to $T_{r,s}$.  We consider three cases.  The first case is that $r,s \geq 2$.  

Following \cite{NR}, we consider the Seifert fibration, $M(0,\frac{\beta_1}{r}, \frac{\beta_2}{s})$, of $S^3$, where $\beta_1s + \beta_2r = -1$.  We choose $\beta_2$ so that $0 < \beta_2 < s$.  Then $\beta'_1 = \beta_1 + r$ satisfies $0 < \beta'_1 < r$, and the normalized Seifert invariants are now $M(-1,\frac{\beta'_1}{r},\frac{\beta_2}{s})$.  The torus link $T_{dr,ds}$ consists of $d$ parallel regular fibers in the Seifert fibration, $K_1,\ldots,K_d$.  Let $\mu_i$ and $\lambda_i$ denote the meridian and longitude of $K_i$ respectively, and let $\varphi_i$ denote the fiber slope on $\partial N(K_i)$.  

We study the result of Dehn filling the boundary of the exterior of $T_{dr,ds}$ by the slopes $\gamma_i = a_i\mu_i + b_i\varphi_i$.  By our orientation conventions, $-\mu_i,\varphi_i$ gives an oriented section-fiber basis for each boundary torus of the link exterior.   By construction, as long as $a_i \neq 0$ for all $i$, the filled manifold has Seifert invariants  
\[
M\left (-1,\frac{\beta'_1}{r}, \frac{\beta_2}{s}, -\frac{b_1}{a_1},\ldots, -\frac{b_d}{a_d}\right) .  
\]          
Recall that we are interested in $(-k_1,\ldots,-k_d)$-surgery.  This corresponds to filling the boundary tori by slopes $-k_i \mu_i + \lambda_i$.  Note that the longitude $\lambda_i$ satisfies $\varphi_i = \lambda_i + rs \mu_i$.  This can be seen, for instance, by noting that the linking number of the $(r,s)$-torus knot with a parallel regular fiber is $rs$.  Therefore, $-k_i \mu_i + \lambda_i = (-k_i - rs) \mu_i + \varphi_i$.  We conclude, if $k_1,\ldots,k_d > 0$, that the normalized Seifert invariants for $(-k_1,\ldots,-k_d)$-surgery are given by  
\[
S^3_{-\bfk}(T_{dr,ds}) = M \left(-1, \frac{\beta'_1}{r}, \frac{\beta_2}{s}, \frac{1}{k_1+rs},\ldots, \frac{1}{k_d+rs} \right).
\]
Let $m = rs+1$ and $a = \beta_2 r + 1$.  Then $\frac{\beta_2}{s} < \frac{a}{m}$, and 
\[
\frac{\beta'_1}{r} = \frac{rs - 1 - \beta_2 r}{rs} = \frac{m - a -1}{m-1} < \frac{m-a}{m}.  
\]
Also, if $k_i \geq 2$, then $\frac{1}{k_i+rs} < \frac{1}{m}$.  Thus, Condition \eqref{condition:-1} of Theorem~\ref{thm:brwcondition} holds, and we can conclude that $S^3_{-\bfk}(T_{dr,ds})$ admits a horizontal foliation.   This completes the proof for the case that both $r$ and $s$ are at least 2.  

The next case is when exactly one of $r$ or $s$ is 1.  Without loss of generality, $r = 1$ and thus, by assumption, $s \geq 2$ and $d \geq 2$.  We consider the Seifert structure $M(-1,\frac{s-1}{s})$ on $S^3$.  In this case $T_{d,ds}$ is given by the union of $d$ parallel regular fibers, $K_1,\ldots,K_d$ (each fiber is an unknot).  By arguments similar to those in the previous case, we compute the normalized Seifert invariants for $S^3_{-\bfk}(T_{d,ds})$ to be
\[
S^3_{-\bfk}(T_{d,ds}) = M\left(-1, \frac{s-1}{s}, \frac{1}{k_1 + s}, \ldots, \frac{1}{k_d + s}\right).  
\]
Since $d \geq 2$, $S^3_{-\bfk}(T_{d,ds})$ has at least three singular fibers.  Because $k_1,\ldots,k_d \geq 2$, we see that Condition~\eqref{condition:-1} of Theorem~\ref{thm:brwcondition} is satisfied by choosing $a = 1$ and $m = s+1$.  

The final case to consider is $r = s = 1$ and $d \geq 3$.  This case is similar to the previous one.  We have that $T_{d,d}$ is a collection of $d$ parallel regular fibers in the Seifert structure $M(-1)$ on $S^3$.  Therefore, we compute the Seifert invariants for $S^3_{-\bfk}(T_{d,d})$ to be
\[
S^3_{-\bfk}(T_{d,d}) = M\left(-1, \frac{1}{k_1 +1}, \ldots, \frac{1}{k_d + 1} \right).  
\] 
Since $d \geq 3$, $S^3_{-\bfk}(T_{d,d})$ has at least three singular fibers.  We again see that because $k_1, \ldots, k_d \geq 2$, Condition~\eqref{condition:-1} of Theorem~\ref{thm:brwcondition} is satisfied by choosing $a = 1$ and $m = 2$.
This completes the proof.  
\end{proof}

\section{Branched covers of torus knots}\label{sec:torus}

Since the cyclic branched cover of a torus knot is Seifert fibered, to determine if it is an excellent manifold our general strategy will be to check the existence of horizontal foliations by Theorem~\ref{thm:brwcondition} and appeal to Theorem~\ref{thm:sfequiv}.  

Thus, in order to do this, we must compute the Seifert invariants of these manifolds.  In fact, these have been explicitly calculated by N\'u\~{n}ez and Ram\'irez-Losada \cite[Theorem 1]{NR}.  Due to its length, we do not state it here.  However, we will use their result throughout, so we refer the reader to \cite{NR} for the precise statements.  We will switch between the Seifert invariants of $\Sigma_n(T_{p,q})$ and those of $-\Sigma_n(T_{p,q})$ by \eqref{eqn:seifertorientation} to make use of both Conditions~\eqref{condition:-1} and \eqref{condition:-(n-1)} of Theorem~\ref{thm:brwcondition}.  

We begin with a quick lemma that will help simplify our case analysis.  

\begin{lemma}\label{lem:excellentbranchedseifert}
Suppose that $\Sigma_r(T_{p,q})$ is an excellent manifold and let $r$ divide $n$.  Then $\Sigma_n(T_{p,q})$ is an excellent manifold.  
\end{lemma}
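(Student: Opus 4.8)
The plan is to transfer left-orderability upward along the branched covering map, and then to obtain the taut foliation for free from the Seifert structure via Theorem~\ref{thm:sfequiv}, so that no foliation needs to be constructed by hand.

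First, since $r$ divides $n$, the branched covering projection $p_n$ factors through a map $p_{n,r}\colon \Sigma_n(T_{p,q}) \to \Sigma_r(T_{p,q})$, and as recorded in Section~\ref{subsec:branchedcovers} this map has non-zero degree. Because $\Sigma_r(T_{p,q})$ is excellent, $\pi_1(\Sigma_r(T_{p,q}))$ is left-orderable. Moreover $\Sigma_n(T_{p,q})$ is a cyclic branched cover of the prime knot $T_{p,q}$, and hence is itself prime (and, being a rational homology sphere, irreducible); so Theorem~\ref{thm:brw} applies to the non-zero degree map $p_{n,r}$ and yields that $\pi_1(\Sigma_n(T_{p,q}))$ is left-orderable.

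It then remains to produce a co-orientable taut foliation on $\Sigma_n(T_{p,q})$. The key point is that $\Sigma_n(T_{p,q})$ is itself Seifert fibered --- it is precisely the class of manifolds whose invariants are computed in \cite{NR} --- so I can appeal to the equivalence of conditions in Theorem~\ref{thm:sfequiv} for closed, orientable Seifert fibered spaces. The left-orderability just established is condition (4), which is equivalent to condition (1), the existence of a co-orientable taut foliation. Thus $\Sigma_n(T_{p,q})$ both admits a co-orientable taut foliation and has left-orderable fundamental group, i.e.\ it is excellent.

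I expect the only genuine subtlety to lie in the foliation step, and it is conceptual rather than computational. The naive approach --- pulling a co-orientable taut foliation on $\Sigma_r(T_{p,q})$ back along $p_{n,r}$ --- is awkward, since $p_{n,r}$ is a branched rather than honest cover (branched along a Seifert fiber), so the preimage of a horizontal foliation is not manifestly a foliation near the branch locus. Routing through Theorem~\ref{thm:sfequiv} avoids this difficulty entirely: for a Seifert fibered manifold, left-orderability of the fundamental group already forces the existence of a co-orientable taut foliation, so the two defining conditions of excellence are obtained independently.
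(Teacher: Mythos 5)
Your proof is correct and follows essentially the same route as the paper: left-orderability is transferred up via the non-zero degree map $p_{n,r}$ together with Theorem~\ref{thm:brw} (using primeness of the branched cover of a prime knot), and the co-orientable taut foliation is then obtained from Theorem~\ref{thm:sfequiv} since $\Sigma_n(T_{p,q})$ is Seifert fibered. Your closing remark about why one should not try to pull back a foliation through the branched cover is a sound observation, and it is exactly the difficulty the paper's appeal to Theorem~\ref{thm:sfequiv} sidesteps.
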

\begin{proof}
By assumption, $\pi_1(\Sigma_r(T_{p,q}))$ is left-orderable.  As discussed in Section~\ref{subsec:branchedcovers}, there exists a non-zero degree map from $\Sigma_n(T_{p,q})$ to $\Sigma_r(T_{p,q})$.  Because $\Sigma_n(T_{p,q})$ is prime and $\pi_1(\Sigma_r(T_{p,q}))$ is left-orderable, Theorem~\ref{thm:brw} shows that $\pi_1(\Sigma_n(T_{p,q}))$ is left-orderable.  The result now follows from Theorem~\ref{thm:sfequiv}.  
\end{proof}

We now dispense with the easiest case of Theorem~\ref{thm:torus}; we state this case separately since we will regularly appeal to it in the proof for the general case.  
\begin{proposition}\label{prop:relativelyprime}
If $\gcd(n,pq) = 1$, then $\Sigma_n(T_{p,q})$ is excellent, unless $\{p,q,n\} = \{2,3,5\}$. 
\end{proposition}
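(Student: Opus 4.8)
The plan is to identify $\Sigma_n(T_{p,q})$ as the Brieskorn manifold $\Sigma(p,q,n)$ and reduce the question to the numerical criterion of Theorem~\ref{thm:brwcondition} by way of Theorem~\ref{thm:sfequiv}. First I would record the Seifert structure: when $\gcd(n,pq)=1$ and $p,q$ are coprime, the three integers $p,q,n$ are pairwise coprime, so by \cite{NR} the manifold $\Sigma_n(T_{p,q})$ is Seifert fibered with base orbifold $S^2(p,q,n)$, has exactly three exceptional fibers (of orders $p,q,n$, all at least $2$), and is an integer homology sphere; in particular $b_1=0$ and the base orbifold is $S^2$. By Theorem~\ref{thm:sfequiv}, it is then excellent if and only if it admits a horizontal foliation, which may be tested on its normalized Seifert invariants through Theorem~\ref{thm:brwcondition}.

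The organizing principle is the dichotomy governed by the sign of $\tfrac1p+\tfrac1q+\tfrac1n-1$. In the spherical case $\tfrac1p+\tfrac1q+\tfrac1n>1$ with $p,q,n\geq 2$ pairwise coprime, I would observe that the triple is forced to be $\{p,q,n\}=\{2,3,5\}$: the spherical triples are $(2,2,r)$, $(2,3,3)$, $(2,3,4)$, $(2,3,5)$, and only the last is pairwise coprime; moreover there is no pairwise-coprime Euclidean triple, since the triples $(2,3,6)$, $(2,4,4)$, $(3,3,3)$ achieving $\tfrac1p+\tfrac1q+\tfrac1n=1$ each share a common factor. For $\{2,3,5\}$ the manifold is the Poincar\'e homology sphere, whose fundamental group is the binary icosahedral group; being finite it is not left-orderable, so by Theorem~\ref{thm:sfequiv} this manifold is a total L-space. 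This accounts for the single exception.

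In every remaining case $\tfrac1p+\tfrac1q+\tfrac1n<1$, so the base orbifold is hyperbolic, and the task is to verify the horizontal-foliation condition of Theorem~\ref{thm:brwcondition}. Since there are exactly three exceptional fibers, Condition~(1) is vacuous ($-(n-2)\leq b\leq -2$ is empty for $n=3$), so after using the orientation identity \eqref{eqn:seifertorientation} to normalize the Euler number to $b=-1$, it suffices to exhibit integers $0<a<m$ realizing the three strict inequalities of Condition~(2) for the normalized invariants $\tfrac{\beta_i}{\alpha_i}$ supplied by \cite{NR}. The main obstacle is precisely this arithmetic step: producing $(a,m)$ uniformly in $p,q,n$. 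I expect to take $m$ comparable to the largest exceptional order and $a$ correspondingly, using the inequality $\tfrac1p+\tfrac1q+\tfrac1n<1$ to force all three strict comparisons; the estimate is tightest for the small hyperbolic triples such as $(2,3,7)$, where I would check the inequalities directly rather than appeal to the asymptotic bound.
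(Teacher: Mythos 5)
Your setup is sound and your treatment of the exceptional case is complete: identifying $\Sigma_n(T_{p,q})$ as a Seifert fibered integer homology sphere over $S^2$ with three exceptional fibers of orders $p,q,n$, observing that the only pairwise coprime spherical triple is $\{2,3,5\}$ and that no Euclidean triple is pairwise coprime, and disposing of the Poincar\'e sphere via finiteness of the binary icosahedral group, all match what is needed. The gap is in the hyperbolic case, which is where the entire content of the proposition lies: your write-up stops at a plan rather than a proof. Exhibiting a pair $(a,m)$ satisfying Condition~\eqref{condition:-1} of Theorem~\ref{thm:brwcondition} for \emph{every} pairwise coprime hyperbolic triple is a genuine arithmetic argument that must exploit the integer homology sphere relation among $b, \beta_1,\beta_2,\beta_3$; saying you ``expect to take $m$ comparable to the largest exceptional order'' does not produce such a pair, nor does the inequality $\frac1p+\frac1q+\frac1n<1$ by itself force the three strict comparisons. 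For instance, for $\Sigma(2,3,7)=M(-1,\frac12,\frac13,\frac17)$ one needs $\frac12<\frac am$, $\frac13<\frac{m-a}{m}$, and $\frac17<\frac1m$, which forces $m\le 6$ together with $\frac12<\frac am<\frac23$, solved by $(a,m)=(3,5)$; carrying this out uniformly in $(p,q,n)$ is exactly the step you flag as ``the main obstacle'' and then leave open, so as written the proof is incomplete.

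Note that the paper's own proof avoids this arithmetic entirely: after identifying the Seifert structure, it cites \cite[Corollary 3.12]{BRW}, which states precisely that the fundamental group of a Seifert fibered integer homology sphere with base $S^2$ and three singular fibers is left-orderable unless the manifold is the Poincar\'e homology sphere; Theorem~\ref{thm:sfequiv} then converts left-orderability into excellence. If you wish to keep your route through Theorem~\ref{thm:brwcondition} you are in effect reproving that corollary, and you must supply the general construction of $(a,m)$; alternatively, the gap closes in one line by citing it, since left-orderability, admitting a co-orientable taut foliation, and not being an L-space are all equivalent for these manifolds.
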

\begin{proof}
If $\gcd(n,pq) = 1$, then $\Sigma_n(T_{p,q})$ is a Seifert fibered integer homology sphere with base orbifold $S^2$ and three singular fibers of multiplicities $p$, $q$, and $n$.  It follows from \cite[Corollary 3.12]{BRW} that $\pi_1(\Sigma_n(T_{p,q}))$ is left-orderable unless $\{p,q,n\} = \{2,3,5\}$.  By Theorem~\ref{thm:sfequiv}, $\Sigma_n(T_{p,q})$ is excellent unless $\{p,q,n\} = \{2,3,5\}$.  
\end{proof}

With a view towards applying Lemma~\ref{lem:excellentbranchedseifert}, we establish the following.  
\begin{proposition}\label{prop:primefactor}
Suppose that $r$ divides either $p$ or $q$.  Then $\Sigma_r(T_{p,q})$ is excellent unless $r = 2$ and $\{p,q\} = \{2,k\}$ or $r = 2$ and $\{p,q\} = \{3,4\}$ or $r = 3$ and $\{p,q\} = \{2,3\}$.
\end{proposition}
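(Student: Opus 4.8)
The plan is to identify $\Sigma_r(T_{p,q})$ as a Seifert fibered rational homology sphere over $S^2$, read off its normalized Seifert invariants, and then run the criterion of Theorem~\ref{thm:brwcondition}, appealing to Theorem~\ref{thm:sfequiv} to conclude. By the symmetry $T_{p,q} = T_{q,p}$ I may assume $r \mid p$; since $\gcd(p,q) = 1$ this forces $\gcd(r,q) = 1$. The manifold $\Sigma_r(T_{p,q})$ is the Brieskorn manifold $\Sigma(p,q,r)$, Seifert fibered over $S^2$, and it is in fact a rational homology sphere: no $r$-th root of unity is a root of $\Delta_{T_{p,q}}$, since every such root is a $pq$-th root of unity that is neither a $p$-th nor a $q$-th root, while $r \mid p$ makes every $r$-th root of unity a $p$-th root. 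Thus $b_1 = 0$. Computing the isotropy of the natural circle action (equivalently, reading off \cite[Theorem 1]{NR}), the base orbifold is $S^2$ carrying $r$ cone points of order $q$ together with one cone point of order $p/r$, the last being absent when $p = r$.

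With the Seifert data in hand, Theorem~\ref{thm:sfequiv} reduces excellence to the existence of a horizontal foliation, hence to Theorem~\ref{thm:brwcondition}. I would first dispose of the spherical-base cases, where $\pi_1$ is finite, so $\Sigma_r(T_{p,q})$ is a total L-space and hence not excellent; these are isolated by the orbifold Euler characteristic. Exactly two exceptional fibers (a lens space) occurs precisely when $r = p = 2$, giving $\{p,q\} = \{2,k\}$. A spherical three-fiber orbifold forces either $r = 2$ with $S^2(q,q,p/r)$ of type $(3,3,2)$, i.e. $q = 3$ and $p = 4$, giving $\{p,q\} = \{3,4\}$, or $r = p = 3$ with $S^2(q,q,q)$ of type $(2,2,2)$, i.e. $q = 2$, giving $\{p,q\} = \{2,3\}$; the remaining spherical triangles cannot be realized because two of the three orders are forced equal (and the coprimality $\gcd(p,q)=1$ rules out the Euclidean borderline cases). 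Once there are at least four cone points, the orbifold Euler characteristic is negative. This recovers exactly the three claimed exceptions.

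Conversely I must show that every non-spherical case is genuinely excellent, and this is the main obstacle: knowing only that the base is hyperbolic is not enough, since a Seifert rational homology sphere over a hyperbolic base may still be an L-space. For instance $\Sigma(2,5,5) = \Sigma_5(T_{2,5})$ has base $S^2(2,2,2,2,2)$, and whether Condition~\eqref{condition:fibers} or \eqref{condition:-1} of Theorem~\ref{thm:brwcondition} holds is decided by the normalized Euler number $b$ rather than by the orbifold type. I would extract $b$ together with the residues $\beta_i/\alpha_i$ explicitly from \cite[Theorem 1]{NR} and verify that one of the conditions of Theorem~\ref{thm:brwcondition} is met, expecting Condition~\eqref{condition:fibers} in the cases with many order-$2$ fibers and Condition~\eqref{condition:-1} in general. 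A cleaner route handles the bulk: these manifolds are Dehn surgeries on the torus link $T_{r,p}$, and realizing $\Sigma_r(T_{p,q})$ as $(-(q-p/r),\ldots,-(q-p/r))$-surgery (with the $p=r$ case treated analogously) lets Proposition~\ref{prop:negtorus} perform the verification of Theorem~\ref{thm:brwcondition} and certify excellence whenever $q - p/r \geq 2$. The finitely many leftover small cases, which contain precisely the spherical exceptions above, I would then settle by the direct Seifert-invariant computation.
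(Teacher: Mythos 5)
Your setup is sound: reducing to $r \mid p$, reading off from \cite[Theorem 1]{NR} that the base orbifold is $S^2(p/r,q,\dots,q)$, isolating the three exceptions as exactly the spherical/finite-$\pi_1$ cases, and recognizing that a hyperbolic base is \emph{not} sufficient and that one must actually verify Theorem~\ref{thm:brwcondition}. The gap is in the step that was supposed to do that verification. The claimed identification of $\Sigma_r(T_{p,q})$ with $\bigl(-(q-p/r),\dots,-(q-p/r)\bigr)$-surgery on $T_{r,p}$ is false in general, even up to orientation. The two sides have different first homology: since $\Sigma_r(T_{p,q})$ has Seifert invariants $M\bigl(\tfrac{\beta_1}{p/r},\tfrac{\beta_2}{q},\dots,\tfrac{\beta_2}{q}\bigr)$ with $\beta_1q+\beta_2p=-1$, its Euler-number sum is $-\tfrac{r}{pq}$ and $|H_1|=q^{r-1}$; whereas the proposed surgery has linking matrix $-qI+\tfrac{p}{r}J$ ($J$ the all-ones matrix), hence $|H_1|=|p-q|\,q^{r-1}$ and Euler-number sum $\tfrac{r(p-q)}{pq}$. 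These agree (up to sign) only when $|p-q|=1$. Concretely, for $r=2$, $p=4$, $q=7$ your recipe gives $(-5,-5)$-surgery on $T_{2,4}$, with $\left|\det\begin{pmatrix} -5 & 2\\ 2 & -5\end{pmatrix}\right|=21$, while $|H_1(\Sigma_2(T_{4,7}))|=\det(T_{4,7})=7$; for $r=p=2$, $q=5$ it gives $(-4,-4)$-surgery on the Hopf link, $|H_1|=15$, while $\Sigma_2(T_{2,5})$ is a lens space with $|H_1|=5$. It is true that $\Sigma_r(T_{p,q})$ \emph{is} some Dehn filling of the $T_{r,p}$ exterior (delete the $r$ multiplicity-$q$ fibers), but the correct slopes are rational with denominator $q\pm1$ in general, so Proposition~\ref{prop:negtorus}, which only treats integer coefficients $-k_i$ with $k_i\geq 2$, cannot be invoked.

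Because the surgery route collapses, your final sentence inverts the actual difficulty: the cases needing the ``direct Seifert-invariant computation'' are not finitely many leftovers but the entire infinite family with $|p-q|\neq 1$, and settling them is the real content of the proposition. That is exactly what the paper does: it normalizes $M\bigl(\tfrac{\beta_1}{p/r},\tfrac{\beta_2}{q},\dots,\tfrac{\beta_2}{q}\bigr)$, shows $-r\leq b\leq -1$, applies Condition~\eqref{condition:fibers} of Theorem~\ref{thm:brwcondition} when $-r<b<-1$, and then handles the edge cases $b=-1$ and $b=-r$ (for $r<p$, and separately for $r=p$) by constructing explicit witnesses $(a,m)$ for Condition~\eqref{condition:-1}, e.g.\ $m=r^{N-1}$ with $N$ minimal such that $\beta_1<-p/r^N$, and a separate analysis of $p'=-1$ versus $p'\leq -2$ after reversing orientation; the three exceptions fall out of precisely these edge cases. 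Saying you would ``expect'' Condition~\eqref{condition:fibers} or \eqref{condition:-1} to hold is not a proof; the inequalities genuinely have to be established, and they fail exactly at the claimed exceptional triples.
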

\begin{proof}
Let $r$ divide $p$ or $q$.  Without loss of generality, we may assume that $r$ divides $p$.  Let $\beta_1$, $\beta_2$ be such that 
\begin{equation}\label{eqn:pq}
\beta_1 q + \beta_2 p = -1.  
\end{equation}
By case (2) of \cite[Theorem 1]{NR}, we have 
\begin{equation}\label{eqn:seifertbranched}
\Sigma_r(T_{p,q}) = M\left( \frac{\beta_1}{p/r}, \frac{\beta_2}{q},\ldots,\frac{\beta_2}{q}\right), 
\end{equation}
where there are $r$ fibers of the form $\frac{\beta_2}{q}$.  

We may choose $\beta_2$ such that $0 < \beta_2 < q$.  Then, by \eqref{eqn:pq}, we have 
\begin{equation}\label{eqn:beta1}
-(p-1) \leq \beta_1 \leq -1.
\end{equation}

Therefore, $\frac{r\beta_1}{p}$ satisfies 
\[
\frac{-(p-1)r}{p} \leq \frac{r\beta_1}{p} \leq \frac{-r}{p}.
\]
It follows that after normalizing the Seifert invariants, we have 
\begin{equation}\label{eqn:rb}
-r \leq b \leq -1.
\end{equation}

\noindent
{\bf Case 1:} $r < p$ \\
In this case, the number of exceptional fibers of $\Sigma_r(T_{p,q})$ is $(r+1)$.  By \eqref{eqn:rb}, unless $b = -1$ or $b = -r$, we have satisfied Condition~\eqref{condition:fibers} of Theorem~\ref{thm:brwcondition}, and therefore $\Sigma_r(T_{p,q})$ admits a horizontal foliation.  By Theorem~\ref{thm:sfequiv}, this manifold is excellent.  \\

{\bf Subcase (i):} $b = -1$ \\
Since $0 < \beta_2 < q$ and $b = -1$, we must have $-1 < \frac{r\beta_1}{p} < 0$, giving $\beta_1 > -\frac{p}{r}$ and $\frac{r\beta_1}{p} = -1 + \frac{p + r\beta_1}{p}$, where $0 < p + r\beta_1 < p$.  Thus, by \eqref{eqn:seifertbranched}, we have $\Sigma_r(T_{p,q}) = M(-1,\frac{\beta_1 + p/r}{p/r}, \frac{\beta_2}{q}, \ldots,\frac{\beta_2}{q})$.  Let $N$ be the least positive integer such that 
\[
\beta_1 < -\frac{p}{r^N}.
\]
Clearly we have $N \geq 2$.  
We first claim that $\frac{p + r\beta_1}{p} < \frac{r^{N-1} - 1}{r^{N-1}}$.  If not, then we would have 
\[
r^{N-1}(p + r\beta_1) \geq p(r^{N-1} - 1), 
\]
and thus $r^N \beta_1 \geq -p$.  This gives $\beta_1 \geq -\frac{p}{r^N}$, which is a contradiction.  

We next claim that $\frac{\beta_2}{q} < \frac{1}{r^{N-1}}$.  By \eqref{eqn:pq}, we see that $\frac{\beta_2}{q} < -\frac{\beta_1}{p}$.  If the claim were false, we would have $-\frac{\beta_1}{p} > \frac{1}{r^{N-1}}$, and so $\beta_1 < -\frac{p}{r^{N-1}}$.  This contradicts the choice of $N$.  

These two claims combine to show that Condition~\eqref{condition:-1} of Theorem~\ref{thm:brwcondition} is satisfied by taking $m = r^{N-1}$ and $a = 1$.  This shows that $\Sigma_r(T_{p,q})$ is excellent and completes the proof for Subcase (i).  \\

{\bf Subcase (ii):}  $b = -r$ \\
In this case, we will show that Condition~\eqref{condition:-(n-1)} of Theorem~\ref{thm:brwcondition} is satisfied for $\Sigma_r(T_{p,q})$, unless $r = 2$, $p = 4$, and $q = 3$.  In other words, we consider Condition~\eqref{condition:-1} of Theorem~\ref{thm:brwcondition} for $-\Sigma_r(T_{p,q})$.  To obtain the normalized Seifert invariants for $-\Sigma_r(T_{p,q}) = \Sigma_r(T_{-p,q})$, we use the condition
\begin{equation}\label{eqn:qp}
p' q + q' p = 1.
\end{equation}
Again, we choose $q'$ so that $0 < q' < q$.  We have that $-\Sigma_r(T_{p,q})$ has (not yet normalized) Seifert invariants $M(\frac{rp'}{p},\frac{q'}{q},\ldots,\frac{q'}{q})$.    

Since $\Sigma_r(T_{p,q})$ has $b = -r$ and $r+1$ singular fibers, we have that for $-\Sigma_r(T_{p,q})$, $b = -1$.   The condition $b = -1$ is equivalent to $-1 < \frac{rp'}{p} < 0$, and thus it follows that the normalized Seifert invariants for $-\Sigma_r(T_{p,q})$ are $M(-1,\frac{p' + p/r}{p/r},\frac{q'}{q},\ldots,\frac{q'}{q})$.  We can satisfy Condition~\eqref{condition:-1} of Theorem~\ref{thm:brwcondition} if we can show there exists $m \geq 2$ such that 
\[
\frac{q'}{q} < \frac{1}{m}, \; \; \;  \frac{p+rp'}{p} < \frac{m-1}{m},
\]
or equivalently,  
\begin{equation}\label{eqn:beta2q}
\frac{q'}{q} < \frac{1}{m}, \; \; \;  \frac{rp'}{p} < - \frac{1}{m}.
\end{equation}

$ \quad \; \;$ {\bf Subcase (a):} $p' \neq -1$ \\
In this case, $p' \leq -2$.  Let $m$ be the least positive integer such that the second inequality in \eqref{eqn:beta2q} holds, i.e.  $\frac{rp'}{p} < -\frac{1}{m}$.  Then $m \geq 2$ and 
\begin{equation}\label{eqn:rbeta1}
\frac{rp'}{p} \geq -\frac{1}{m-1}.
\end{equation}
Also, since $p' < -1$, we have $\frac{rp'}{p} < -\frac{1}{p/r}$, and therefore 
\begin{equation}\label{eqn:mpr}
m \leq \frac{p}{r},
\end{equation}
by the minimality of $m$.  Recall that we are interested in showing that the first inequality in \eqref{eqn:beta2q} holds for our choice of $m$.  Because 
\[
\frac{q'}{q} = \frac{1}{pq} - \frac{p'}{p} \leq \frac{1}{pq} + \frac{1}{r(m-1)}, 
\]
by \eqref{eqn:rbeta1}, it suffices to show that $\frac{1}{pq} + \frac{1}{r(m-1)} < \frac{1}{m}$, or equivalently that 
\[
\frac{m}{pq} < 1 - \frac{m}{r(m-1)}. 
\]
Now, by \eqref{eqn:mpr}, 
\[
\frac{m}{pq} \leq \frac{p/r}{pq} = \frac{1}{rq}.  
\]
Therefore, it is enough to show that $\frac{1}{rq} < 1 - \frac{m}{r(m-1)}$, i.e., that $\frac{1}{q} < r - \frac{m}{m-1} = (r-1) + \frac{1}{m-1}$.  Since $r \geq 2$, this is clearly true.  \\

$ \quad \; \;$ {\bf Subcase (b):} $p' = -1$ \\
In this case, $-q + q' p = 1$, giving 
\begin{equation}\label{eqn:bpq}
q' p = q + 1.
\end{equation}
Let $m = \frac{p}{r} + 1$.  Then, 
\[
\frac{rp'}{p} = \frac{-r}{p} = \frac{-1}{p/r} < \frac{-1}{m},   
\]
so the first inequality in \eqref{eqn:beta2q} holds. \\ \\
\noindent {\bf Claim:} $\frac{q'}{q} < \frac{1}{m}$ unless $r = 2$, $p = 4$, and $q = 3$.  \\ 
\begin{proof}[Proof of Claim]  Since $\frac{q'}{q} = \frac{q+1}{qp}$, we must show that 
\[
\frac{q+1}{qp} < \frac{1}{m} = \frac{1}{p/r + 1} = \frac{1}{p}\left( \frac{1}{\frac{1}{r} + \frac{1}{p}}\right), 
\]
or equivalently that $\frac{q}{q+1} > \frac{1}{r} + \frac{1}{p}$.  Note that if $q = 2$, \eqref{eqn:bpq} gives that $p = 3$, contradicting the fact that $r < p$.  Therefore $q \geq 3$ and we have $\frac{q}{q+1} \geq \frac{3}{4}$.  On the other hand, 
\[
\frac{1}{r} + \frac{1}{p} < \frac{1}{2} + \frac{1}{4} = \frac{3}{4},
\]
unless $r = 2$ and $p = 4$.  Finally, if $r = 2$ and $p = 4$, then $\frac{1}{r} + \frac{1}{p} = \frac{3}{4}$ and $\frac{q}{q+1} > \frac{3}{4}$ if $q \geq 5$.  This completes the proof of the claim.  
\end{proof}
Therefore, we have shown that Condition~\eqref{condition:-1} of Theorem~\ref{thm:brwcondition} is satisfied by choosing $m = \frac{p}{r} + 1$, excluding the case $r = 2$, $p = 4$, $q = 3$.  This completes the proof of the theorem in Case 1.  \\

\noindent {\bf Case 2:} $r = p$ \\
We will show that Condition \eqref{condition:-1} of Theorem~\ref{thm:brwcondition} is satisfied for either $\Sigma_p(T_{p,q})$ or $-\Sigma_p(T_{p,q})$, except in the cases $p = 2, r = 2$ or $p = 3, q = 2, r =3$.  

We return to the setup before Case 1.  From \eqref{eqn:seifertbranched}, $r = p$ implies the normalized Seifert invariants of $\Sigma_p(T_{p,q})$ are 
\begin{equation}\label{eqn:toruscase2}
\Sigma_p(T_{p,q}) = M\left(\beta_1,\frac{\beta_2}{q},\ldots,\frac{\beta_2}{q}\right),
\end{equation}
i.e. $\beta_1 =b$.  Thus, the number of exceptional fibers of $\Sigma_p(T_{p,q})$ is $p$.  If $p = 2$, then $\Sigma_p(T_{p,q})$ is a lens space, and thus $\pi_1(\Sigma_p(T_{p,q}))$ is finite.  Therefore, suppose $p \geq 3$.  By \eqref{eqn:beta1}, we have $-(p-1) \leq b \leq -1$.  Again, Condition~\eqref{condition:fibers} of Theorem~\ref{thm:brwcondition} is satisfied unless $b = -1$ or $b = -(p-1)$.  \\

{\bf Subcase (i):} $b = -1$ \\
From \eqref{eqn:toruscase2}, we have that $\Sigma_p(T_{p,q})$ has normalized Seifert invariants $M(-1,\frac{\beta_2}{q},\ldots,\frac{\beta_2}{q})$.  In this case, since $\beta_1 = -1$, \eqref{eqn:pq} implies that $\beta_2 = \frac{q-1}{p}$.  Therefore, $\frac{\beta_2}{q} = \frac{q-1}{pq} < \frac{1}{p}$.  Condition~\eqref{condition:-1} of Theorem~\ref{thm:brwcondition} is now satisfied by taking $m = p$ and $a = 1$.  \\

{\bf Subcase (ii):} $b = -(p-1)$ \\
In this case, \eqref{eqn:pq} implies that $\beta_2p = (p-1)q-1$, and hence $\frac{q-\beta_2}{q} = \left(\frac{q+1}{q} \right) \left( \frac{1}{p} \right)$.  Thus, after reversing orientation, we see from \eqref{eqn:seifertorientation} and \eqref{eqn:toruscase2} that $-\Sigma_p(T_{p,q})$ has normalized Seifert invariants 
\[
-\Sigma_p(T_{p,q}) = M\left(-1, \left(\frac{q+1}{q} \right) \frac{1}{p} ,\ldots,\left(\frac{q+1}{q} \right) \frac{1}{p} \right ).  
\]
Unless $p = 3$ and $q = 2$ (and consequently $r = 3$), Condition~\eqref{condition:-1} of Theorem~\ref{thm:brwcondition} is satisfied by taking $m = 2, a = 1$.  

We have now shown that $\Sigma_p(T_{p,q})$ is excellent when $r$ divides $p$ or $q$, except for the cases stated in the proposition.  
\end{proof}

With Proposition~\ref{prop:relativelyprime} and Proposition~\ref{prop:primefactor}, we are now ready to prove Theorem~\ref{thm:torus}.  

\begin{proof}[Proof of Theorem~\ref{thm:torus}]
We are interested in the general case of when $\Sigma_n(T_{p,q})$ is excellent for arbitrary $n$.  We first note that if $\pi_1(\Sigma_n(T_{p,q}))$ is finite, then this group is not left-orderable.  In this case, $\Sigma_n(T_{p,q})$ is a total L-space by Theorem~\ref{thm:sfequiv}. (This was originally established in \cite[Proposition 2.3]{OSLens}).  

By Proposition~\ref{prop:relativelyprime}, we have reduced the problem to the case $\gcd(n,pq) \neq 1$.  Without loss of generality, we can assume that $\gcd(n,p) \neq 1$.  By Proposition~\ref{prop:primefactor}, we established that if $n$ has a factor $r$ which divides $p$, then $\Sigma_r(T_{p,q})$ is excellent unless $p = 4, q = 3$ or $p = 3, q = 2$, or $p = 2$.  By Lemma~\ref{lem:excellentbranchedseifert}, we thus have that $\Sigma_n(T_{p,q})$ is excellent unless $p = 4, q = 3$ or $p = 3, q = 2$ or $p = 2$.  From now on, we assume that $(p,q)$ is of one of these three forms.  Write $n = 2^k 3^\ell 5^m w$, where $\gcd(w,30) = 1$.  First, suppose $w \neq 1$.  If $\gcd(q,w) = 1$, then $\Sigma_w(T_{p,q})$ is excellent from Proposition~\ref{prop:relativelyprime}, and consequently so is $\Sigma_n(T_{p,q})$ by Lemma~\ref{lem:excellentbranchedseifert}.  Now suppose $\gcd(q,w) \neq 1$ and let $s$ be a prime factor of $\gcd(q,w)$; note that in this case $s \geq 7$ and thus $q \geq 7$.  Thus, $\Sigma_s(T_{p,q})$ is excellent by Proposition~\ref{prop:primefactor}.  By Lemma~\ref{lem:excellentbranchedseifert}, $\Sigma_n(T_{p,q})$ is excellent.  Thus, we now assume $n = 2^k 3^\ell 5^m$.  We will compile a list of the remaining cases we must check by hand.  

Let $p = 4, q = 3$.  Note that $\Sigma_5(T_{3,4})$ is excellent by Proposition~\ref{prop:relativelyprime}.  Therefore, if $m \geq 1$, $\Sigma_n(T_{4,3})$ is excellent by Lemma~\ref{lem:excellentbranchedseifert}.  Thus, assume $m = 0$.  Note that since $\gcd(n,p) \neq 1$ by assumption, we must have that $k \geq 1$.  Further, we have that $\Sigma_3(T_{4,3})$ is excellent by Proposition~\ref{prop:primefactor}.  Consequently if $\ell \geq 1$, Lemma~\ref{lem:excellentbranchedseifert} shows that $\Sigma_n(T_{4,3})$ is excellent.  Thus, for the case of $p = 4, q = 3$, it remains to consider $n = 2^k$ for $k \geq 2$.  Thus, by Lemma~\ref{lem:excellentbranchedseifert}, it suffices to show that $\Sigma_4(T_{4,3})$ is excellent, since $\pi_1(\Sigma_2(T_{4,3}))$ is finite.  This case is handled by Proposition~\ref{prop:primefactor}.  

Now, we consider the case $p = 3, q = 2, n = 2^k 3^\ell 5^m$.  Since by assumption $\gcd(n,p) \neq 1$, we have $\ell \geq 1$.  Proposition~\ref{prop:relativelyprime} shows that $\Sigma_{25}(T_{3,2})$ is excellent, and thus $\Sigma_n(T_{3,2})$ is excellent if $m \geq 2$.  Observe that $\Sigma_n(T_{3,2})$ has finite fundamental group if $n = 3$.  On the other hand, if we can show $\Sigma_n(T_{3,2})$ is excellent for $n = 6$, $9$, and $15$, this will complete the proof for the case of $p = 3, q = 2$ by again applying Lemma~\ref{lem:excellentbranchedseifert}.  

Finally, suppose that $p = 2, n = 2^k 3^\ell 5^m$.  By assumption, $k \geq 1$.  The relevant triples with finite fundamental group are $(n,p,q) = (4,2,3)$ and $(2,2,q)$ for $q \geq 3$.  First, consider the case $q = 3$.  Arguments similar to those above show that it suffices to establish the excellence of $\Sigma_n(T_{2,3})$ for $n = 8$ and $10$.  Now suppose that $q \geq 5$.  If $\gcd(q,5) = 1$, then $\Sigma_5(T_{2,q})$ is excellent by Proposition~\ref{prop:relativelyprime}.  If $5$ divides $q$, then $\Sigma_5(T_{2,q})$ is excellent by applying Proposition~\ref{prop:primefactor}.  Thus, by Lemma~\ref{lem:excellentbranchedseifert}, if $m \geq 1$, then $\Sigma_n(T_{2,q})$ is excellent.  Thus, assume $n = 2^k 3^\ell$ with $k \geq 1$.  If $q \geq 7$, $\Sigma_3(T_{2,q})$ is excellent by applying either Proposition~\ref{prop:relativelyprime} or Proposition~\ref{prop:primefactor} with the roles of $p$ and $q$ reversed (dependent on whether $\gcd(3,q) = 1$).  Therefore, if $q \geq 7$ and $\ell \geq 1$, $\Sigma_n(T_{2,q})$ is excellent by Lemma~\ref{lem:excellentbranchedseifert}.  Thus, for $p = 2, q \geq 7$, it suffices to show that $\Sigma_4(T_{2,q})$ is excellent.  On the other hand, if $q = 5$, then it suffices to show $\Sigma_6(T_{2,5})$ and $\Sigma_4(T_{2,5})$ are excellent.  

We summarize the above discussion with the list of remaining cases for which we need to establish excellence to complete the proof of the theorem:  

\begin{enumerate}
\item $p= 2, q \geq 5, n = 4$ 
\item $p = 2, q = 3, n = 6$
\item $p = 2, q = 3, n = 8$
\item $p = 2, q = 3, n = 9$
\item $p = 2, q = 3, n = 10$
\item $p = 2, q = 3, n = 15$
\item $p = 2, q = 5, n = 6$.
\end{enumerate}

\noindent \\ {\bf Case 1:} $p= 2, q \geq 5, n = 4$ \\
To compute the Seifert invariants of $\Sigma_4(T_{2,q})$, we use case (3)(a) of \cite[Theorem 1]{NR}.  Write $q = 2k-1, k > 2$.  Let $q^* = -1$; then, $qq^* = 1-2k$, and so $k$ is as in \cite[Theorem 1]{NR}.  Also, we have $\beta_1 q + \beta_2 p = -1$, and so we can take $\beta_1 = 1, \beta_2 = -k$.  From \cite[Theorem 1]{NR}, we get that 
\[
\Sigma_4(T_{2,q}) = M\left(\frac{1}{2}, k, -\frac{k^2}{q}, -\frac{k^2}{q}\right).
\]  
Let $c = \lfloor \frac{k^2}{q} \rfloor + 1$.  Then, $b = k-2c$ and $\Sigma_4(T_{2,q})$ has normalized Seifert invariants
\begin{equation}\label{eqn:sigma42q}
\Sigma_4(T_{2,q}) = M\left(k-2c, \frac{1}{2}, c - \frac{k^2}{q}, c - \frac{k^2}{q}\right).  
\end{equation}
Note that
\[
c = \left \lfloor \frac{k^2}{2k-1} \right \rfloor + 1 = \left\{ \begin{array}{rl} \frac{k}{2} + 1, & k \text{ even} \\ \frac{k+1}{2}, & k \text{ odd}. \end{array}\right.
\]

$ \quad \; \;$ {\bf Subcase (i):} $k$ odd \\
Then $b = k - 2c = -1$.  Also, in this case
\begin{align*}
c - \frac{k^2}{q} = \frac{k+1}{2} - \frac{k^2}{q} & = \frac{q(k+1) - 2k^2}{2q} \\
& = \frac{k-1}{4k-2} \\
& < \frac{1}{4}.  
\end{align*}
Therefore, Condition~\eqref{condition:-1} of Theorem~\ref{thm:brwcondition} is satisfied by taking $m = 4, a = 1$.  \\

$ \quad \; \;$ {\bf Subcase (ii):} $k$ even \\
Here $b = k - (k+2) = -2$.  Also, in this case
\begin{align*}
1 - (c - \frac{k^2}{q}) = (1 - c) + \frac{k^2}{q} & = \frac{k^2}{q} - \frac{k}{2} \\
& = \frac{2k^2 - qk}{2q} \\
& = \frac{k}{4k-2}.   
\end{align*}
It follows from \eqref{eqn:seifertorientation} and \eqref{eqn:sigma42q} that $-\Sigma_4(T_{2,q})$ has normalized Seifert invariants $M(-1,\frac{1}{2}, \frac{k}{4k-2},\frac{k}{4k-2})$.  If $q \geq 5$, then $k > 2$ and so $\frac{k}{4k-2} < \frac{1}{3}$.  Condition~\eqref{condition:-1} of Theorem~\ref{thm:brwcondition} is therefore satisfied by taking $m = 3, a = 1$.  \\

\noindent {\bf Case 2:} $p = 2, q = 3, n = 6$ \\
By case (2) of \cite[Theorem 1]{NR}, we have that $H_1(\Sigma_6(T_{2,3}))$ is infinite.  Therefore, $\Sigma_6(T_{2,3})$ is excellent. \\

\noindent {\bf Case 3:} $p= 2, q = 3, n = 8$ \\
By case (3) of \cite[Theorem 1]{NR}, letting $\beta_1 = -1, \beta_2 = 1, k = 1$, and $3^* = -1$ as in \cite[Theorem 1]{NR}, we have 
\[
\Sigma_8(T_{2,3}) = M\left(-1,\frac{1}{4}, \frac{1}{3},\frac{1}{3}\right).
\]
Condition~\eqref{condition:-1} of Theorem~\ref{thm:brwcondition} holds with $m = 2, a = 1$. \\
 
\noindent {\bf Case 4:} $p = 2, q = 3, n = 9$ \\
By case (3)(a) of \cite[Theorem 1]{NR}, letting $\beta_1 = 1, \beta_2 = -1, k = 1,$ and $2^* = -1$ as in \cite[Theorem 1]{NR}, we have  
\[
\Sigma_9(T_{3,2}) = M\left(\frac{1}{3}, 1, -\frac{1}{2}, -\frac{1}{2}, -\frac{1}{2}\right) = M\left(-2,\frac{1}{3},\frac{1}{2},\frac{1}{2},\frac{1}{2}\right).  
\]
Therefore, $b = -2$ and there are four singular fibers.  Condition~\eqref{condition:fibers} of Theorem~\ref{thm:brwcondition} now holds.  \\

The remaining cases can be handled the same as in Case 3 or Case 4.  We leave these to the reader. 
\end{proof}

\section{Cables}\label{sec:cables}
Let $K$ be a knot in $S^3$, with regular neighborhood $N(K)$.  Let $C$ be a simple loop on $\partial N(C)$ with slope $p/q$, where $q\ge 2$.  Then $C$ is the {\em $(p,q)$-cable} of $K$, $C_{p,q}(K)$.  If $K$ is trivial then $C_{p,q} (K)$ is the torus knot $T_{p,q}$.  Since $C_{-p,q} (K) = C_{p,-q}(K) = - C_{p,q}(-K)$, we will always assume that $p \geq1$ and $q\ge 2$.

We recall the main theorem about cables that we are interested in proving.  \\ \\
{\bf Theorem~\ref{thm:cables}.}  {\em Let $K$ be a non-trivial knot in $S^3$.  Then, $\Sigma_n(C_{p,q}(K))$ is excellent, unless $n = q = 2$.} \\

In the case $n=q=2$, it turns out that $\Sigma_2 (C_{p,2} (K)) \cong X_1 \cup_\partial X_2$, where $X_i$ is a copy of the exterior $X$ of $K$, $i=1,2$.  For $p=1$ we get the following conditional result.

\begin{theorem}\label{thm:conditional}
If $\frac{1}{2}$ is a CTF slope for $K$ then $\Sigma_2 (C_{1,2} (K))$ is excellent.
\end{theorem}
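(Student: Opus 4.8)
The plan is to combine the splitting $\Sigma_2(C_{1,2}(K)) \cong X_1 \cup_\partial X_2$ recorded just above with the gluing and homology-sphere results of Section~\ref{sec:background}, after first observing that the ambient manifold is an integer homology sphere. Since $p=1$ the companion torus knot $T_{1,2}$ is unknotted, so the cabling formula for the Alexander polynomial gives $\Delta_{C_{1,2}(K)}(t) = \Delta_K(t^2)$, whence $|H_1(\Sigma_2(C_{1,2}(K)))| = |\Delta_{C_{1,2}(K)}(-1)| = |\Delta_K(1)| = 1$. Thus $\Sigma_2(C_{1,2}(K))$ is a $\mathbb{Z}$-homology sphere, and in particular any taut foliation it carries is automatically co-orientable. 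By Lemma~\ref{lem:toroidalzs3} it therefore suffices to produce a taut foliation; left-orderability, and hence excellence, then follows formally.

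To build the foliation I would work on the two copies $X_1, X_2$ of the exterior $X$ of $K$ and invoke the (trivial) foliation-gluing half of Lemma~\ref{lem:excellentgluing}. The key geometric point is to identify the gluing homeomorphism $f \colon \partial X_1 \to \partial X_2$ and to check that it carries the slope $\frac{1}{2}$ to the slope $\frac{1}{2}$. The natural tool is the cabling annulus $A \subset N(K)$, cobounded by the branch curve $C = C_{1,2}(K)$ and the curve of slope $\frac{1}{2} = \mu + 2\lambda$ on $\partial N(K) = \partial X$. Because $A$ meets the branch locus only along the single boundary component $C$, it lifts in the double branched cover to an annulus joining the two lifts of the $\frac{1}{2}$-curve, one on $\partial X_1$ and one on $\partial X_2$; tracking this lift shows precisely that $f$ sends the slope-$\frac{1}{2}$ curve on $\partial X_1$ to the slope-$\frac{1}{2}$ curve on $\partial X_2$. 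As a consistency check, one verifies on homology that the only gluings yielding a $\mathbb{Z}$-homology sphere send $\lambda_1$ to $\pm\mu_2 + d\lambda_2$, and the cabling annulus pins down the remaining indeterminacy so that $\frac{1}{2}$ is preserved.

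With $f$ so understood, the hypothesis that $\frac{1}{2}$ is a CTF slope for $K$ yields a co-orientable taut foliation $\mathcal{F}$ on $X$ meeting $\partial X$ transversely in circles of slope $\frac{1}{2}$. Transporting $\mathcal{F}$ to each of $X_1$ and $X_2$ gives foliations whose boundary circles match under $f$, and gluing them produces a taut foliation on $X_1 \cup_f X_2 = \Sigma_2(C_{1,2}(K))$. By the first paragraph this foliation is co-orientable and the manifold is a $\mathbb{Z}$-homology sphere, so Lemma~\ref{lem:toroidalzs3} gives that $\pi_1(\Sigma_2(C_{1,2}(K)))$ is left-orderable, and hence $\Sigma_2(C_{1,2}(K))$ is excellent.

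The main obstacle is the middle step: making the identification of $f$ precise. The subtlety is that $X_1$ and $X_2$ are glued not directly but through the double branched cover of the cable space, and one must check that this intermediate piece is a product region $T^2 \times I$ transporting the slope-$\frac{1}{2}$ framing faithfully across. Here the covering involution, which swaps $X_1$ and $X_2$, forces the gluing to be symmetric, and this symmetry together with the cabling annulus is what ultimately guarantees that the cabling slope $\frac{1}{2}$ is the curve matched across the gluing. Everything downstream is then a formal application of Lemmas~\ref{lem:excellentgluing} and~\ref{lem:toroidalzs3}.
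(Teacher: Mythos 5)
Your endgame is correct and is in fact a mild simplification of the paper's: the Alexander polynomial computation $\Delta_{C_{1,2}(K)}(t)=\Delta_K(t^2)$ does show $\Sigma_2(C_{1,2}(K))$ is an integer homology sphere, so once a taut foliation is glued up you can apply Lemma~\ref{lem:toroidalzs3} to the closed manifold directly, bypassing the nontrivial left-orderability half of Lemma~\ref{lem:excellentgluing} (the paper instead applies Lemma~\ref{lem:toroidalzs3} to $S^3_{1/2}(K)$ to make $\frac12$ an excellent slope and then invokes the full gluing lemma). Your cabling-annulus observation is also sound as far as it goes: the annulus $A$ between $C$ and the slope-$\frac12$ curve on $\partial X$ meets the branch locus only in $C$, and its preimage is a single annulus through $\widetilde{C}$ whose two boundary circles are the lifts of the slope-$\frac12$ curve on $\partial X_1$ and $\partial X_2$, the deck involution exchanging its two halves. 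This is a genuinely different, and rather elegant, way to detect the matched slope compared with the paper's explicit Dehn-twist and matrix computation.

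The genuine gap is exactly at what you label the ``main obstacle,'' and your appeal to the symmetry of the covering involution does not close it. Every load-bearing step of your argument --- the existence of a homeomorphism $\Sigma_2(C_{1,2}(K))\cong X_1\cup_f X_2$ at all, the inference ``an annulus in the middle piece joining essential boundary curves implies $f$ matches those slopes,'' and the extension of the two foliations across the middle piece by parallel annuli --- presupposes that the intermediate piece $\widetilde{W}$ (the double branched cover of the solid torus $W=N(K)$ branched over $C$) is a product $\mathbb{T}^2\times I$. An exchange involution of a manifold with two boundary tori does not imply it is a product, and without productness your lifted annulus only exhibits a homology between two curves inside an unidentified manifold: it yields neither a well-defined slope correspondence nor a foliation of $\widetilde{W}$ with the required boundary slopes. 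This identification is precisely what the paper supplies, by showing the preimage of $J$ in $\Sigma_2(T_{1,2})\cong S^3$ is the Hopf link (Figure~\ref{fig:cables2fighopf}), whence $\widetilde{W}\cong\mathbb{T}^2\times I$, and then computing the gluing explicitly to obtain \eqref{eqn:cp2}, from which $\frac12\mapsto 1-\frac12=\frac12$. To repair your proof you must prove the product structure; besides the paper's route, one self-contained argument is Seifert-fibered: $W-int(N(C))$ is Seifert fibered over an annulus with one exceptional fiber of order $2$ (the core of $W$), the relevant double cover unwraps that exceptional fiber (the core has odd linking number $p=1$ with $C$) and so is $P\times S^1$ for $P$ a pair of pants, and the refilling along $\widetilde{C}$ is along a slope meeting the fiber once, so $\widetilde{W}$ is Seifert fibered over an annulus with no exceptional fibers, i.e.\ $\mathbb{T}^2\times I$. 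With that established, your annulus pins down the slope matching and the rest of your argument goes through.
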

In particular, if the Li-Roberts Conjecture holds, then $\Sigma_2(C_{1,2}(K))$ is excellent for all non-trivial $K$.  The theorem will be proved at the end of the section.  

Let $S^3 = V\cup_T W$ be the standard genus~1 Heegaard splitting of $S^3$.  Let $C\subset \text{int }W$ be an isotopic copy of a $(p,q)$-curve on $T$, where this curve intersects the meridional disk of $W$ $q$ times.  Let $J$ be a core of $V$.  Let $\sigma,\tau \subset T$ be meridians of $V,W$, respectively.  We will be more precise about our orientations of these curves shortly.

In $S^3 = V\cup W$ the curve $C$ is the torus knot $T_{p,q}$.  In the corresponding Seifert fibration of $S^3$, in which $C$ is an ordinary fiber, $J$ is 
the exceptional fiber of multiplicity $p$.  Let $\pi : \Sigma_n (T_{p,q}) \to S^3$ be the branched covering projection.  Then $\pi^{-1}(V)$ has $\gcd(n,q)$ components $\widetilde V_i$, $1\le i\le \gcd(n,q)$, each an $\frac{n}{\gcd(n,q)}$-fold covering of $V$.  In the induced Seifert fibration on $\Sigma_n (T_{p,q})$ the core of each $\widetilde V_i$ is an exceptional fiber of multiplicity $\frac{p}{\gcd(n,p)}$. Let $\widetilde \sigma_i$ be a meridian of $\widetilde V_i$, $1\le i\le \gcd(n,q)$. Also, $\widetilde\tau_i = \pi^{-1} (\tau) \cap \partial\widetilde V_i$ is connected for $1\le i\le \gcd(n,q)$.  Let $s, \varphi \subset \partial W$ be an oriented section-fiber pair for the induced Seifert structure on $W$ (i.e. $s \cdot \varphi = 1$).  We orient $\tau$ such that $\tau \cdot \varphi = q$ in $\partial W$.  Observe that $\varphi$ is a $(p,q)$-curve on $\partial W$.  We then orient $\sigma$ such that $\tau \cdot \sigma = 1$ on $\partial W$.

Let $X$ be the exterior of the knot $K$, and let $\lambda,\mu$ be a ($0$-framed) longitude-meridian pair on $\partial X$.  If we remove $int(V)$ from $S^3$ and replace it by $X$, identifying $\partial X$ with $\partial W$ in such a way that $\lambda,\mu$ are identified with $\sigma,\tau$, respectively, we get $S^3$, and the curve $C$ becomes the $(p,q)$-cable of $K$.

Let $\widetilde W = \pi^{-1}(W) = \Sigma_n (T_{p,q}) - \coprod_{i=1}^{\gcd(n,q)} int(\widetilde V_i)$, the $n$-fold cyclic branched cover of $W$ branched along $C$. 
Let $\widetilde X$ be the $\frac{n}{\gcd(n,q)}$-fold cyclic covering of $X$, and let $\widetilde\lambda, \widetilde\mu \subset \partial \widetilde X$ be as in the discussion in Section~\ref{subsec:branchedcovers}.  Then $\Sigma_n (C_{p,q}(K)) = \widetilde W \cup \coprod_{i=1}^{\gcd(n,q)} \widetilde X_i$, where $\widetilde X_i$ is a copy of $\widetilde X$ and $\partial\widetilde X_i$ is glued to $\partial\widetilde V_i$ in such a way that $\widetilde\lambda_i,\widetilde \mu_i$ are identified with $\widetilde\sigma_i,\widetilde\tau_i$, respectively.

\subsection{The proof of Theorem~\ref{thm:cables}}
The following proposition establishes Theorem~\ref{thm:cables} in the generic case.

\begin{proposition}\label{prop:generic}
Let $K$ be a knot in $S^3$.  Suppose $p\ne 1$ and $\{ n,p,q\} \ne \{2,2,r\}$, $\{ 2,3,3\}$, $\{2,3,4\}$ or $\{2,3,5\}$.  Then $\Sigma_n (C_{p,q}(K))$ is excellent.
\end{proposition}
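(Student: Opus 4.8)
The plan is to run the piece-by-piece gluing argument of Lemma~\ref{lem:excellentgluing} on the decomposition
$\Sigma_n(C_{p,q}(K)) = \widetilde{W} \cup \coprod_{i=1}^{\gcd(n,q)} \widetilde{X}_i$
recorded above. It suffices to exhibit a single multislope $\bfalpha$ that is simultaneously excellent on the Seifert fibered piece $\widetilde{W}$ and on each copy $\widetilde{X}_i$ of the $n'$-fold cyclic cover of the knot exterior, where $n' = \frac{n}{\gcd(n,q)}$; then $\Sigma_n(C_{p,q}(K))$ is excellent by Lemma~\ref{lem:excellentgluing}.

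First I would treat the knot-exterior factors. Lemma~\ref{lem:cover1/n}, applied to $\widetilde{X}$, yields a threshold $k_0$ so that for every $|k| > k_0$ the slope $\widetilde{\mu} + n'k\,\widetilde{\lambda}$ is excellent on $\widetilde{X}$. Under the gluing that identifies $\widetilde{\lambda}_i, \widetilde{\mu}_i$ with $\widetilde{\sigma}_i, \widetilde{\tau}_i$, this slope becomes $\widetilde{\tau}_i + n'k\,\widetilde{\sigma}_i$ on the boundary component $\partial\widetilde{V}_i$ of $\widetilde{W}$. Declaring $\bfalpha$ to use the same $k$ on every boundary torus makes $\bfalpha$ automatically excellent on each $\widetilde{X}_i$; the remaining freedom in the sign and size of $k$ is what I will use to control the other piece.

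The core of the argument is to show that this same $\bfalpha$ is an excellent multislope on $\widetilde{W}$. Since $\widetilde{W}$ is Seifert fibered, Lemma~\ref{lem:seifertboundary} reduces this to verifying that $\widetilde{W}(\bfalpha)$ is Seifert fibered over $S^2$ and admits a horizontal foliation. Now $\widetilde{W}(\bfalpha)$ is obtained from $\Sigma_n(T_{p,q})$ by re-surgering the cores of the $\widetilde{V}_i$, which are exceptional fibers of multiplicity $p/\gcd(n,p)$; concretely I would start from the known Seifert data of $\Sigma_n(T_{p,q})$ (cases (2) and (3) of \cite[Theorem 1]{NR}, as in Section~\ref{sec:torus}), rewrite $\widetilde{\tau}_i + n'k\,\widetilde{\sigma}_i$ in the oriented section-fiber basis $\widetilde{s}_i, \widetilde{\varphi}_i$ on each $\partial\widetilde{V}_i$, and read off the normalized Seifert invariants of the filling exactly as in the proof of Proposition~\ref{prop:negtorus}. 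For $|k|$ large these replaced fibers acquire invariants that are small in absolute value, the base orbifold stays $S^2$, and there remain at least three exceptional fibers; one then checks Condition~\eqref{condition:fibers} or Condition~\eqref{condition:-1} of Theorem~\ref{thm:brwcondition} (passing to $-\widetilde{W}(\bfalpha)$ via \eqref{eqn:seifertorientation} when convenient), choosing the sign of $k$ to land the Euler number in the admissible range. With the horizontal foliation in hand, Theorem~\ref{thm:sfequiv} gives that $\widetilde{W}(\bfalpha)$ is excellent, so $\bfalpha$ is an excellent multislope on $\widetilde{W}$.

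The step I expect to be the main obstacle is this Seifert-invariant bookkeeping on $\widetilde{W}$: tracking the lifts $\widetilde{\sigma}_i, \widetilde{\tau}_i$ and the fiber slope $\widetilde{\varphi}_i$ through the $n$-fold branched cover, the correct count $\gcd(n,q)$ of fibers produced, and the resulting Euler number, so as to guarantee both an orientable base orbifold and at least three exceptional fibers. This is also precisely where the excluded triples enter: when $\{n,p,q\}$ equals $\{2,2,r\}$, $\{2,3,3\}$, $\{2,3,4\}$, or $\{2,3,5\}$ the filling of $\widetilde{W}$ degenerates to a small Seifert fibered space — one with too few exceptional fibers or a finite (hence total L-space) fundamental group, mirroring the exceptional list of Theorem~\ref{thm:torus} — so Theorem~\ref{thm:brwcondition} cannot be applied, and these configurations must be set aside before the argument goes through.
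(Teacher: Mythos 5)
Your treatment of the knot-exterior pieces is fine: Lemma~\ref{lem:cover1/n} does give that $\widetilde{\alpha}_k = \widetilde{\mu} + n'k\widetilde{\lambda}$ (with $n' = n/\gcd(n,q)$) is excellent on each $\widetilde{X}_i$ for $|k| \gg 0$, and matching these slopes across the gluing is exactly how Lemma~\ref{lem:excellentgluing} is meant to be used. The gap is the step you yourself flag as the main obstacle: you never establish that $\widetilde{W}(\widetilde{\alpha}_k,\ldots,\widetilde{\alpha}_k)$ admits a horizontal foliation, and the heuristic you offer in its place is false. Writing $\widetilde{\tau}_i = \omega\widetilde{\gamma}_i + \eta\widetilde{\varphi}_i$ and $\widetilde{\sigma}_i = \pm(s\widetilde{\gamma}_i + r\widetilde{\varphi}_i)$, the filling slope is $(\omega \pm n'ks)\widetilde{\gamma}_i + (\eta \pm n'kr)\widetilde{\varphi}_i$, so the Seifert invariant of the new core fiber is $\frac{\eta \pm n'kr}{\omega \pm n'ks}$, which converges to $\frac{r}{s}$ --- the invariant of the core of $\widetilde{V}_i$ inside $\Sigma_n(T_{p,q})$ --- as $|k| \to \infty$; it is \emph{not} ``small in absolute value'' (compare Case 1 of Proposition~\ref{prop:exceptional}, where the new invariant $\frac{2k-3}{10-6k}$ tends to $-\frac{1}{3}$). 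Consequently, to complete your argument you would have to track, for \emph{every} non-exceptional triple $(n,p,q)$, the normalized invariants, the Euler number, the unbounded multiplicities of the new fibers (which change which condition of Theorem~\ref{thm:brwcondition} is relevant, since the cores you re-surger may have been regular fibers), and the sign ambiguity in $\widetilde{\sigma}_i$ --- in effect redoing the analysis behind Theorem~\ref{thm:torus} from the data of \cite[Theorem 1]{NR}. That computation is the actual content of your approach, and it is absent; and the natural shortcut, an ``openness'' argument perturbing the limiting filling $\widetilde{W}(\widetilde{\boldsymbol\sigma}) = \Sigma_n(T_{p,q})$, requires certifying that limiting filling as excellent, which is already the paper's entire proof.

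Indeed, the paper's proof simply takes the multislope $\widetilde{\boldsymbol\sigma}$ itself, i.e.\ the lifted longitudes $\widetilde{\lambda}_i$: the filling $\widetilde{W}(\widetilde{\boldsymbol\sigma}) = \Sigma_n(T_{p,q})$ is excellent by Theorem~\ref{thm:torus} --- this is precisely where the excluded triples enter --- so $\widetilde{\boldsymbol\sigma}$ is an excellent multislope on $\widetilde{W}$ by Lemma~\ref{lem:seifertboundary}; on the other side, $\widetilde{\lambda}$ is a CTF slope for $\widetilde{X}$ by lifting Gabai's foliation, and it is an LO slope because $\widetilde{X}(\widetilde{\lambda})$ is prime with infinite first homology (\cite[Corollary 3.4]{BRW}); Lemma~\ref{lem:excellentgluing} then finishes, with no Seifert-invariant computation at all. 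Note also that your account of the exceptional triples is backwards: they are excluded because $\Sigma_n(T_{p,q})$ has finite fundamental group there, so the longitude multislope fails to be excellent on $\widetilde{W}$; your $\widetilde{\alpha}_k$-strategy, far from breaking down in those cases, is exactly what the paper carries out \emph{for} them in Propositions~\ref{prop:exceptional} and~\ref{prop:234}, where the explicit small invariants make the bookkeeping feasible (the only genuine failure being $n = q = 2$).
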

\begin{proof}
Using the notation in the discussion before the statement of the proposition, $\widetilde W(\widetilde{\boldsymbol\sigma}) = \widetilde W (\widetilde\sigma_1,\ldots, \widetilde\sigma_{\gcd(n,q)}) = \Sigma_n (T_{p,q})$, which by Theorem~\ref{thm:torus} is excellent if  $\{n,p,q\}$ is not one of the exceptions listed in the proposition. Hence by Lemma~\ref{lem:seifertboundary}, $\widetilde{\boldsymbol\sigma}$ is excellent for $\widetilde W$. By \cite{Gabai3}, $\lambda$ is a CTF slope for $X$, and hence $\widetilde\lambda$ is a CTF  slope for $\widetilde X$.  In particular, we have that $\widetilde X(\widetilde\lambda)$ is prime. Since $H_1(X(\lambda))$ is infinite, so is $H_1(\widetilde X(\widetilde\lambda))$.  Therefore, $\widetilde \lambda$ is an LO slope for  $\widetilde X$ by \cite[Corollary 3.4]{BRW}.  Thus $\Sigma_n (C_{p,q}(K))$ is excellent by Lemma~\ref{lem:excellentgluing}.
\end{proof}

To treat the cases where $(n,p,q)$ is one of the exceptional triples in Theorem~\ref{thm:torus} we continue with a more detailed analysis of the description of $\Sigma_n(C_{p,q}(K))$ given just before the present subsection.  

Recall that $\varphi \subset \partial W$ is a regular fiber in the Seifert fibration of $S^3$ described above.  Note that $\widetilde\varphi_i = \pi^{-1}(\varphi) \cap \partial \widetilde{V}_i$ has $\gcd(\frac{n}{\gcd(n,q)},p) = \gcd(n,p)$ components.  Since we have oriented $\varphi$ so that $\tau \cdot \varphi = q$ on $\partial W$, we have $\widetilde{\tau}_i \cdot \widetilde{\varphi_i} = \frac{nq}{\gcd(n,q)\gcd(n,p)}$ on $\partial \widetilde{W}$.  For notation, we let $\omega = \frac{nq}{\gcd(n,q)\gcd(n,p)}$.    

We have $\widetilde{W}(\widetilde{\sigma}_1,\ldots,\widetilde{\sigma}_{\gcd(n,q)}) = \Sigma_n(T_{p,q})$.  As in Section~\ref{sec:torus}, we will use the description of the Seifert invariants of cyclic branched covers of torus knots in \cite[Theorem 1]{NR}.  We define $\widetilde{\gamma}_i \subset \partial \widetilde{V}_i$ as the section with respect to which the Seifert invariants of $\Sigma_n(T_{p,q})$ are described in \cite[Theorem 1]{NR}.  Thus $\widetilde{\gamma}_i \cdot \widetilde{\varphi}_i = 1$ in $\partial \widetilde{W}$, and if the core of $\widetilde{V}_i$ has Seifert invariants $\frac{r}{s}$ in $\Sigma_n(T_{p,q})$, then $\widetilde{\sigma}_i = \pm (s\widetilde{\gamma}_i + r\widetilde{\varphi}_i)$.  In some cases, the orientation of $\widetilde{\sigma}_i$ will be  easily determined, while often it will not.  In the latter situation, rather than repeat the constructions of \cite{NR} in precise detail to obtain the exact orientation of $\widetilde{\sigma}_i$, it will be easier to simply treat both cases, even though only one can possibly arise.      

Observe that $\widetilde{\tau}_i = \omega \widetilde{\gamma}_i + \eta \widetilde{\varphi}_i$ for some $\eta \in \mathbb{Z}$.  Recall that we must have that $\widetilde{\tau}_i \cdot \widetilde{\sigma}_i = \widetilde{\gamma}_i \cdot \widetilde{\varphi}_i = 1$ on $\partial \widetilde{W}$.  This will determine the possible values of $\eta$.  

The strategy to complete the proof of Theorem~\ref{thm:cables} is as follows.  We consider slopes $\alpha_k = \mu + k\lambda$ on $\partial X$, $k \in \Z$.  The corresponding slope on $\partial \widetilde{X}_i$ is $\widetilde{\alpha}_k = \widetilde{\mu}_i + \frac{n}{\gcd(n,q)} k \widetilde{\lambda}_i$, which is identified with $\ttau_i + \frac{n}{\gcd(n,q)} k \widetilde{\sigma}_i$ on $\partial \widetilde{W}$.  We will show that for $k \gg 0$ or $k \ll 0$,  $\widetilde{W}(\widetilde{\alpha}_k,\ldots,\widetilde{\alpha}_k)$ is a Seifert fibered space with a horizontal foliation, and therefore $(\widetilde{\alpha}_k,\ldots,\widetilde{\alpha}_k)$ is an excellent multislope for $\widetilde{W}$ by Lemma~\ref{lem:seifertboundary}.  Since, for $|k|$ sufficiently large, $\widetilde{\alpha}_k$ is an excellent slope for $\widetilde{X}$ by Lemma~\ref{lem:cover1/n}, $\Sigma_n(C_{p,q}(K))$ will be excellent by Lemma~\ref{lem:excellentgluing}.  

The details are given in Propositions~\ref{prop:exceptional}, \ref{prop:234}, and \ref{prop:1q} below.  

We first treat the case where $\gcd(n,q) = 1$ and $p \neq 1$.  Then, $\pi^{-1}(V) = \widetilde{V}$ is connected, and we drop the subscript $i$ from $\widetilde{V}_i, \widetilde{\sigma}_i$, etc.  Also, for the rest of this section, $X^{(n)}$ will denote the $n$-fold cyclic covering of $X$.  

\begin{proposition}\label{prop:exceptional}
Let $K$ be a non-trivial knot in $S^3$.  If $\{n,p,q\} = \{2,3,5\}$ or $(n,p,q) = (2,4,3), (3,3,2)$, $(4,2,3)$, or $(2,2,q)$, then $\Sigma_n(C_{p,q}(K))$ is excellent.  
\end{proposition}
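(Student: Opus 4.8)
The plan is to realize $\Sigma_n(C_{p,q}(K))$ as $\widetilde{W} \cup \widetilde{X}$, where the hypothesis $\gcd(n,q) = 1$ forces $\pi^{-1}(V) = \widetilde{V}$ to be connected, so that there is a single boundary torus and the multislopes reduce to single slopes. Following the strategy set up just before the statement, I would use the slopes $\widetilde{\alpha}_k = \widetilde{\tau} + nk\widetilde{\sigma}$ on $\partial \widetilde{W}$, which are identified with $\widetilde{\mu} + nk\widetilde{\lambda}$ on $\partial \widetilde{X}$. By Lemma~\ref{lem:cover1/n}, $\widetilde{\alpha}_k$ is already excellent for $\widetilde{X}$ once $|k|$ is large, so by Lemma~\ref{lem:excellentgluing} the whole problem reduces to showing that $\widetilde{\alpha}_k$ is excellent for $\widetilde{W}$ for some $k$ with $|k|$ large. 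By Lemma~\ref{lem:seifertboundary}, it then suffices to verify that the closed manifold $\widetilde{W}(\widetilde{\alpha}_k)$ is Seifert fibered over $S^2$ and admits a horizontal foliation.

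To make this checkable I would compute the Seifert invariants of $\widetilde{W}(\widetilde{\alpha}_k)$ explicitly in each of the finitely many exceptional triples. The starting point is the description of the invariants of $\Sigma_n(T_{p,q}) = \widetilde{W}(\widetilde{\sigma})$ from \cite[Theorem 1]{NR}. Removing a neighborhood of the core of $\widetilde{V}$ — an exceptional fiber of multiplicity $p/\gcd(n,p)$ with some invariant $r/s$ — leaves $\widetilde{W}$ as a Seifert fibered space over a disk carrying the remaining exceptional fibers. In the section-fiber basis $\widetilde{\gamma}, \widetilde{\varphi}$ one has $\widetilde{\sigma} = \pm(s\widetilde{\gamma} + r\widetilde{\varphi})$ and $\widetilde{\tau} = \omega\widetilde{\gamma} + \eta\widetilde{\varphi}$ with $\omega = nq/\gcd(n,p)$, and the constraint $\widetilde{\tau}\cdot\widetilde{\sigma} = 1$ determines $\eta$ up to the sign ambiguity of $\widetilde{\sigma}$. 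Filling along $\widetilde{\alpha}_k = (\omega \pm nks)\widetilde{\gamma} + (\eta \pm nkr)\widetilde{\varphi}$ caps the disk off to $S^2$ and adds one new exceptional fiber, of multiplicity $|\omega \pm nks|$ and invariant $(\eta \pm nkr)/(\omega \pm nks)$, producing closed Seifert invariants over $S^2$ to which Theorem~\ref{thm:brwcondition} applies.

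The decisive observation is that as $k \to \pm\infty$ the new fiber's invariant tends to $r/s$ (the invariant of the fiber just removed) while its multiplicity $|\omega \pm nks|$ blows up, and the two signs of $k$ approach $r/s$ from opposite sides. Thus, although filling exactly along $\widetilde{\sigma}$ returns the total L-space $\Sigma_n(T_{p,q})$, which carries no horizontal foliation in these cases, I expect that for one choice of sign of $k$ the normalized invariants land in the regime where condition~\eqref{condition:-1} (or, after applying the orientation reversal~\eqref{eqn:seifertorientation}, condition~\eqref{condition:-(n-1)}) of Theorem~\ref{thm:brwcondition} holds: the extra fiber of large multiplicity, with invariant slightly below or above $r/s$, supplies exactly the strict inequality $\beta/\alpha < 1/m$ (or $< (m-a)/m$) that is needed, with $m$ comparable to the large multiplicity and $a = 1$, just as in the torus-knot computations of Section~\ref{sec:torus}.

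I expect the main obstacle to be bookkeeping and orientation control rather than any conceptual difficulty. There are several triples to treat ($\{2,3,5\}$ in its various admissible orderings, $(2,4,3)$, $(3,3,2)$, $(4,2,3)$, and the infinite family $(2,2,q)$ with $q$ odd), and in each the sign of $\widetilde{\sigma}$ coming from \cite{NR} is not transparent; as the surrounding text indicates, I would sidestep this by carrying both signs through and checking that for each sign an appropriate sign of $k$ yields the horizontal foliation condition. I would also need to confirm that $\widetilde{W}(\widetilde{\alpha}_k)$ genuinely has base orbifold $S^2$ with at least three exceptional fibers, so that Theorem~\ref{thm:brwcondition} is applicable — in particular in the degenerate family $(2,2,q)$, where $\Sigma_2(T_{2,q})$ is a lens space and the core of $\widetilde{V}$ is a regular fiber, one must verify that the filling produces a bona fide three-fiber Seifert space. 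Assembling these verifications and feeding the resulting excellent slope back through Lemmas~\ref{lem:cover1/n}, \ref{lem:seifertboundary}, and \ref{lem:excellentgluing} completes the argument.
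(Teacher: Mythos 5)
Your proposal matches the paper's proof essentially step for step: the same decomposition with $\widetilde{V}$ connected, the same reduction through Lemmas~\ref{lem:cover1/n}, \ref{lem:seifertboundary}, and \ref{lem:excellentgluing} to showing that $\widetilde{W}(\widetilde{\tau} + nk\widetilde{\sigma})$ admits a horizontal foliation for $k \gg 0$ or $k \ll 0$, the same use of \cite[Theorem 1]{NR} to write $\widetilde{\sigma} = \pm(s\widetilde{\gamma} + r\widetilde{\varphi})$ and solve $\widetilde{\tau}\cdot\widetilde{\sigma} = 1$ with both signs carried through, and the same limiting principle that the new fiber's invariant $(\eta \pm nkr)/(\omega \pm nks)$ approaches $r/s$ from opposite sides as $k \to \pm\infty$; the case-by-case verifications you defer (including confirming three genuine exceptional fibers in the $(2,2,q)$ family) are precisely the body of the paper's proof. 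The one heuristic you should discard before carrying out those checks is that the integer $m$ in Theorem~\ref{thm:brwcondition} can be taken comparable to the new fiber's large multiplicity $|\omega \pm nks|$: that choice would force the remaining fibers to satisfy $\frac{\beta_j}{\alpha_j} < \frac{1}{m}$ with $m$ huge, which fails, and what actually happens in every case of the paper is that $m$ stays small (always $m \le 5$ there), with the large $|k|$ of the correct sign serving only to push the new invariant strictly onto the good side of $r/s$ (e.g.\ into $(0,\tfrac{1}{3})$ with $m=3$, $a=1$ for $(n,p,q)=(2,3,5)$).
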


\begin{proof}
Note that $\widetilde{J} = \pi^{-1}(J)$ is a fiber of multiplicity $\frac{p}{\gcd(n,p)}$ in $\Sigma_n(T_{p,q})$.  Also, $\widetilde{\tau} = \omega  \widetilde{\gamma} + \eta \widetilde{\varphi}$ where $\omega = \frac{nq}{\gcd(n,p)}$, since $\gcd(n,q) = 1$.  Here, $\talpha_k = \tmu + nk\tlambda$ is identified with $\ttau + nk\tsigma$ on $\partial \widetilde{W}$.  Thus, by the discussion preceding the statement of the proposition, it suffices to show that $\widetilde{W}(\ttau + nk \tsigma)$ admits a horizontal foliation for $k \gg 0$ or $k \ll 0$.  \\

\noindent {\bf Case 1:} $(n,p,q) = (2,3,5)$.\\
From \cite[Theorem 1]{NR}, $\Sigma_2(T_{3,5}) = M(1,-\frac{1}{2},-\frac{1}{3},-\frac{1}{5})$.  Since $\widetilde{J}$ is the fiber of multiplicity 3, $\widetilde{\sigma} = \pm(-3\widetilde{\gamma} + \widetilde{\varphi})$.  We have $\omega = 10$, and thus 
\[
1 = \ttau \cdot \tsigma = \pm (10 \tgamma + \eta \tphi) \cdot (-3 \tgamma + \tphi) = \pm (3\eta + 10).
\]
Therefore, we have that $\eta = -3$, and thus $\widetilde{\tau} = 10 \widetilde{\gamma} - 3\widetilde{\varphi}$ and $\widetilde{\sigma} = -3\tgamma + \tphi$.  

The slope $\widetilde{\alpha}_k = \widetilde{\mu} + 2 k \widetilde{\lambda}$ on $\partial X^{(2)}$ is identified with $\widetilde{\tau} + 2  k \widetilde{\sigma} = (10 \widetilde{\gamma} - 3 \widetilde{\varphi}) + 2k( -3 \widetilde{\gamma} + \widetilde{\varphi}) = (10-6k) \widetilde{\gamma} + (-3+2k)\widetilde{\varphi}$ on $\partial \widetilde{W}$.  Hence $\widetilde{W}(\widetilde{\alpha}_k)$ has Seifert invariants $M(1,-\frac{1}{2},-\frac{1}{5}, \frac{2k-3}{10-6k}) = -M(-1,\frac{1}{2},\frac{1}{5}, \frac{2k-3}{6k-10})$.  If $k \leq 0$, then $0 < \frac{2k-3}{6k-10} < \frac{1}{3}$.  By choosing $m = 3$ and $a = 1$, it follows from Condition~\eqref{condition:-(n-1)} of Theorem~\ref{thm:brwcondition} that $\widetilde{W}(\widetilde{\alpha}_k)$ has a horizontal foliation.  This is what we wanted to show.  \\

\noindent {\bf Case 2:} $(n,p,q) = (2,5,3)$. \\
 Here $\widetilde{\sigma} = \pm (-5 \widetilde{\gamma} + \widetilde{\varphi})$, and $\widetilde{\tau} = 6 \widetilde{\gamma} + \eta \widetilde{\varphi}$.  We have 
\[
1 = \ttau \cdot \tsigma = \pm (6 \tgamma + \eta \tphi) \cdot (-5\tgamma + \tphi) = \pm (6 + 5 \eta).  
\]
Therefore, $\eta = -1$ and we have $\ttau = 6 \tgamma - \tphi$, $\tsigma = -5 \tgamma + \tphi$.  
Hence on $\partial \widetilde{W}$, $\widetilde{\alpha}_k = \widetilde{\tau} + 2k \tsigma = (6-10k)\tgamma + (2k-1)\tphi$.  Then $\widetilde{W}(\talpha_k)$ is the Seifert fibered space $M(1,-\frac{1}{2},-\frac{1}{3}, \frac{2k-1}{6-10k}) = - M(-1,\frac{1}{2},\frac{1}{3},\frac{2k-1}{10k-6})$.  For $k \leq 0$, $0 < \frac{2k-1}{10k-6} < \frac{1}{5}$.  Letting $m = 5$ and $a = 2$, we see that $\widetilde{W}(\talpha_k)$ has a horizontal foliation by Condition~\eqref{condition:-(n-1)} of Theorem~\ref{thm:brwcondition}. \\

\noindent {\bf Case 3:} $(n,p,q) = (3,2,5)$.\\
 Here $\tsigma = \pm (-2 \tgamma + \tphi)$, and $\ttau = 15 \tgamma + \eta \tphi$.  In this case, we have 
\[
1 = \pm (15 \tgamma + \eta \tphi) \cdot (-2 \tgamma + \tphi) = \pm (15 + 2 \eta).  
\]
Thus, we have two possibilities.  The first is that $\eta = -7$, $\ttau = 15 \tgamma - 7 \varphi$, and $\tsigma = -2\tgamma + \tphi$.  The second case is that $ \eta = -8$, $\ttau = 15\tgamma - 8 \tphi$, and $\tsigma = 2 \tgamma - \tphi$.  

First, suppose that $\eta = -7$.  The slope $\talpha_k = \tmu + 3k \tlambda$ on $\partial X^{(3)}$ is identified with $\ttau + 3k \tsigma = (15-6k)\tgamma + (3k-7)\tphi$ on $\partial \tW$.  Therefore $\tW(\talpha_k)$ has Seifert invariants  $M(1,-\frac{1}{3},-\frac{1}{5}, \frac{3k-7}{15-6k}) = -M(-1,\frac{1}{3},\frac{1}{5},\frac{3k-7}{6k-15})$.  If $k \leq 0$, then $0< \frac{7-3k}{15-6k} < \frac{1}{2}$ and $\tW(\talpha_k)$ has a horizontal foliation by Condition~\eqref{condition:-(n-1)} of Theorem~\ref{thm:brwcondition}. 

Next, suppose that $\eta = -8$.  The slope $\talpha_k = \tmu + 3k \tlambda$ on $\partial X^{(3)}$ is identified with $\ttau + 3k \tsigma = (15+6k)\tgamma + (-8-3k)\tphi$ on $\partial \tW$.  Therefore $\tW(\talpha_k)$ has Seifert invariants  $M(1,-\frac{1}{3},-\frac{1}{5}, \frac{-3k-8}{6k+15}) = -M(-1,\frac{1}{3},\frac{1}{5},\frac{3k+8}{6k+15})$.  If $k \ll 0$, then $0 < \frac{3k+8}{6k+15} < \frac{1}{2}$ and $\tW(\talpha_k)$ has a horizontal foliation by Condition~\eqref{condition:-(n-1)} of Theorem~\ref{thm:brwcondition}.

The cases $(n,p,q) = (3,5,2), (5,2,3)$, and $(5,3,2)$ are completely analogous; we leave the details to the reader.  \\

\noindent {\bf Case 4:} $(n,p,q) = (2,4,3)$.  \\
By \cite[Theorem 1]{NR}, $\Sigma_2(T_{4,3}) = M(\frac{1}{2},-\frac{1}{3},-\frac{1}{3})$.  Since $\widetilde{J}$ has multiplicity 2, we have $ \tsigma = \pm(2 \tgamma  + \tphi)$.  Also, $\omega = \frac{nq}{\gcd(n,p)} = 3$.  We again have two cases.  The first case is $\eta = 1$, $\ttau = 3 \tgamma + \tphi$, $\tsigma = 2 \tgamma + \tphi$.  The slope $\talpha_k = \tmu + 2k\tlambda$ on $\partial X^{(2)}$ is identified with $\ttau + 2k \tsigma = (4k+3)\tgamma + (2k+1)\tphi$ on $\partial \tW$.  So 
\[
\tW(\talpha_k) = M\left(-\frac{1}{3},-\frac{1}{3}, \frac{2k+1}{4k+3}\right) = -M\left(\frac{1}{3},\frac{1}{3},-\frac{2k+1}{4k+3}\right) = -M\left(-1,\frac{1}{3},\frac{1}{3},\frac{2k+2}{4k+3}\right).
\]
Since $\frac{2k+1}{4k+3} < \frac{1}{2}$ for $k \geq 0$, $\tW(\talpha_k)$ has a horizontal foliation by Condition~\eqref{condition:-(n-1)} of Theorem~\ref{thm:brwcondition}.  

The second case is $\eta = 2$, $\ttau = 3 \tgamma + 2\tphi$, $\tsigma = -2 \tgamma -\tphi$.  The slope $\talpha_k = \tmu + 2k\tlambda$ on $\partial X^{(2)}$ is identified with $\ttau + 2k \tsigma = (3 - 4k)\tgamma + (2-2k)\tphi$ on $\partial \tW$.  So 
\[
\tW(\talpha_k) = M\left(-\frac{1}{3},-\frac{1}{3}, \frac{2k-2}{4k-3}\right) = -M\left(\frac{1}{3},\frac{1}{3},\frac{2-2k}{4k-3}\right) = -M\left(-1,\frac{1}{3},\frac{1}{3},\frac{2k-1}{4k-3}\right).  
\]
Since $\frac{2k-1}{4k-3} < \frac{1}{2}$ for $k \leq 0$, $\tW(\talpha_k)$ has a horizontal foliation by Condition~\eqref{condition:-(n-1)} of Theorem~\ref{thm:brwcondition}.  \\

\noindent {\bf Case 5:} $(n,p,q) = (3,3,2)$.  \\
By \cite[Theorem 1]{NR}, $\Sigma_3(T_{3,2}) = M(-\frac{1}{2},-\frac{1}{2},-\frac{1}{2},\frac{1}{1})$.  Since $\widetilde{J}$ has multiplicity 1, we have $\tsigma = \pm (\tgamma + \tphi)$.  We have two cases.  

First, $\ttau = 2 \tgamma + \tphi$, $\tsigma = \tgamma + \tphi$.  The slope $\talpha_k = \tmu + 3k \tlambda$ on $\partial X^{(3)}$ is identified with $\ttau + 3k \tsigma = (3k+2) \tgamma + (3k+1) \tphi$ on $\partial \tW$.  Therefore, $\tW(\talpha_k)$ has Seifert invariants 
\[
M\left(-\frac{1}{2},-\frac{1}{2},-\frac{1}{2},\frac{3k+1}{3k+2}\right) = M\left(-2,\frac{1}{2},\frac{1}{2},\frac{1}{2},\frac{-1}{3k+2}\right). 
\]
For $k \ll 0$, the latter are normalized Seifert invariants, and thus $\tW(\alpha_k)$ has a horizontal foliation by Condition~\eqref{condition:fibers} of Theorem~\ref{thm:brwcondition}.   

Second, we have $\ttau = 2\tgamma + 3\tphi$, $\tsigma = -\tgamma - \tphi$.  The slope $\talpha_k = \tmu + 3k \tlambda$ on $\partial X^{(3)}$ is identified with $\ttau + 3k \tsigma = (2-3k) \tgamma + (3-3k) \tphi$ on $\partial \tW$.  Therefore, $\tW(\talpha_k)$ has Seifert invariants 
\[
M\left(-\frac{1}{2},-\frac{1}{2},-\frac{1}{2},\frac{3-3k}{2-3k}\right) = M\left(-2,\frac{1}{2},\frac{1}{2},\frac{1}{2},\frac{1}{2-3k}\right).
\]
For $k \leq 0$, the latter invariants are normalized, and thus $\tW(\alpha_k)$ has a horizontal foliation by Condition~\eqref{condition:fibers} of Theorem~\ref{thm:brwcondition}.   \\

\noindent {\bf Case 6:} $(n,p,q) = (4,2,3)$.\\
 By \cite[Theorem 1]{NR} $\Sigma_4(T_{2,3}) = M(-\frac{1}{2},\frac{1}{1},-\frac{1}{3},-\frac{1}{3})$.  Since $\widetilde{J}$ has multiplicity 1, we have $ \tsigma = \pm (\tgamma + \tphi)$.  Again, there are two cases.  
 
For the first case, we have $\ttau = 6 \tgamma + 5 \tphi$, $\tsigma = \tgamma + \tphi$.  The slope $\talpha_k = \tmu + 4k \tlambda$ on $\partial X^{(4)}$ is identified with $\ttau + 4k \tsigma = (4k+6) \tgamma + (4k+5) \tphi$ on $\partial \tW$.  Thus, $\tW(\alpha_k)$ has Seifert invariants
 \[
 M\left(-\frac{1}{2},-\frac{1}{3},-\frac{1}{3},\frac{5+4k}{6+4k}\right) = -M\left(-2,\frac{1}{2},\frac{1}{3},\frac{1}{3},\frac{4k+7}{4k+6}\right).  
\]
Hence for $k \leq -2$ we are in Condition~\eqref{condition:fibers} of Theorem~\ref{thm:brwcondition} and $\tW(\talpha_k)$ has a horizontal foliation. 

For the second case, we have $\ttau = 6 \tgamma + 7 \tphi$, $\tsigma = -\tgamma - \tphi$.  In this case, $\tW(\talpha_k)$ has Seifert invariants
\[
 M\left(-\frac{1}{2},-\frac{1}{3},-\frac{1}{3},\frac{7-4k}{6-4k}\right) = -M\left(-2,\frac{1}{2},\frac{1}{3},\frac{1}{3},\frac{4k-5}{4k-6}\right).  
\]
Hence for $k \leq 0$ we are in Condition~\eqref{condition:fibers} of Theorem~\ref{thm:brwcondition} and $\tW(\talpha_k)$ has a horizontal foliation. \\

\noindent {\bf Case 7:} $(n,p,q) = (2,2,q)$. \\
Observe that $\Sigma_2(T_{2,q})$ is a lens space.  More specifically, from \cite[Theorem 1]{NR} we get that $\Sigma_2 (T_{2,q})$ has Seifert invariants $M(-\frac11, \frac{\beta_2}q, \frac{\beta_2}q)$ where $\beta_2 =\frac{q-1}2$.  Since $J$ has multiplicity~2, and $\widetilde J\to J$ is a 2-fold connected covering, we have $\widetilde J$ is the fiber with Seifert invariant $-\frac{1}1$.  
Hence $\widetilde\sigma = \pm(-\widetilde\gamma + \widetilde\varphi)$.  Again, there are two cases.  

First, we have $\ttau = q \tgamma + (1-q) \tphi$ and $\tsigma = -\tgamma + \tphi$.  In $\Sigma_2 (C_{2,q}(K)) \cong X^{(2)} \cup \widetilde W$, the slope of $\widetilde \alpha_k = \widetilde\mu + 2k\widetilde\lambda$ is identified with $\widetilde\tau + 2k\widetilde\sigma = (q-2k)\widetilde\gamma + (2k+1-q)\widetilde\varphi$.  Then, $\tW(\alpha_k)$ has Seifert invariants 
\[
M\left(-1,\frac{(q-1)/2}{q},\frac{(q-1)/2}{q},\frac{1}{q-2k}\right).
\]
Since $\frac{(q-1)/2}q < \frac12$, and $0<\frac1{q-2k} < \frac12$ if $k\leq 0$, $\tW(\widetilde\alpha_k)$ admits a horizontal foliation by \eqref{condition:-1} of Theorem~\ref{thm:brwcondition}.

Second, we have $\ttau = q \tgamma + (-1-q) \tphi$ and $\tsigma = \tgamma - \tphi$.  Then, $\tW(\alpha_k)$ has Seifert invariants 
\[
M\left(-1,\frac{(q-1)/2}{q},\frac{(q-1)/2}{q},\frac{-1}{2k+q}\right).
\]
Since $\frac{(q-1)/2}q < \frac12$, and $0<\frac{-1}{2k+q} < \frac12$ if $k \ll 0$, $\tW(\widetilde\alpha_k)$ admits a horizontal foliation by \eqref{condition:-1} of Theorem~\ref{thm:brwcondition}.  
\end{proof}

We next consider the case $\gcd(n,q) \ne 1$, i.e. $\widetilde J$ is not connected.

\begin{proposition}\label{prop:234} 
Let $K$ be a non-trivial knot in $S^3$. 
Then the manifolds $\Sigma_2 (C_{3,4}(K))$, $\Sigma_4(C_{3,2}(K))$, and $\Sigma_3 (C_{2,3}(K))$ are excellent.
\end{proposition}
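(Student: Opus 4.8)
The plan is to run the strategy already used for Proposition~\ref{prop:exceptional}, the only new feature being that now $\gcd(n,q)\neq 1$, so that $\widetilde J$ is disconnected, $\tW$ has $\gcd(n,q)$ boundary tori, and all of them must be Dehn filled symmetrically by the common slope $\talpha_k$. For each of the three triples $(n,p,q)=(2,3,4),(4,3,2),(3,2,3)$ I would show that, for $k$ of an appropriate sign with $|k|$ large, $\tW(\talpha_k,\ldots,\talpha_k)$ is a Seifert fibered space over $S^2$ admitting a horizontal foliation; then $(\talpha_k,\ldots,\talpha_k)$ is an excellent multislope for $\tW$ by Lemma~\ref{lem:seifertboundary}, each $\talpha_k$ is an excellent slope for $\widetilde X_i = X^{(n/\gcd(n,q))}$ by Lemma~\ref{lem:cover1/n} (or Lemma~\ref{lem:1/n} when $\tfrac{n}{\gcd(n,q)}=1$), and Lemma~\ref{lem:excellentgluing} assembles the pieces to conclude that $\Sigma_n(C_{p,q}(K))$ is excellent.

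The first step is to record the Seifert invariants of $\Sigma_n(T_{p,q})$ from \cite[Theorem 1]{NR} (these already appear in Proposition~\ref{prop:exceptional}) and to identify the cores of the components $\widetilde V_i$ of $\widetilde J$ as the $\gcd(n,q)$ fibers of multiplicity $\tfrac{p}{\gcd(n,p)}$. Concretely, for $(2,3,4)$ and $(4,3,2)$ these are the two multiplicity-$3$ fibers in $M(\tfrac12,-\tfrac13,-\tfrac13)$, respectively $M(-\tfrac12,\tfrac11,-\tfrac13,-\tfrac13)$, while for $(3,2,3)$ they are the three multiplicity-$2$ fibers in $M(-\tfrac12,-\tfrac12,-\tfrac12,\tfrac11)$. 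Next I express $\tsigma_i,\ttau_i$ in the section-fiber basis $\tgamma_i,\tphi_i$: if the core of $\widetilde V_i$ has Seifert invariant $\tfrac rs$ then $\tsigma_i=\pm(s\tgamma_i+r\tphi_i)$, and $\ttau_i=\omega\tgamma_i+\eta\tphi_i$ with $\omega=\tfrac{nq}{\gcd(n,q)\gcd(n,p)}$ (equal to $4,4,3$ in the three cases). The relation $\ttau_i\cdot\tsigma_i=1$ then pins down $\eta$; as in the text I would treat both signs of $\tsigma_i$, since only the sign giving integral $\eta$ can actually arise.

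With these in hand, $\talpha_k=\tmu_i+\tfrac{n}{\gcd(n,q)}k\tlambda_i$ is identified with $\ttau_i+\tfrac{n}{\gcd(n,q)}k\tsigma_i$, from which I read off the Seifert invariant of the exceptional fiber that the filling introduces, obtaining $\tW(\talpha_k,\ldots,\talpha_k)$ as an explicit Seifert space whose invariants are linear-fractional in $k$. Normalizing and, where needed, reversing orientation via \eqref{eqn:seifertorientation}, one applies Theorem~\ref{thm:brwcondition}. In the two three-fiber cases $(2,3,4)$ and $(4,3,2)$ the normalized Euler number lands at $-2=-(n-1)$, so I verify Condition~\eqref{condition:-1} for $-\tW(\talpha_k,\ldots)$ for $k\ll 0$ (e.g. the small fibers satisfy the inequality with $m=3$, $a=2$), i.e. Condition~\eqref{condition:-(n-1)}; in the four-fiber case $(3,2,3)$ the Euler number equals $-2=-(n-2)$, so Condition~\eqref{condition:fibers} applies directly. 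This produces the desired horizontal foliation.

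The genuinely new difficulty, relative to the connected case of Proposition~\ref{prop:exceptional}, is the bookkeeping forced by the disconnectedness of $\widetilde J$: filling several tori produces a Seifert space with more exceptional fibers, so one lands in Condition~\eqref{condition:-(n-1)} or exactly at the endpoint $b=-(n-2)$ of the range in Condition~\eqref{condition:fibers}, and the relevant inequalities must be checked carefully as functions of $k$. I expect the two points requiring the most care to be (i) matching the sign of $k$ dictated by the horizontal-foliation computation with the large-$|k|$ hypothesis of Lemma~\ref{lem:cover1/n}, which is fine since that lemma (and Lemma~\ref{lem:1/n}) holds for either large sign; and (ii) resolving the orientation ambiguity of $\tsigma_i$, which in some triples leaves two candidate fillings, each of which must be shown to admit a horizontal foliation for a suitable choice of $k$.
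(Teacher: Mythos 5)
Your overall strategy is the same as the paper's: identify the cores of the $\widetilde V_i$ with the fibers of multiplicity $\tfrac{p}{\gcd(n,p)}$ in the Seifert structure from \cite[Theorem 1]{NR}, pin down $\ttau_i$ and $\tsigma_i$ via $\ttau_i\cdot\tsigma_i = 1$ (treating both signs of $\tsigma_i$ where necessary), compute the Seifert invariants of $\tW(\talpha_k,\ldots,\talpha_k)$ as functions of $k$, apply Theorem~\ref{thm:brwcondition}, and assemble the conclusion with Lemmas~\ref{lem:seifertboundary}, \ref{lem:cover1/n} and \ref{lem:excellentgluing}. Your treatment of $(2,3,4)$ and $(4,3,2)$ agrees with the paper's: in both cases the filling yields three exceptional fibers, normalized Euler number $-2$, and Condition~\eqref{condition:-(n-1)} is verified for $k\le 0$ essentially as you describe.

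However, your justification in the case $(3,2,3)$ contains a genuine error. You count the filled manifold as a ``four-fiber'' Seifert space and invoke Condition~\eqref{condition:fibers} with $b=-2=-(n-2)$. But the fourth ``fiber'' you are counting is the integer fiber $\tfrac{1}{1}$ of $\Sigma_3(T_{2,3}) = M(-\tfrac12,-\tfrac12,-\tfrac12,\tfrac11)$, which is not an exceptional fiber: upon normalization it is absorbed into the Euler number. Here $\widetilde J$ consists of the three multiplicity-$2$ fibers; removing them and filling by $\talpha_k$ (in the subcase $\ttau_i = 3\tgamma_i - \tphi_i$, $\tsigma_i = -2\tgamma_i + \tphi_i$) gives $M\bigl(1, \tfrac{k-1}{3-2k},\tfrac{k-1}{3-2k},\tfrac{k-1}{3-2k}\bigr)$, whose normalized form has exactly \emph{three} exceptional fibers and $b=-2$. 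For three exceptional fibers, Condition~\eqref{condition:fibers} reads $-1\le b\le -2$, which is vacuous, so the step you propose would fail as stated. The repair is the same maneuver you use in the other two cases: here $b = -2 = -(n-1)$, so one is in Condition~\eqref{condition:-(n-1)}, verified by checking Condition~\eqref{condition:-1} for $-\tW(\talpha_k,\talpha_k,\talpha_k) = M\bigl(-1, \tfrac{k-1}{2k-3},\tfrac{k-1}{2k-3},\tfrac{k-1}{2k-3}\bigr)$ with $m=2$, $a=1$, since $0 < \tfrac{k-1}{2k-3} < \tfrac12$ when $k\le 0$. In the other subcase ($\ttau_i = 3\tgamma_i - 2\tphi_i$, $\tsigma_i = 2\tgamma_i - \tphi_i$) one instead obtains $-M\bigl(-1,\tfrac{k+2}{2k+3},\tfrac{k+2}{2k+3},\tfrac{k+2}{2k+3}\bigr)$, and $0 < \tfrac{k+2}{2k+3} < \tfrac12$ for $k\ll 0$, so the same condition applies. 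With this correction the proof goes through and coincides with the paper's.
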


\begin{proof} 
We use the same arguments as in Proposition~\ref{prop:exceptional}.  As before, it suffices to show that $\widetilde{W}(\widetilde\alpha_k,\ldots,\widetilde\alpha_k)$ admits a horizontal foliation.  Recall that $\ttau = \omega\tgamma + \eta \tphi$, where $\omega = \frac{nq}{\gcd(n,q)\gcd(n,p)}$, and we solve for the possible values of $\eta$ using $\ttau \cdot \tsigma = 1$ .  \\

\noindent {\bf Case 1:} $(n,p,q) = (2,3,4)$. \\
In this case $\widetilde J$ has two components, each mapping homeomorphically to $J$.  By \cite[Theorem~1]{NR}, $\Sigma_2 (T_{3,4}) = M(\frac12, -\frac13, -\frac13)$.   
Since $J$ has multiplicity~3, each component of $\widetilde J$ is a fiber of multiplicity~3.  
Thus, $\widetilde\sigma_i = \pm(-3\widetilde\gamma_i + \widetilde\varphi_i)$. In this case, there is only one choice for $\eta$, and we find $\tsigma_i = -3\tgamma_i + \tphi_i$ and $\ttau_i = 4\tgamma_i - \widetilde\varphi_i$, for $i = 0,1$.

We have $\Sigma_2 (C_{3,4}(K)) \cong X_0 \cup X_1 \cup \widetilde W$, where  $X_i$ is a copy of $X$, glued along the two boundary components of $\widetilde W$. 
On $\partial X_i$ the slope $\alpha_k = \mu_i +k\lambda_i$ is identified with $\widetilde\tau_i + k\widetilde\sigma_i = (4-3k)\widetilde\gamma_i + (k-1)\widetilde\varphi_i$. 
Hence $\widetilde W(\alpha_k, \alpha_k)$ is the Seifert fibered space $M(\frac12, \frac{k-1}{4-3k}, \frac{k-1}{4-3k})  = -M( -1,\frac12, \frac{k-1}{3k-4}, \frac{k-1}{3k-4})$.
Since $0<\frac{k-1}{3k-4} < \frac13$ if $k\le 0$, $\widetilde W(\alpha_k,\alpha_k)$ has a horizontal foliation by Condition \eqref{condition:-(n-1)} of Theorem~\ref{thm:brwcondition}.  \\

\noindent {\bf Case 2:} $(n,p,q) = (4,3,2)$. \\
By \cite[Theorem 1]{NR}, $\Sigma_4 (T_{3,2}) = M(-\frac12, -\frac13, -\frac13,\frac11)$. The curve $J$ has multiplicity~3, $\widetilde J$ has two components, each mapping to $J$ by a covering map of degree~2, and each being a fiber of multiplicity~3. 
So, $\widetilde\sigma_i = \pm(-3\widetilde\gamma_i + \widetilde \varphi_i)$.  There is only one choice for $\eta$, and we see $\widetilde\tau_i = 4\widetilde\gamma_i - \widetilde\varphi_i$ and $\tsigma_i = -3\tgamma_i + \tphi_i$.  

The manifold $\Sigma_4 (C_{3,2}(K)) \cong X^{(2)}_0 \cup X^{(2)}_1 \cup \widetilde W$, where  each $X^{(2)}_i$ is a copy of $X^{(2)}$. 
The slope $\widetilde\alpha_k = \widetilde\mu_i + 2k\widetilde\lambda_i$ on $\partial X_i^{(2)}$ is identified with $\widetilde\tau_i + 2k\widetilde\sigma_i = (4-6k)\widetilde\gamma_i + (2k-1)\widetilde\varphi_i $. 
So $\widetilde W (\widetilde\alpha_k,\widetilde\alpha_k)$ is the Seifert manifold $M(1,-\frac12, \frac{2k-1}{4-6k},\frac{2k-1}{4-6k}) = -M(-1,\frac12,\frac{2k-1}{6k-4},\frac{2k-1}{6k-4})$.   
For $k\le 0$, we have $\frac{2k-1}{6k-4} < \frac13$, so the result follows from Condition~\eqref{condition:-(n-1)} of Theorem~\ref{thm:brwcondition}. \\

\noindent {\bf Case 3:} $(n,p,q) = (3,2,3)$. \\
By \cite[Theorem~1]{NR}, $\Sigma_3 (T_{2,3}) = M(-\frac12, -\frac12, -\frac12, \frac11)$.  In this case, 
$J$ has multiplicity~2, $\widetilde J$ has three components, and each component of $\widetilde J$ has multiplicity~2. 
Therefore, $\widetilde\sigma_i = \pm (-2\widetilde\gamma_i + \widetilde\varphi_i)$.  There are two cases.  

First, $\eta = -1$.  We have $\ttau_i = 3 \tgamma_i - \tphi_i$, $\tsigma_i = -2\tgamma_i + \tphi_i$.  We have $\Sigma_3 (C_{2,3}(K)) \cong X_0\cup X_1 \cup X_2 \cup \widetilde W$, where each $X_i$ is a copy of $X$.  The gluing of the boundaries identifies $\alpha_k = \mu_i + k\lambda_i$ with $\widetilde\tau_i + k\widetilde\sigma_i = (3-2k)\widetilde\gamma_i + (k-1)\widetilde\varphi_i$.  Therefore $\widetilde W (\alpha_k,\alpha_k,\alpha_k)$ is the Seifert fibered space $M(1, \frac{k-1}{3-2k}, \frac{k-1}{3-2k}, \frac{k-1}{3-2k}) = -M(-1,\frac{k-1}{2k-3},\frac{k-1}{2k-3},\frac{k-1}{2k-3})$.   
Since $0<\frac{k-1}{2k-3} < \frac12$ if $k\leq 0$, Condition \eqref{condition:-(n-1)} of Theorem~\ref{thm:brwcondition} implies that $\widetilde W$ has a horizontal foliation.

Second, $\eta = -2$.  We have $\ttau_i = 3 \tgamma_i - 2\tphi_i$, $\tsigma_i = 2\tgamma_i - \tphi_i$.  Therefore $\widetilde W (\alpha_k,\alpha_k,\alpha_k)$ is the Seifert fibered space $-M(-1, \frac{k+2}{2k+3}, \frac{k+2}{2k+3}, \frac{k+2}{2k+3})$. 
Since $0<\frac{k+2}{2k+3} < \frac12$ if $k \ll 0$, Condition \eqref{condition:-1} of Theorem~\ref{thm:brwcondition} implies that $\widetilde W$ has a horizontal foliation.
\end{proof}

The following completes the proof of Theorem~\ref{thm:cables}.

\begin{proposition}\label{prop:1q}
Let $K$ be a non-trivial knot in $S^3$. 
Unless $n=q=2$, $\Sigma_n (C_{1,q}(K))$ is excellent.
\end{proposition}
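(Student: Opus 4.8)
The plan is to run the same strategy as in Propositions~\ref{prop:exceptional} and~\ref{prop:234}, the new feature being that for $p=1$ the torus knot $T_{1,q}$ is the unknot, so $\Sigma_n(T_{1,q})=S^3$ and the core $J$ of $V$ is a \emph{regular} fiber. Writing $d=\gcd(n,q)$, each component of $\widetilde J=\pi^{-1}(J)$ is a regular fiber of the induced Seifert fibration of $S^3$, so $\widetilde W=\Sigma_n(T_{1,q})-\coprod_{i=1}^{d}\operatorname{int}(\widetilde V_i)$ is Seifert fibered over the $d$-holed sphere with a single exceptional fiber, the lift $\widetilde c$ of the order-$q$ fiber (the core of $W$), whose multiplicity is $q/d$. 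Equivalently, $\widetilde W$ is the exterior of the torus link $T_{d,q}=T_{d\cdot1,\,d\cdot(q/d)}$, which is exactly the type of manifold treated in Proposition~\ref{prop:negtorus} with $r=1$, $s=q/d$. First I would verify, as in the computations preceding Proposition~\ref{prop:exceptional}, that the family of slopes $\talpha_k=\ttau+\tfrac{n}{d}k\,\tsigma$ on $\partial\tW$ is, for $k\ll 0$, the multislope of surgery coefficients $(-k_1,\dots,-k_d)$ with all $k_i\to\infty$, so that $\tW(\talpha_k,\dots,\talpha_k)=S^3_{-\bfk}(T_{d,q})$.

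Granting this identification, the bulk of the proof is immediate: whenever the hypotheses of Proposition~\ref{prop:negtorus} are met, $(\talpha_k,\dots,\talpha_k)$ is an excellent multislope for $\tW$ for $k\ll 0$. By Lemma~\ref{lem:cover1/n}, $\talpha_k$ is also an excellent slope for each $\widetilde X_i=X^{(n/d)}$ once $|k|$ is large, and Lemma~\ref{lem:excellentgluing} then shows $\Sigma_n(C_{1,q}(K))$ is excellent. Reading off when Proposition~\ref{prop:negtorus} applies (with $r=1$): it requires $d\geq 2$, and, in the case $q/d=1$ where the link is $T_{d,d}$ (so $r=s=1$), it requires $d\geq 3$. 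Since $q/d=1$ exactly when $q\mid n$ (i.e.\ $d=q$), this covers every pair $(n,q)$ with $\gcd(n,q)\geq 2$ except $q=d=2$, that is, except $q=2$ with $n$ even.

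For the residual case $q=2$, $n$ even, the exceptional fiber $\widetilde c$ has multiplicity $q/d=1$, so $\tW\cong T^2\times I$ is merely a collar and $\Sigma_n(C_{1,2}(K))\cong X^{(n/2)}_0\cup_f X^{(n/2)}_1$ is the union of two copies of $X^{(n/2)}$ along their boundary. Here I would instead apply Lemma~\ref{lem:excellentgluing}, cutting along the single torus, after choosing the common interface slope $\tmu+\tfrac{n}{2}k\,\tlambda$, which is excellent for $X^{(n/2)}$ by Lemma~\ref{lem:cover1/n} and is respected by the product gluing $f$. This succeeds precisely when $n/2\geq 2$, i.e.\ $n\geq 4$; the only pair it leaves out is $n=q=2$, which is exactly the excluded case (and the content of Theorem~\ref{thm:conditional}): there $X^{(1)}=X$ and the gluing forces the interface slope to be $\tfrac12$, whose CTF status is the Li--Roberts conjecture rather than Lemma~\ref{lem:cover1/n}.

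The main obstacle is the case $d=\gcd(n,q)=1$, which Proposition~\ref{prop:negtorus} does not reach (there $r=1$ and $d=1$). Now $\tW$ is the exterior of $T_{1,q}$, i.e.\ a solid torus, so $\tW(\talpha_k)$ is always a lens space and the horizontal-foliation method breaks down. Instead $\Sigma_n(C_{1,q}(K))=X^{(n)}(\beta)$ is a single Dehn filling, and a computation of the meridian of $\tW$ against the gluing data $\tmu=\ttau$, $\tlambda=\tsigma$ is expected to give a slope of the form $\beta=c\,\tmu+(nc-1)q\,\tlambda$; since its $\tlambda$-coefficient is not divisible by $n$, $\beta$ is not the lift of a $\tfrac1k$-slope and Lemma~\ref{lem:cover1/n} does not apply off the shelf. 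I expect the crux to be showing that this particular $\beta$ is nonetheless an excellent slope for $X^{(n)}$: for the taut foliation, by pushing forward a Li--Roberts foliation on $X$ realizing the cabling fiber slope $\tfrac1q$ (which lies in the Li--Roberts interval once $q$ is large) and checking that its boundary behaviour on $\partial X^{(n)}$ matches $\beta$; and for left-orderability, via a non-zero degree map from $X^{(n)}(\beta)$ onto a Dehn filling of $X$ along a known LO slope, in the spirit of the proof of Lemma~\ref{lem:cover1/n}. Verifying that the topology forces $\beta$ into the admissible range is where I anticipate the argument to be most delicate.
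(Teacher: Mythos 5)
There is a genuine gap, and it originates in your identification of $\widetilde W$. You argue that since each component of $\widetilde J$ is a regular fiber, the induced Seifert fibration on $\Sigma_n(T_{1,q})\cong S^3$ has a single exceptional fiber (the lift of the core of $W$) of multiplicity $q/d$, so that $\widetilde W$ is the exterior of $T_{d,q}$, i.e.\ the case $r=1$, $s=q/d$ of Proposition~\ref{prop:negtorus}. This forgets the branch curve: $\widetilde C=\pi^{-1}(C)$ lies in the interior of $\widetilde W$ and is itself an exceptional fiber, of multiplicity $n/d$, of the induced fibration. Indeed, on $\partial N(C)$ the regular fibers have class $\ell_0+q\mu$ (the fiber framing and the $0$-framing differ by $\operatorname{lk}=q$), and since the branched cover unwraps $\mu$ a total of $n$ times, the fibers upstairs near $\widetilde C$ have class $\tfrac{n}{d}\widetilde\ell_0+\tfrac{q}{d}\widetilde\mu$; hence $\widetilde C$ has multiplicity $n/d$. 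Consequently the regular fibers upstairs are $T_{n/d,\,q/d}$ torus knots, $\widetilde J$ is the torus link $T_{n,q}=T_{dr,ds}$ with $r=n/d$, $s=q/d$ (this is what the paper establishes, via the isotopy exchanging $C$ and $J$), and $\widetilde W$ is the exterior of $T_{n,q}$, not of $T_{d,q}$; the two agree only when $n\mid q$. Concretely: for $(n,q)=(3,2)$ your identification makes $\widetilde W$ a solid torus, whereas it is the trefoil exterior; for $(n,q)=(4,2)$ you get $\mathbb{T}^2\times I$, whereas $\widetilde W$ is the exterior of $T_{4,2}$, a Seifert fibered space over the annulus with one exceptional fiber of multiplicity $2$.

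This one error breaks all three branches of your argument. First, in the main case you apply Proposition~\ref{prop:negtorus} to the wrong link, so excellence of the multislope on $\widetilde W$ does not follow. Second, your residual case $q=2$, $n$ even, $n\geq 4$ rests on $\widetilde W\cong \mathbb{T}^2\times I$, which is false once $n\geq 4$ (and even granting it, the assertion that the product gluing ``respects'' the interface slope would require the kind of explicit slope computation the paper performs for $(p,2)$-cables, where only very special slopes are matched). Third, and most importantly, the case $d=\gcd(n,q)=1$, which you single out as the main obstacle and address only with speculation, is not hard once $\widetilde W$ is identified correctly: there $\widetilde W$ is the exterior of the torus \emph{knot} $T_{n,q}$ with $r=n\geq 2$ and $s=q\geq 2$, which is precisely the case $d=1$ permitted by Proposition~\ref{prop:negtorus}; in particular $\Sigma_n(C_{1,q}(K))$ is not a Dehn filling of $X^{(n)}$ but the union of $X^{(n)}$ with a nontrivial torus knot exterior. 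With the correct identification the proof is short and uniform: writing $\widetilde\tau_i=\widetilde\rho_i+c\,\widetilde\sigma_i$, where $\widetilde\rho_i$ is the $0$-framed longitude of the $i$-th component of $T_{n,q}$, the slope $\widetilde\alpha_k$ is identified with the integer surgery slope $\widetilde\rho_i+(c+rk)\widetilde\sigma_i$, and Proposition~\ref{prop:negtorus} applies for $k\ll 0$ in every case except $r=s=1$, $d=2$, i.e.\ exactly $n=q=2$; then Lemma~\ref{lem:cover1/n} and Lemma~\ref{lem:excellentgluing} finish the argument as you intended.
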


\begin{proof}
Let $d = \gcd(n,q)$ and let $r = \frac{n}{d}$ and $s = \frac{q}{d}$.  Note that $\gcd(r,s) = 1$.  From the discussion immediately preceding Proposition~\ref{prop:generic}, we have that $\Sigma_n(C_{1,q}(K)) = \widetilde{W} \cup \coprod^d_{i=1} X^{(r)}_i $, where 
\[
\widetilde{W} = \Sigma_n(T_{1,q}) - \coprod^d_{i=1} int(\widetilde{V}_i) = S^3 - int(N(\widetilde{J})).
\]
To identify $\widetilde{J} \subset S^3$, it is convenient to note that there is an isotopy of $S^3$ that interchanges the components of $C$ and $J$.  With this picture in mind, it is then clear that $\widetilde{J}$ is the $(n,q)$-torus link $T_{n,q} = T_{dr,ds}$.  Let $\widetilde{J}_1,\ldots,\widetilde{J}_d$ denote the components of $\widetilde{J}$.  

As before, let $\widetilde{\sigma}_i$ be the meridian of $\widetilde{V}_i$ and $\widetilde \tau_i \subset \partial \widetilde{V}_i$, the lift of $\tau$.  Let $\widetilde{\rho}_i \subset \partial \widetilde{V}_i$ be the 0-framed longitude of $\widetilde{J}_i \subset \Sigma_n(T_{1,q}) \cong S^3$.  Since $\widetilde \tau_i \cdot \widetilde{\sigma}_i = 1$ on $\partial \widetilde{W}$, we have $\widetilde \tau_i = \widetilde \rho_i + c \widetilde \sigma_i$ for some $c \in \mathbb{Z}$.  Let $\alpha_k$ be the slope $\mu + k\lambda$ on $\partial X$, and $\widetilde{\alpha}_k = \widetilde \mu_i + rk\widetilde \lambda_i$ the corresponding slope on $\partial X^{(r)}_i$.  Under the gluing homeomorphism $\partial X^{(r)}_i \to \partial \widetilde{V}_i \subset \partial \widetilde W$, we have that $\widetilde{\alpha}_k$ is mapped to $\widetilde \tau_i + rk \widetilde \sigma_i = \widetilde \rho_i + (c + rk) \widetilde \sigma_i$.  By Proposition~\ref{prop:negtorus}, for $k$ sufficiently negative, the multislope $(\widetilde{\alpha}_k,\ldots,\widetilde{\alpha}_k)$ is excellent for $\widetilde{W}$, unless $n = q = 2$.  By Lemma~\ref{lem:cover1/n}, we have that for $k \ll 0$, $\widetilde{\alpha}_k$ is an excellent slope for for each $X^{(r)}_i$.  The result again follows from Lemma~\ref{lem:excellentgluing}.     
\end{proof}

\subsection{Cyclic branched covers of $(p,2)$-cables}
We now prove Theorems~\ref{thm:conditional} and \ref{thm:cablefail}.  Let $p \geq 1$ be odd.  In this case, the link $C \cup J$ from the discussion just before Proposition~\ref{prop:generic} is illustrated in Figures~\ref{fig:cables2fig1} and ~\ref{fig:cables2fig2}, which show the cases $p = 5$ and $p = 1$ respectively.  For the special case of $p = 1$, we relabel the components as $C' \cup J'$, with associated meridian-longitude pair $\mu', \lambda'$.  

\begin{figure}
\labellist 
\large
\pinlabel  $C$ at 98 167
\pinlabel  $J$ at 130 77
\endlabellist
\includegraphics[scale=.7]{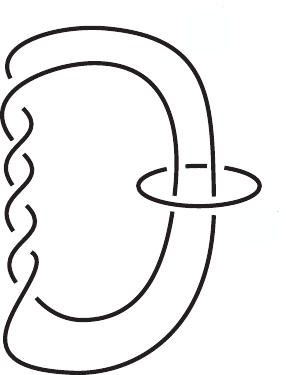}
\caption{The union of a regular fiber, $C$, and an exceptional fiber, $J$, of multiplicity $p = 5$, in a Seifert fibration of $S^3$.}\label{fig:cables2fig1}
\end{figure}
\begin{figure}
\labellist 
\large
\pinlabel  $C$ at 81 110
\pinlabel  $J$ at 114 42
\endlabellist
\includegraphics[scale=.8]{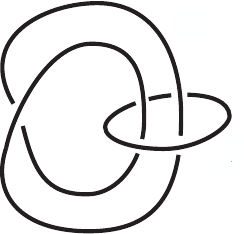}
\caption{The union of a regular fiber, $C$, and an exceptional fiber, $J$, of multiplicity $p = 1$ in a Seifert fibration of $S^3$.}\label{fig:cables2fig2}
\end{figure}

Now, for any $p$, the exterior of $C \cup J$ is in fact homeomorphic to $C' \cup J'$.  One way to see this is as follows.  The exterior of $C \cup J$ is the exterior of a regular fiber in the $\frac{2 \pi p}{2}$ Seifert fibered solid torus.  Therefore, applying $r = \frac{p-1}{2}$ negative meridional Dehn twists to this solid torus produces a $\frac{2 \pi 1}{2}$ Seifert fibered solid torus; this is the exterior of $C' \cup J'$.  In other words, there is a homeomorphism $h: \overline{W - N(C)} \to \overline{W' - N(C')}$ such that $h(\tau) = \tau'$ and $h(\sigma) = \sigma' - r \tau'$.  

We may interchange the components of $C' \cup J'$ to obtain the link in Figure~\ref{fig:cables2fig3}.  
\begin{figure}
\includegraphics[scale=.8]{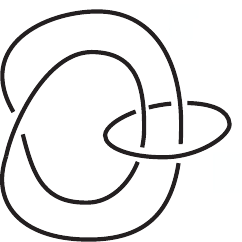}
\labellist 
\large
\pinlabel  $J'$ at 90 338
\pinlabel  $C'$ at 135 250
\endlabellist
\caption{The result of interchanging the components of $C' \cup J'$.}\label{fig:cables2fig3}
\end{figure}
From this, we can see that $\widetilde{J}' \subset \Sigma_2(T_{1,2}) \cong S^3$ is the Hopf link (see Figure~\ref{fig:cables2fighopf}).  

\begin{figure}
\includegraphics[scale=.65]{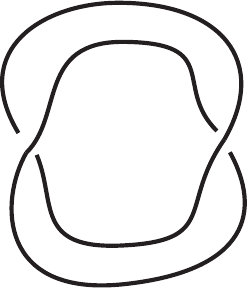}
\labellist 
\large
\pinlabel  $J$ at 85 335
\pinlabel  $C$ at 135 250
\endlabellist
\caption{The lift, $\widetilde{J}'$, of $J'$ in $\Sigma_2(T_{1,2})$.}\label{fig:cables2fighopf}
\end{figure}
Let the components of $\widetilde{J}'$ be $J'_0$ and $J'_1$, and let the corresponding lifts of $V'$ be $V'_0$ and $V'_1$.  Then $\widetilde W' \cong \mathbb{T}^2 \times I$, where $\partial V'_i = \mathbb{T}^2 \times \{i\}$, for $i = 0,1$.  Consequently, we have the corresponding decomposition $\Sigma_2(T_{p,2}) = V_0 \cup V_1 \cup \widetilde{W}$.  We would like to determine the gluing.  If $\alpha$ is a slope on $\partial V$ (respectively $\partial V'$), let $\alpha_i$ (respectively $\alpha'_i$) be the corresponding slope on $\partial V_i$ (respectively $\partial V'_i$), for $i = 0,1$.  Note that the homeomorphism $h$ lifts to a homeomorphism $\widetilde{h}:\widetilde W \to \widetilde W'$ such that $\widetilde h(\tau_i) = \tau_i$ and $\widetilde h(\sigma_i) = \sigma'_i - r\tau'_i$, for $i = 0,1$.  

Let $\beta'$ be the blackboard framed longitude of $J'$ in Figure~\ref{fig:cables2fig3}.  Then $\beta' = \tau' + \sigma'$, and $\beta'_i = \tau'_i + \sigma'_i$ is the 0-framed longitude of $J'_i$.  

Recall that $\Sigma_2(C_{p,2}(K)) \cong X_0 \cup X_1 \cup \widetilde{W}$, where $X_i$ is the copy of the exterior $X$ of $K$, for $i = 0,1$, and the gluing homeomorphism $\partial X_i \to \partial V_i$ takes $\lambda_i$ to $\sigma_i$ and $\mu_i$ to $\tau_i$, for $i = 0,1$.  Using the homeomorphism $\widetilde{h}$, $\Sigma_2(C_{p,2}(K)) \cong X_0 \cup X_1 \cup \widetilde{W}'$, where the gluing homeomorphism $\partial X_i \to \partial V'_i = \mathbb{T}^2 \times \{i\}$ now takes $\lambda_i$ to 
\[
\sigma'_i - r\tau'_i = \sigma'_i - r(\beta'_i - \sigma'_i) = (r+1) \sigma'_i - r\beta'_i,
\]
and takes $\mu_i$ to $\tau'_i = - \sigma'_i + \beta'_i$, for $i = 0,1$.

Thus, with respect to the ordered bases $\lambda_i, \mu_i$ and $\sigma'_i,\beta'_i$, this gluing homeomorphism is given by the matrix $A = \begin{pmatrix} r+1 & -1 \\ -r & 1 \end{pmatrix}$.  Since $\widetilde{W}' \cong \mathbb{T}^2 \times I$, with $\sigma'_0, \beta'_0$ homotopic to $\beta'_1, \sigma'_1$, respectively, it follows that $\Sigma_2(C_{p,2}(K)) \cong X_0 \cup X_1$, glued by the homeomorphism $f: \partial X_0 \to \partial X_1$ that is given with respect to the ordered bases $\lambda_0,\mu_0$ and $\lambda_1,\mu_1$, by the matrix $A^{-1}BA$, where $B = \begin{pmatrix} 0 & 1 \\ 1 & 0\end{pmatrix}$, i.e. by $\begin{pmatrix} 1 & 0 \\ 2r+1 & -1 \end{pmatrix} = \begin{pmatrix} 1 & 0 \\ p & -1 \end{pmatrix}$.  In other words, we have 
\begin{equation}\label{eqn:cp2}
\Sigma_2(C_{p,2}(K))  = X_0 \cup_f X_1, \; \; f(a \mu_0 + b \lambda_0) =  (pb-a)\mu_1 + b \lambda_1.  
\end{equation}
Observe that under this identification, a slope $\frac{a}{b}$ on $\partial X_0$ is sent to the slope $p - \frac{a}{b}$ on $\partial X_1$.

\begin{proof}[Proof of Theorem~\ref{thm:conditional}]
From \eqref{eqn:cp2}, we have $\Sigma_2(C_{1,2}(K)) = X_0 \cup X_1$, where the slope $\frac{a}{b}$ is identified with $1-\frac{a}{b}$.  The slopes $\frac{1}{2}$ on $\partial X_0$ and $\partial X_1$ are therefore identified.  If $\frac{1}{2}$ is a CTF slope for $K$, then $\frac{1}{2}$ is an excellent slope by Lemma~\ref{lem:toroidalzs3}.  Thus, $\Sigma_2(C_{1,2}(K))$ is excellent by Lemma~\ref{lem:excellentgluing}.
\end{proof}

\begin{proof}[Proof of Theorem~\ref{thm:cablefail}]
By \eqref{eqn:cp2}, $\Sigma_2(C_{p,2}(K)) \cong X_{0} \cup X_{1}$, where $X_0$ and $X_1$ are copies of the exterior $X$ of $K$, glued along their boundaries so that the slope $\frac{a}{b}$ on $\partial X_0$ is identified with the slope $p - \frac{a}{b}$ on $\partial X_1$. In particular, the meridians of $X_0$ and $X_1$ are identified. 

Let $\mathcal{L}(X)$ be the set of L-space slopes on $\partial X$, i.e. $\mathcal{L}(X) = \{\alpha \mid X(\alpha)\text{ is an L-space}\}$. By \cite{KMOS,HFKQ},
$$\mathcal{L}(X) = \{\infty \}, \quad [2g-1,\infty], \quad \textup{or} \quad [- \infty, 1-2g],$$ 
where $g$ is the genus of $K$. Here, the intervals are to be interpreted as being in $\mathbb{Q} \cup \{\infty \}$. Hence the meridian $\mu = \infty$ of $K$ is not in the interior $\mathcal{L}^{o}(X)$. Therefore, by \cite{HRW}, $\Sigma_2(C_{p,2}(K))$ is not an L-space.

If $K$ is a torus knot or iterated torus knot then $\Sigma_2(C_{p,2}(K))$ is a graph manifold. By \cite{BC} and \cite{HRRW}, for graph manifolds the properties of not being an L-space, having a co-orientable taut foliation, and having left-orderable fundamental group, are equivalent. This proves the second part of the theorem.  
\end{proof}

\begin{remark}\label{rmk:correction}
The mistake in the previous version of Theorem~\ref{thm:cablefail} occurs in the last three sentences of the original argument.  There it was incorrectly assumed that $\infty$ is not a foliation-detected slope; indeed, $\infty$ is a foliation-detected slope for the trefoil \cite[Corollary A.7]{BC}.
\end{remark}

\section{Whitehead doubles}\label{sec:whitehead}

Let $K$ be a knot in $S^3$ and let $Wh(K)$  be the {\em positive untwisted Whitehead double} of $K$.  This can be described as follows.  Consider the Whitehead link with a positive clasp, $C \cup J$, as shown in Figure~\ref{fig:whiteheadfigure1}.  
\begin{figure}
\labellist
\large
\pinlabel $C$ at 80 5
\pinlabel $J$ at 175 85
\endlabellist
\includegraphics[scale=.7]{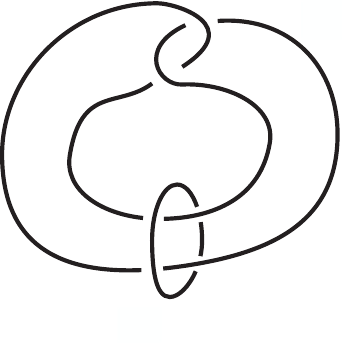}
\caption{The Whitehead link $C \cup J$.}\label{fig:whiteheadfigure1}
\end{figure}

Let $N(C)$  be a tubular neighborhood of $C$, disjoint from $J$, with meridian $m$ and 0-framed longitude $l$.  Let $X$ be the exterior of $K$, with meridian $\mu$ and longitude $\lambda$.  Remove $int N(C)$ from $S^3$ and replace it with $X$, identifying the torus boundaries in such a way that $m$ is identified with $\lambda$ and $l$ is identified with $\mu$.  In the resulting $S^3$, the image of $J$ is $Wh(K)$.  Observe that if one considers the negative untwisted double, where we instead use the Whitehead link with a negative clasp, we obtain the mirror of the positive untwisted Whitehead double of the mirror of $K$.  For this reason, we restrict our attention to positive untwisted Whitehead doubles.

To prove Theorem~\ref{thm:whitehead} we first give an explicit description of $\Sigma_n(Wh(K))$.  Since the components of the Whitehead link are interchangeable, we can redraw the link as in Figure~\ref{fig:whiteheadfigure2}.

\begin{figure}
\labellist
\large
\pinlabel $J$ at 80 5
\pinlabel $C$ at 175 85
\endlabellist
\includegraphics[scale=.7]{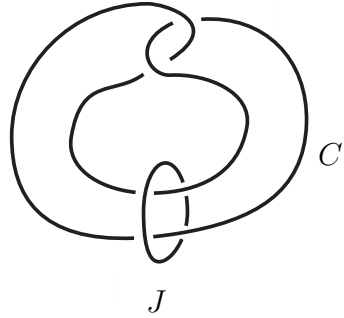}
\caption{The Whitehead link after interchanging the components in Figure~\ref{fig:whiteheadfigure1}.}\label{fig:whiteheadfigure2}
\end{figure}

The $n$-fold cyclic branched cover of $(S^3,J)$ is $S^3$, and the inverse image of $C$ under the covering projection is the $n$-component chain link $L_n$ shown in Figures~\ref{fig:whiteheadfigure3} and \ref{fig:whiteheadfigure4}, which illustrate the cases $n = 2$ and $n = 5$ respectively.  
\begin{figure}
\labellist
\large
\pinlabel $C_0$ at 0 55
\pinlabel $C_1$ at 205 115
\endlabellist
\includegraphics[scale=.6]{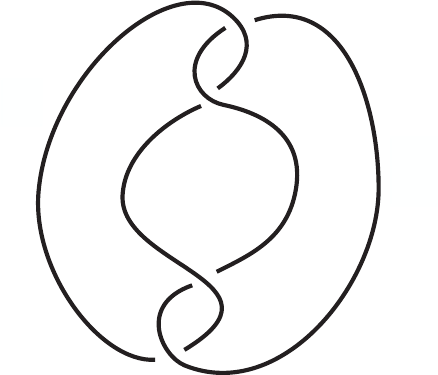}
\caption{The 2-component chain link $L_2$.}\label{fig:whiteheadfigure3}
\end{figure}
\begin{figure}
\labellist
\large
\pinlabel $C_4$ at 20 80
\pinlabel $C_2$ at 305 115
\pinlabel $C_1$ at 265 260
\pinlabel $C_0$ at 40 250
\pinlabel $C_3$ at 185 15
\endlabellist
\includegraphics[scale=.55]{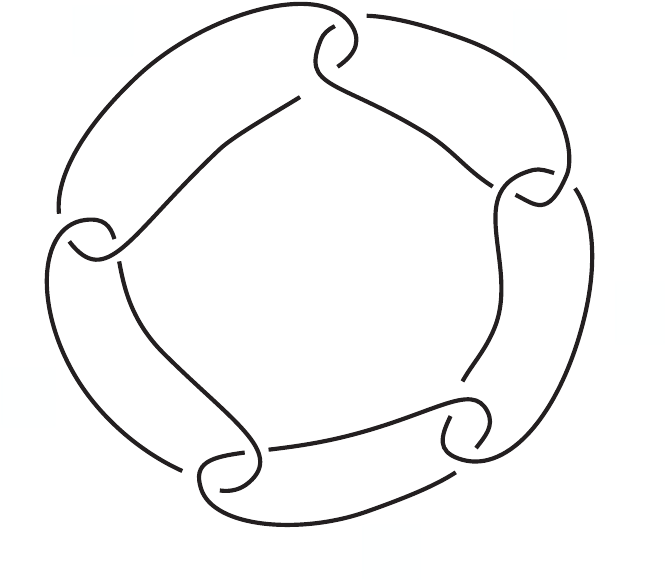}
\caption{The 5-component chain link $L_5$.}\label{fig:whiteheadfigure4}
\end{figure}
Since $\ell k (J,C) = 0$, we have that $\pi^{-1}(C)$ has $n$ components, $C_i$ for $i \in \Z/n$.  Let $m_i, l_i$ on $\partial N(C_i)$ be the lifts of $m,l$ on $\partial N(C)$.  Let $Y_n = S^3 - \coprod_{i \in \Z/n} int N(C_i)$ be the exterior of $L_n$.  Then, because $\ell k(C,J) = 0$, we have that $\Sigma_n(Wh(K))$ is obtained by gluing to $Y_n$, for each $i$, a copy $X_i$ of $X$ along $\partial N(C_i)$ in such a way that $m_i$ is sent to $\lambda_i$ and $l_i$ is sent to $\mu_i$, where $\mu_i, \lambda_i$ are the corresponding copies of $\mu,\lambda$ on $\partial X_i$.    

Let $b$ be the blackboard framing of $C$ corresponding to Figure~\ref{fig:whiteheadfigure2}; see Figure~\ref{fig:whiteheadfigure5}.  
\begin{figure}
\labellist
\Large
\pinlabel $C$ at 193 123
\pinlabel $b$ at 152 100
\endlabellist
\includegraphics[scale=.8]{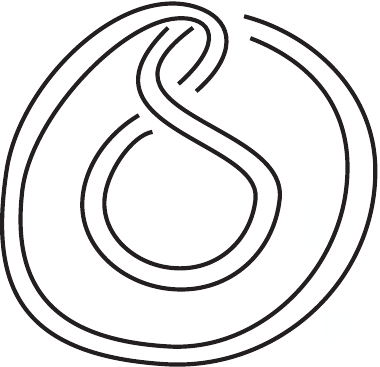}
\caption{The blackboard framing of $C$ from Figure~\ref{fig:whiteheadfigure2}.}\label{fig:whiteheadfigure5}
\end{figure}
Then, in $H_1(\partial N(C))$, we have $b = 2m + l$.  The lift $b_i$ of $b$ in $\partial N(C_i)$ is the 0-framed longitude of $C_i$ (see Figure~\ref{fig:whiteheadfigure6} for the case $n = 2$), and $l_i = -2m_i + b_i$.  
\begin{figure}
\labellist
\Large
\pinlabel $C_1$ at 204 125
\pinlabel $b_1$ at 158 110
\pinlabel $C_0$ at -10 45
\pinlabel $b_0$ at 35 60
\endlabellist
\includegraphics[scale=.75]{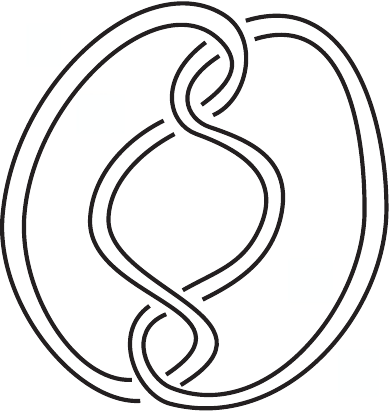}
\caption{The lifts of the blackboard framing of $C$ from Figure~\ref{fig:whiteheadfigure2}.}\label{fig:whiteheadfigure6}
\end{figure}

\begin{proof}[Proof of Theorem~\ref{thm:whitehead}]
We first consider the case of two-fold branched covers.  Note that the link $L_2$ is the negative torus link $T_{-2,4}$.  Consider the slope $\mu_i + n_i \lambda_i$ on $X_i$.  Note that this slope corresponds to $\frac{1}{n_i}$ on $K$.  For $|n_i| \gg 0$ this is an excellent slope by Lemma~\ref{lem:1/n}.  Further, by the above discussion, the slope $\mu_i + n_i\lambda_i$ is identified with the slope $(n_i - 2)m_i + b_i$.  Since $b_i$ represents the longitude on the $i$th component of $T_{-2,4}$, we have that the slopes $\mu_1 + n_1 \lambda_1$ and $\mu_2 + n_2 \lambda_2$ on $X_1$ and $X_2$ respectively are identified with the multislope $(\frac{n_1-2}{1},\frac{n_2-2}{1})$ on the exterior of $T_{-2,4}$.  By Proposition~\ref{prop:negtorus}, we have that for $n_1,n_2 \gg 0$, $(\frac{2-n_1}{1}, \frac{2 - n_2}{1})$ is an excellent multislope on the exterior of $T_{2,4}$.  Therefore, we have that $(\frac{n_1-2}{1},\frac{n_2-2}{1})$ is an excellent multislope on the exterior of $T_{-2,4}$ for $n_1,n_2 \gg 0$.  It now follows from Lemma~\ref{lem:excellentgluing} that $\Sigma_2(Wh(K))$ is an excellent manifold.  Since $\Sigma_{2n}(Wh(K))$ is a branched cover of $\Sigma_2(Wh(K))$, Theorem~\ref{thm:brw} shows that $\pi_1(\Sigma_{2n}(Wh(K))$ is left-orderable for all $n$.  

We now study the higher-order branched covers.  There is a degree one map from $X$ to $S^1 \times D^2$ which restricts to a homeomorphism $\partial X \to \partial(S^1 \times D^2)$ mapping $\lambda$ to a meridian of $S^1 \times D^2$ and $\mu$ to a longitude of $S^1 \times D^2$.  Applying this map to each $X_i$ except $X_0$ defines a degree 1 map $f: \Sigma_n(Wh(K)) \to M$, where $M$ is obtained from $Y_n$ by filling in each $N(C_i)$ for $i \neq 0$, while leaving $X_0$ attached along $\partial N(C_0)$ as before.  

Now, $S^3 - int N(C_0)$ is a solid torus whose meridian is $b_0$.  Since $b_0 = 2m_0 + l_0$ is identified with $2 \lambda_0 + \mu_0$ in $\partial X_0$, $M$ is homeomorphic to $S^3_{\frac{1}{2}}(K)$.  Suppose first that $\pi_1(S^3_{\frac{1}{2}}(K))$ is left-orderable.  Since $f$ has degree one and $\Sigma_n(Wh(K))$ is irreducible, we have by Theorem~\ref{thm:brw} that $\pi_1(\Sigma_n(Wh(K)))$ is left-orderable.  

Now assume that $n \geq 3$ and apply the procedure described above to each $X_i$ except $X_0$ and $X_1$.  This gives a degree 1 map from $\Sigma_n(Wh(K))$ to $Q$, the manifold obtained by attaching $X_0$ and $X_1$ to $E = S^3 - (int (N(C_0)) \cup int (N(C_1)))$, the exterior of the Hopf link, which is homeomorphic to $\mathbb{T}^2 \times I$.  In other words, $Q \cong X_0 \cup_h X_1$, for some gluing homeomorphism $h: \partial X_0 \to \partial X_1$.  To determine $h_* :H_1(\partial X_0) \to H_1(\partial X_1)$, we note that the homeomorphism $\partial N(C_0) \to \partial N(C_1)$ given by the product structure on $E$ sends $m_0$ to $b_1$ and $b_0$ to $m_1$, since the $b_i$ are longitudes for $C_i$.  Then, we have that 
\[
h_*: \mu_0 \mapsto l_0 = -2m_0 + b_0 \mapsto -2b_1 + m_1 \mapsto -2(\mu_1 + 2\lambda_1) + \lambda_1 = -2\mu_1 -3\lambda_1
\]
and
\[
\lambda_0 \mapsto m_0 \mapsto b_1 \mapsto \mu_1 + 2\lambda_1.  
\]  

Thus, with respect to the ordered bases $\mu_0, \lambda_0$ and $\mu_1,\lambda_1$, $h_*$ is given by the matrix $\begin{pmatrix} -2 & 1 \\ -3 & 2 \end{pmatrix}$.  

Let $\alpha_0$ be the slope $\frac{1}{k} = \mu_0 + k\lambda_0$ on $\partial X_0$.  Then, $h_*(\alpha_0) = (k-2)\mu_1 + (2k-3)\lambda_1$ on $\partial X_1$.  This will be of the form $\frac{1}{k'}$ if and only if $k = 1$ or $k=3$, in which cases we have $h_*(\frac{1}{1}) = \frac{1}{1}$ and $h_*(\frac{1}{3}) = \frac{1}{3}$.  

Observe that $Q$ is irreducible.  It follows from Theorem~\ref{thm:jsjlo} that if $\pi_1(S^3_{1}(K))$ or $\pi_1(S^3_{\frac{1}{3}}(K))$ is left-orderable, then so is $\pi_1(Q)$. Therefore, since we have a degree one map from $\Sigma_n(Wh(K))$ onto $Q$, we may again apply Theorem~\ref{thm:brw} to conclude that $\pi_1(\Sigma_n(Wh(K))$ is left-orderable.  
\end{proof}

We have the following obvious corollary.  
\begin{corollary}\label{cor:higherwhite}
Let $K$ be a knot in $S^3$ that satisfies the Li-Roberts Conjecture.  Then $\pi_1(\Sigma_n(Wh(K))$ is left-orderable for all $n \geq 2$.  
\end{corollary}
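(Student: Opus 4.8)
The plan is to verify the hypothesis of the second statement of Theorem~\ref{thm:whitehead} by exhibiting a slope $\alpha \in \{1, \frac{1}{2}, \frac{1}{3}\}$ for which $\pi_1(S^3_\alpha(K))$ is left-orderable; the conclusion then drops out immediately. The key observation is that the Li-Roberts interval in Conjecture~\ref{conj:lr} is the \emph{open} interval $(-1,1)$. Thus the integer slope $\alpha = 1$ sits on its boundary and is \emph{not} covered, whereas both fractional slopes $\frac{1}{2}$ and $\frac{1}{3}$ lie in $(-1,1)$. It is precisely to accommodate this that Theorem~\ref{thm:whitehead} lists these three alternatives, and the corollary exploits the fractional ones. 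Note that $K$ is non-trivial, since the Li-Roberts Conjecture is a statement about non-trivial knots and Theorem~\ref{thm:whitehead} likewise requires non-triviality.

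I would take $\alpha = \frac{1}{2}$. Since $K$ is non-trivial and satisfies the Li-Roberts Conjecture, and $\frac{1}{2} \in (-1,1)$, the slope $\frac{1}{2}$ is a CTF slope for $K$. Hence the Dehn filling $S^3_{1/2}(K) = X(\frac{1}{2})$, where $X$ is the exterior of $K$, admits a co-orientable taut foliation. Because $H_1(S^3_{1/2}(K)) = 0$, this manifold is an integer homology sphere, so Lemma~\ref{lem:toroidalzs3} (Boileau-Boyer) applies and forces $\pi_1(S^3_{1/2}(K))$ to be left-orderable. (Equally well one could run this argument with $\frac{1}{3}$ in place of $\frac{1}{2}$, since $\frac{1}{3} \in (-1,1)$ and $S^3_{1/3}(K)$ is also a homology sphere.)

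With $\alpha = \frac{1}{2}$ now known to give a left-orderable surgery, the second half of Theorem~\ref{thm:whitehead} yields that $\pi_1(\Sigma_n(Wh(K)))$ is left-orderable for all $n \geq 2$, as desired. I do not anticipate any genuine obstacle here: the entire content is the chain ``CTF slope $\Rightarrow$ taut foliation on the filling $\Rightarrow$ left-orderable $\pi_1$ (via the homology-sphere hypothesis of Lemma~\ref{lem:toroidalzs3}) $\Rightarrow$ Theorem~\ref{thm:whitehead}.'' The only point demanding care is the bookkeeping that $1 \notin (-1,1)$, so one must route the argument through a fractional slope rather than the integer slope $1$, relying on the fact that $\frac{1}{2}$- (or $\frac{1}{3}$-) surgery produces an integer homology sphere so that Lemma~\ref{lem:toroidalzs3} is available.
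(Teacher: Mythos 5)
Your proof is correct and is exactly the argument the paper intends for this ``obvious corollary'': the Li--Roberts Conjecture makes $\frac{1}{2}$ (or $\frac{1}{3}$) a CTF slope, the resulting integer homology sphere $S^3_{1/2}(K)$ is then left-orderable by Lemma~\ref{lem:toroidalzs3}, and the second statement of Theorem~\ref{thm:whitehead} finishes. Your side remarks---that $1 \notin (-1,1)$ forces the use of a fractional slope, and that the hypothesis implicitly forces $K$ to be non-trivial---are both accurate and handled the same way (silently) in the paper.
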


\section{Some excellent cyclic branched covers of two-bridge knots}\label{sec:twobridgesurgery}

Consider the two-bridge knots corresponding to rational numbers of the form 
\[
\frac{2(2k+1)(2\ell +1) +\ep}{(2\ell +1)} =  [2(2k+1),\ep (2\ell +1)],
\]
where $k,\ell \ge 1$ and $\ep = \pm 1$.  By \cite[Corollary 11.2(b)]{Minkus1982},  $\Sigma_{2k+1} (K_{[2(2k+1),\ep (2\ell +1)]})$ is an integer homology sphere.  By Lemma~\ref{lem:pi1surjective}, $\Sigma_n (K_{[2(2k+1), \ep (2\ell+1)]})$ is also an integer homology sphere if $n$ divides $(2k+1)$.  We are interested in the case of $\ep = +1$ for Theorem~\ref{thm:twobridgesurgery}.    

Before proving Theorem~\ref{thm:twobridgesurgery}, we give a corollary which easily follows from the theorem.  
\begin{corollary}\label{cor:zs3}
If $\gcd(n,2k+1) >1$ then $\pi_1 (\Sigma_n (K_{[2(2k+1), 2\ell+1]}))$ is left-orderable. 
\end{corollary}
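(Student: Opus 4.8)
The plan is to deduce the corollary from Theorem~\ref{thm:twobridgesurgery} by factoring the branched covering through an intermediate cover. Write $K = K_{[2(2k+1),2\ell+1]}$ for brevity and set $d = \gcd(n,2k+1)$; by hypothesis $d > 1$, and $d$ divides $2k+1$. The first step is to apply Theorem~\ref{thm:twobridgesurgery} with $d$ in place of $n$: since $d \mid (2k+1)$ and $d > 1$, the manifold $\Sigma_d(K)$ is excellent, so in particular $\pi_1(\Sigma_d(K))$ is left-orderable.

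Next, since $d$ divides $n$, the branched covering projection factors as $p_n = p_d \circ p_{n,d}$, giving a branched cover $p_{n,d}\colon \Sigma_n(K) \to \Sigma_d(K)$. By Lemma~\ref{lem:pi1surjective}, the induced map $(p_{n,d})_*\colon \pi_1(\Sigma_n(K)) \to \pi_1(\Sigma_d(K))$ is surjective. As $\pi_1(\Sigma_d(K))$ is left-orderable, it is in particular non-trivial, so this surjection is a non-zero homomorphism from $\pi_1(\Sigma_n(K))$ onto a left-orderable group.

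Finally I would invoke Theorem~\ref{thm:brw}. Since $K$ is a two-bridge knot it is prime, and hence $\Sigma_n(K)$ is prime (indeed irreducible); the exceptional $\mathbb{R}P^3$ case cannot occur, as $\mathbb{R}P^3$ admits no non-zero homomorphism from its fundamental group to a left-orderable group, while we have just produced one. Theorem~\ref{thm:brw} then yields that $\pi_1(\Sigma_n(K))$ is left-orderable, as desired. The argument is genuinely short: the only place something could go wrong is the non-triviality of the homomorphism, which is exactly why the surjectivity statement of Lemma~\ref{lem:pi1surjective} (rather than merely the existence of a non-zero degree map) is the convenient input. Note also that the corollary asserts only left-orderability and not full excellence, which is consistent with this approach, since a taut foliation on $\Sigma_d(K)$ need not extend over the branch locus of $p_{n,d}$.
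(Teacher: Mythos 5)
Your proof is correct and takes essentially the same route as the paper's: both factor the branched covering through $\Sigma_d(K)$ with $d = \gcd(n,2k+1)$, apply Theorem~\ref{thm:twobridgesurgery} to conclude $\pi_1(\Sigma_d(K))$ is left-orderable, and finish with Theorem~\ref{thm:brw}. The only (immaterial) difference is that the paper invokes the non-zero degree map $p_{n,d}$ clause of Theorem~\ref{thm:brw} directly, whereas you use the surjectivity statement of Lemma~\ref{lem:pi1surjective}; either yields the required non-zero homomorphism.
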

\begin{proof}
Let $m = \gcd(n,2k+1)$.  Then, we have a non-zero degree map from $\Sigma_n (K_{[2(2k+1), 2\ell+1]})$ to $\Sigma_m(K_{[2(2k+1), 2\ell+1]})$.  Theorem~\ref{thm:twobridgesurgery} applies to $\Sigma_m(K_{[2(2k+1), 2\ell+1]})$ and the result now follows from Theorem~\ref{thm:brw}.
\end{proof}

Let $B(n,k,\ell,\text{sign}(\ep))$ denote the manifold $\Sigma_n (K_{[2(2k+1),\ep (2\ell+1)]})$.  Theorem~\ref{thm:twobridgesurgery} will be proved by studying the following surgery description of $B(n,k,\ell,\pm)$ when $n$ divides $(2k+1)$. 

\begin{lemma}\label{lem:pretzelsurgery}
Suppose $n$ divides $2k+1$, $n>1$, and let $d = (2k+1)/n$.  Let $P(2\ell+1,\ldots,2\ell+1)$ be the $n$-stranded pretzel knot with $(2\ell+1)$ right-handed half-twists in each strand.  Then 
\begin{itemize}
\item[(1)] $B(n,k,\ell,+) \cong -S^3_{\frac{1}{d}}(P(2\ell+1,\ldots,2\ell+1))$,
\item[(2)] $B(n,k,\ell,-) \cong S^3_{\frac{-1}{d}}(P (2\ell+1,\ldots, 2\ell+1))$.
\end{itemize}
\end{lemma}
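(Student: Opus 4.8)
The plan is to prove both parts together, carrying the sign of $\ep$ through the argument, and to exploit that $n\mid(2k+1)$ forces $n$ to be odd (as $2k+1$ is odd), so that the $n$-strand pretzel $P(2\ell+1,\dots,2\ell+1)$ with all parameters odd is genuinely a knot. The governing idea is to build a surgery diagram for $B(n,k,\ell,\pm)=\Sigma_n(K_{[2(2k+1),\ep(2\ell+1)]})$ that displays the $\Z/n$ branching symmetry and then push the surgery through the branched cover.

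First I would isotope $K=K_{[2(2k+1),\ep(2\ell+1)]}$ into its standard $4$-plat position, with one twist region of $2(2k+1)$ crossings and one of $\ep(2\ell+1)$ crossings, and encircle the large ($2(2k+1)$-crossing) region by an unknot $C$ whose spanning disk meets $K$ in the two strands running through that region. A Rolfsen twist shows that $K$ is the image of $K''$ under $\mp\frac{1}{2k+1}$-surgery on $C$, where $K''$ is obtained from $K$ by deleting the large twist region. Computing the continued fraction $[0,\ep(2\ell+1)]=\frac{\ep}{2\ell+1}$, whose numerator is $\pm1$, identifies $K''$ as the unknot, so $\Sigma_n(K'')=S^3$. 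The two strands through the large region are coherently oriented, whence $\ell k(C,K'')=\pm2$, a unit modulo the odd integer $n$.

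Next I would transport this description into the branched cover. Writing the exterior of $K$ as $\mathrm{ext}(K''\cup C)$ with the surgery solid torus of $C$ reglued, the homomorphism $\mu_K\mapsto 1\in\Z/n$ defining $\Sigma_n(K)$ restricts to the cover of $\mathrm{ext}(K''\cup C)$ sending $\mu_{K''}\mapsto 1$ and, forced by the condition that the cover extend over the filling, $\mu_C\mapsto 0$. Since $\ell k(C,K'')=\pm2$ is a unit mod $n$, the curve $C$ lifts to a single connected knot $\widetilde C$ covering $C$ with degree $n$, and the surgery solid torus lifts to a single solid torus; hence $\Sigma_n(K)$ is $\Sigma_n(K'')=S^3$ Dehn-filled along $\widetilde C$, i.e. honest surgery on the knot $\widetilde C\subset S^3$. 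Two points then have to be nailed down. First, identifying $\widetilde C$: as it wraps $n$ times through the branch-set neighborhood it threads $n$ successive copies of the small, $\ep(2\ell+1)$-crossing region, each passage contributing one band of $(2\ell+1)$ half-twists, so $\widetilde C=P(2\ell+1,\dots,2\ell+1)$, with the handedness of the half-twists governed by $\mathrm{sign}(\ep)$. Second, computing the lifted coefficient: pushing the slope $\mu_C\mp(2k+1)\lambda_C$ up through the degree-$n$ covering $\partial N(\widetilde C)\to\partial N(C)$ (under which $\mu_{\widetilde C}\mapsto\mu_C$ and $\lambda_{\widetilde C}\mapsto n\lambda_C$, up to a framing correction) turns $\mp\frac{1}{2k+1}$ into $\mp\frac{1}{(2k+1)/n}=\mp\frac1d$ precisely because $2k+1=nd$; this scaling is exactly why the hypothesis $n\mid(2k+1)$ is needed.

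The main obstacle, and where the real care is required, is this last bookkeeping: verifying that the framing defect of the longitude under the branched cover vanishes, so that the coefficient is exactly $\pm\frac1d$ rather than some $\frac{1\pm dj}{\mp d}$, and simultaneously pinning down the two orientation conventions so that $\ep=+1$ yields $-S^3_{1/d}(P)$ while $\ep=-1$ yields $S^3_{-1/d}(P)$. I would check these against the homology-sphere constraint used via \cite[Corollary 11.2(b)]{Minkus1982} (both $\pm\frac1d$-surgeries on a knot are integer homology spheres, as required) and against the degenerate case $d=1$, where the claim reduces to $\pm1$-surgery on $P(2\ell+1,\dots,2\ell+1)$ and can be matched directly with the pretzel-knot branched covers appearing in Theorem~\ref{thm:not left}. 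Finally, observing that $\ep\mapsto-\ep$ corresponds to mirroring the whole diagram—sending $P$ to its mirror and reversing the sign of the large-region twisting—confirms the internal consistency $-S^3_{1/d}(P)=S^3_{-1/d}(\overline P)$ of the two stated formulas.
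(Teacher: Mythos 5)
Your skeleton is the same as the paper's: realize the two-bridge knot by $\mp\frac{1}{2k+1}$-surgery on an unknot $C$ encircling the two coherently oriented strands of the even twist region, pass to the $n$-fold cover branched over the remaining unknot, use $\gcd(2,n)=1$ (with $n$ odd) to see that $C$ lifts to a single knot, identify that lift as the $n$-strand pretzel, and rescale the coefficient using $n\mid(2k+1)$. The genuine gap sits exactly where you flag ``the main obstacle'': you never actually prove that the framing defect vanishes, i.e.\ that the lift $\widetilde\lambda$ of the $0$-framed longitude of $C$ is the $0$-framed longitude of $\widetilde C$. The paper settles this in one line: $C$ bounds a disk $D$ meeting the branch set in two points, so $\pi^{-1}(D)$ is an orientable surface in the cover with boundary $\widetilde\lambda$; hence $\widetilde\lambda$ is nullhomologous in the exterior of $\widetilde C$ and is therefore the $0$-framed longitude. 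Your proposed substitutes do not do this job. The Minkus homology-sphere check, as you state it, only verifies that the advertised answer $\pm\frac1d$ is \emph{consistent} with the cover being an integer homology sphere; it cannot distinguish $\frac1d$ from $\frac{1+dj}{d}$ whenever $|1+dj|=1$ (e.g.\ $d=1$, $j=-2$). It could be upgraded to a proof by noting that the defect $j$ depends only on the link $C\cup J$ and the covering, not on $k$, and then letting $d$ range over all odd values --- but you do not make that argument. The $d=1$ cross-check against Theorem~\ref{thm:not left} is not meaningful: that theorem concerns cyclic branched covers \emph{of} pretzel knots, not surgeries \emph{on} them, and the paper contains no independent identification you could match against.

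Your final ``internal consistency'' check also rests on a false premise, and this infects your sign bookkeeping. Replacing $\ep$ by $-\ep$ is \emph{not} mirroring: mirroring negates both continued-fraction entries, sending $K_{[2(2k+1),2\ell+1]}$ to $K_{[-2(2k+1),-(2\ell+1)]}$, not to $K_{[2(2k+1),-(2\ell+1)]}$. In particular, parts (1) and (2) of the lemma are not mirror-related --- both involve the \emph{same} right-handed pretzel $P$, whereas mirror symmetry would produce mirror pretzels --- so the identity $-S^3_{1/d}(P)=S^3_{-1/d}(\overline P)$ confirms nothing about the pair of formulas. Relatedly, your identification ``$\widetilde C=P(2\ell+1,\dots,2\ell+1)$ with handedness governed by $\mathrm{sign}(\ep)$'' points the wrong way: with the conventions fixed by the lemma, the cover of $K_{[2(2k+1),+(2\ell+1)]}$ itself (your un-mirrored setup for $\ep=+1$) is surgery on the \emph{left}-handed pretzel. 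The paper sidesteps this trap by running the whole argument on the mirror $-K_{[2(2k+1),2\ell+1]}$, which is precisely what produces the outer minus sign in $B(n,k,\ell,+)\cong -S^3_{1/d}(P)$; if you insist on working with $K$ directly, the handedness flip and the resulting orientation reversal must be tracked explicitly rather than deferred.
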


\begin{proof} 
(1) Let $K = - K_{[2(2k+1),2\ell+1]}$; see Figure~\ref{fig:figuretwobridge1}, which illustrates the case $k=1$, $\ell=2$.
\begin{figure}
\includegraphics[scale=.65]{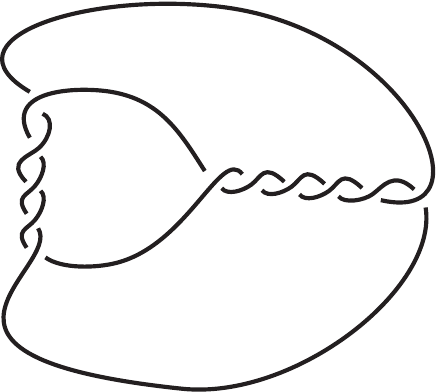}
\caption{The knot $-K_{[6,5]}$.}\label{fig:figuretwobridge1}
\end{figure}
Consider the 2-component link $J\cup C$ shown in Figure~\ref{fig:figuretwobridge2}.
\begin{figure}
\labellist
\Large
\pinlabel $J$ at 40 100
\pinlabel $C$ at 162 100
\endlabellist
\includegraphics[scale=.8]{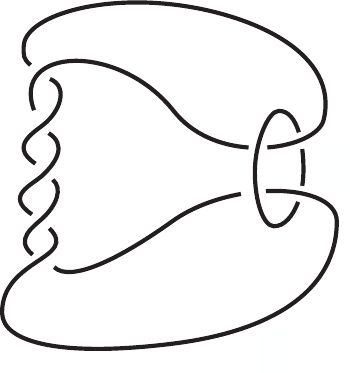}
\caption{The 2-component link $J \cup C$.  We can obtain $K$ from $J$ by doing $\frac{1}{2k+1}$-surgery on $C$.}\label{fig:figuretwobridge2}
\end{figure}
Performing $\frac{1}{2k+1}$-surgery on $C$ transforms $(S^3,J)$ to $(S^3,K)$.  Since $J$ is unknotted, the $n$-fold cyclic branched covering of $(S^3,J)$ is $S^3$; let $\widetilde C = \pi^{-1}(C)$ be the inverse image of $C$ under the branched covering projection~$\pi$.  Since $\ell k(C,J) = \pm2$,  and $n$ is odd, $\widetilde C$ is connected. In fact, since the components of $J\cup C$ are interchangeable, we can see from Figure~\ref{fig:figuretwobridge2} that $\widetilde C$ is the pretzel knot $P(2\ell +1,\ldots, 2\ell+1)$; see Figure~\ref{fig:figuretwobridge3}, which shows the case $k=1$, $\ell=2$, $n=3$.  
\begin{figure}
\includegraphics[scale=.5]{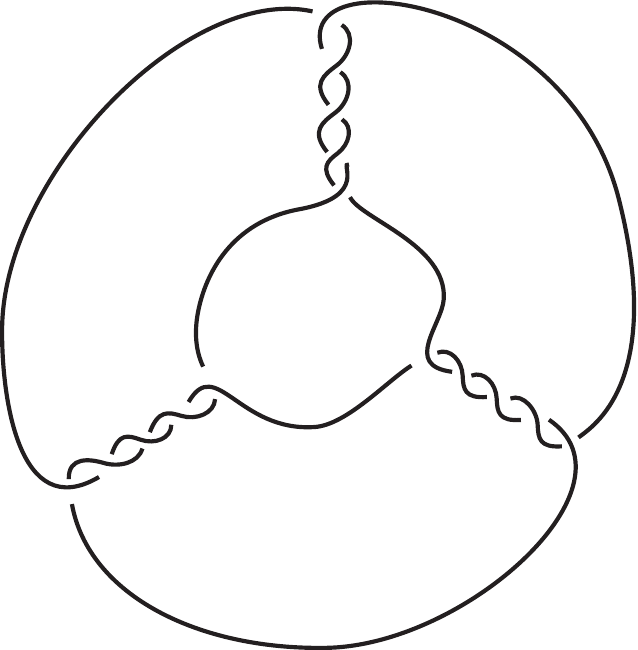}
\caption{The pretzel knot $P(5, 5, 5)$.}\label{fig:figuretwobridge3}
\end{figure}
Let $\mu,\lambda$ be a meridian and $0$-framed longitude of $C$ respectively.  We have that $\widetilde \lambda$ is connected, while since $\ell k(\mu,J) = 0$, we have $\pi^{-1}(\mu)$ consists of $n$ copies of $\widetilde\mu$, the meridian of $\widetilde C$.  Also, $C$ bounds a disk $D$ that meets $J$ in two points (see Figure~\ref{fig:figuretwobridge2} with the roles of $J$ and $C$ reversed).  Then $\pi^{-1}(D)$ is an orientable surface with boundary $\widetilde C$.  It follows that $\pi^{-1}(\lambda) = \widetilde\lambda$ is the $0$-framed longitude of $\widetilde C$.

Let $\alpha = \mu + (2k+1)\lambda$ be the slope $\frac{1}{2k+1}$ on the boundary of a neighborhood of $C$.  Since $n$ divides $2k+1$, $\pi^{-1}(\alpha)$ consists of $n$ copies of the slope $\widetilde\alpha = \widetilde\mu + d \widetilde\lambda$ on the boundary of a neighborhood of $\widetilde C$.  It follows that the $n$-fold cyclic branched covering of $K$ is obtained by $\frac{1}{d}$-surgery on $\widetilde C$, which proves (1).

The proof of (2) is completely analogous.
\end{proof}

\begin{proof}[Proof of Theorem~\ref{thm:twobridgesurgery}]
Since $P= P(2\ell+1,\ldots,2\ell+1)$ is alternating and negative, it follows from \cite[Theorem 0.2]{Roberts1995} that $S^3_r(P)$ has a taut foliation for all $r\in \mathbb{Q}$, $r\ge 0$. (Note that the convention for the signs of crossings in \cite{Roberts1995} is the opposite of the usual one.)  In particular $B(n,k,\ell,+)\cong -S^3_{\frac{1}{d}}(P)$ has a taut foliation.   Since $B(n,k,\ell,+)$ is an integer homology sphere, it is excellent by Lemma~\ref{lem:toroidalzs3}.
\end{proof}

\begin{remark}\label{rmk:nathan}
\begin{enumerate}
\item The manifolds $B(n,k,\ell,\pm)$ in Lemma~\ref{lem:pretzelsurgery} are not L-spaces by \cite[Theorem 1.5]{OSLens}.

\item\label{item:nathan} In the case $\ep =-1$, \cite[Theorem 0.2]{Roberts1995} does not apply to $S^3_{-\frac{1}{d}}(P(2\ell+1,\ldots, 2\ell+1))$; we do not know if $B(n,k,\ell,-)$ has a taut foliation, for any $k,\ell \ge 1$.

Nathan Dunfield has informed us that for both $k=\ell=1$ and $k=1$, $\ell=2$, $\pi_1 (B(2k+1,k,\ell,-))$ has a non-trivial representation into $PSL (2,\mathbb{R})$.  Since $B(2k+1,k,\ell,-)$ is an integer homology sphere, the obstruction to lifting to $\widetilde{SL_2(\mathbb{R})}$ vanishes.  Since $\widetilde{SL_2(\mathbb{R})}$ is a left-orderable group (it is a subgroup of $\operatorname{Homeo}_+(\mathbb{R})$), we may apply Theorem~\ref{thm:brw} to the lift.  Therefore, $\pi_1(B(2k+1,k,\ell,-))$ is left-orderable.  Dunfield has also shown that, in the case $\epsilon = +1$, $\pi_1(B(2k+1,k,\ell,+)$ has a non-trivial $PSL(2,\mathbb{R})$-representation for $k = 1$, $1 \leq \ell \leq 15$.

\item When $\ep =  +1$, we have $2(2k+1)(2\ell+1)+\ep \equiv 3 \pmod{4}$, and so \cite{Hu2013} implies that $\pi_1 (\Sigma_n (K_{[2(2k+1),2\ell+1]}))$ is left-orderable 
for all sufficiently large $n$.  We do not know if the corresponding manifolds have co-orientable taut foliations or whether or not they are L-spaces. 

Arguments similar to those that appear in \cite{Hu2013} are used in \cite{Tran2013} to show (in particular) that $\pi_1 (\Sigma_n (K_{[2(2k+1),- (2\ell +1)]})$ is also left-orderable for $n$ sufficiently large.  Moreover, in \cite{Tran2013} explicit lower bounds for $n$ are given, both for $\ep = +1$ and $\ep = -1$.  However, these bounds go to infinity as $\ell$ increases.  Again we do not know if these manifolds have co-orientable taut foliations, or whether or not they are L-spaces. 
\end{enumerate}
\end{remark}

\section{Total L-spaces arising from two-bridge knots}\label{sec:twobridge}

\begin{proof}[Proof of Theorem~\ref{thm:twobridge}]
That these manifolds are L-spaces is proved in \cite{T14}.  

It thus remains to show that $\pi_1(\Sigma_n(K_{[2\ell,-2k]}))$ is not left-orderable.  We first note that $[2\ell,-2k] = \frac{4k\ell-1}{2k} = [2\ell-1,1,2k-1]$.  We will use the presentation given in \cite[Proposition 3(d)]{DPT2005} for $\pi_1(\Sigma_n(K_{[2\ell-1,1,2k-1]}))$.  Replacing $k$ and $\ell$ in the presentation given there by $k-1$ and $\ell-1$, for each $i \in \Z/n$, the relators $r_i$ from this presentation can be written as 
\[
(x^{-k}_i x^k_{i+1})^{\ell} (x_{i+2}^{-k} x^k_{i+1})^{\ell-1}(x_{i+2}^{-k}x_{i+1}^{k-1}).
\]
Since there is an automorphism of $\pi_1(\Sigma_n(K_{[2\ell-1,1,2k-1]}))$ given by sending $x_i$ to $x_{i+1}$ for all $i \in \Z/n$, no $x_i$ is trivial.  Also, we see that in $r_i$, the occurrences of a given generator either all have positive exponent or all have negative exponent.  

We take $n = 4$ and write $a = x_0, b = x_1, c = x_2, d = x_3$.  Then, $\pi_1(\Sigma_4(K_{[2\ell,-2k]}))$ has a presentation with generators $a,b,c,d$ and relations 
\begin{align}
\label{eqn:rel0} abcd = 1,\\
\label{eqn:rel1} (a^{-k}b^k)^{\ell} (c^{-k}b^k)^{\ell-1}(c^{-k}b^{k-1}) = 1, \\
\label{eqn:rel2} (b^{-k}c^k)^{\ell}(d^{-k}c^k)^{\ell-1}(d^{-k}c^{k-1}) = 1, \\
\label{eqn:rel3} (c^{-k}d^k)^{\ell}(a^{-k}d^k)^{\ell-1}(a^{-k}d^{k-1}) = 1, \\
\label{eqn:rel4} (d^{-k}a^k)^{\ell}(b^{-k}a^k)^{\ell-1}(b^{-k}a^{k-1}) = 1. 
\end{align}

The signs of the exponents of the occurences of the generators in the relators \eqref{eqn:rel1}, \eqref{eqn:rel2}, \eqref{eqn:rel3}, and \eqref{eqn:rel4}, respectively, are
\[
(-,+,-,\circ), (\circ,-,+,-), (-,\circ,-,+), \text{ and } (+,-,\circ,-), 
\]
where the coordinates correspond to $(a,b,c,d)$, and $\circ$ indicates that the corresponding generator does not appear.  Also, the relator \eqref{eqn:rel0} has exponent signs $(+,+,+,+)$.      

Now suppose for contradiction that $\pi_1(\Sigma_4(K_{[2\ell,-2k]}))$ is left-orderable.  The above observations provide a number of restrictions on the purported order.  For instance, we cannot have $a < 1$, $b > 1$, and $c < 1$, since this implies the left hand side of \eqref{eqn:rel1} is positive, and consequently not 1.  By analyzing all of these conditions, we may assume that after possibly applying a cyclic automorphism to $(a,b,c,d)$, we have $a,b > 1$ and $c,d <1$.  

Since $b > 1$ and $c^{-1} > 1$, relation \eqref{eqn:rel1} gives 
\begin{equation}\label{eqn:ab}
a^{-k}b^k < 1.
\end{equation}
We rewrite \eqref{eqn:rel4} as
\[
d^{-k}(a^k d^{-k})^{\ell-1}(a^k b^{-k})^{\ell} a^{k-1} = 1.
\]
Since $a > 1$ and $d^{-1}>1$, we get
\begin{equation}\label{eqn:ba}
a^k b^{-k} < 1.
\end{equation}
We now claim that 
\begin{equation}\label{eqn:da}
da>1.
\end{equation}
If not, then $da<1$, and hence, since $d < 1$, we have $a<d^{-1} \leq d^{-k}$.  Therefore, $a^{-1}d^{-k} > 1$.  Again, we rewrite \eqref{eqn:rel4} as
\begin{equation}\label{eqn:adab}
a^k (d^{-k} a^k)^{l-1}(b^{-k} a^k)^{\ell} (a^{-1} d^{-k}) = 1.  
\end{equation}
Note that $a > 1$, $d^{-1}>1$, and further, by \eqref{eqn:ab}, $b^{-k}a^k >1$.  Therefore, the product of the terms above on the left hand side of \eqref{eqn:adab} is positive.  This gives a contradiction.  Therefore, we have that $da > 1$.  

Since $da>1$, \eqref{eqn:rel0} gives 
\begin{equation}\label{eqn:bc}
bc < 1.
\end{equation}
We rewrite \eqref{eqn:rel1} as 
\[
(b^k a^{-k})^{\ell-1} b^k (c^{-k} b^k)^{\ell-1} c^{-k} b^{k-1} a^{-k} = 1.  
\]
By \eqref{eqn:ba}, we have that $a^{-k} > b^{-k}$, and so we get
\[
1 > (b^k a^{-k})^{\ell-1} b^k (c^{-k}b^k)^{\ell-1} c^{-(k-1)} (c^{-1}b^{-1}).  
\]
However, since $b > 1, c^{-1} > 1$, $b^k a^{-k} > 1$ by \eqref{eqn:ba},  and $c^{-1}b^{-1} > 1$ by \eqref{eqn:bc}, we have obtained a  contradiction.  
\end{proof}

If $K$ is a two-bridge knot corresponding to a rational number of the form $[2a_1,2a_2,\ldots,2a_k]$ where $a_i > 0$, for $1 \leq i \leq k$, then $\Sigma_n(K)$ is the branched double cover of an alternating link \cite{MV}.  Therefore, for any $n \geq 2$, $\Sigma_n(K)$ is an L-space by \cite[Proposition 3.3]{OSBranched} and $\pi_1(\Sigma_n(K))$ is not left-orderable \cite[Theorem 4]{BGW} (see also \cite{Greene2011, Ito2013, LL}).  The results of \cite{Hu2013,Tran2013} say that for certain two-bridge knots $K$, we have that $\pi_1(\Sigma_n(K))$ is left-orderable for all sufficiently large $n$.  The situation for cyclic branched covers of torus knots is described in Theorem~\ref{thm:torus}.  These results all suggest the following question (compare to Conjecture~\ref{conj:asymptotics}).  

\begin{question}\label{quest:largerbranchedlo}
Let $K$ be a knot in $S^3$.  If $\pi_1(\Sigma_m(K))$ is left-orderable, is $\pi_1(\Sigma_n(K))$ left-orderable for $n \geq m$?  
\end{question}

Recall that the answer to Question~\ref{quest:largerbranchedlo} is yes if $K$ is prime and $m$ divides $n$.  An interesting example from this point of view is $5_2 = K_{[4,-2]}$.  It is shown in \cite{Hu2013} (see also \cite{Tran2013}) that $\pi_1(\Sigma_n(5_2))$ is left-orderable for all $n \geq 9$.  On the other hand, $\pi_1(\Sigma_n(5_2))$ is not left-orderable for $n = 2$ (since $\Sigma_2(5_2)$ is a lens space), $n = 3$ \cite{DPT2005}, and $n = 4$, by Theorem~\ref{thm:twobridge}.  

\begin{question}
Is $\pi_1(\Sigma_n(5_2))$ left-orderable for $5 \leq n \leq 8$?  
\end{question}

Masakazu Teragaito has informed us that Mitsunori Hori has shown that $\Sigma_5(5_2)$ is an L-space, and thus the conjectural equivalence of \eqref{equiv:lo} and \eqref{equiv:lspace} would suggest that $\pi_1(\Sigma_5(5_2))$ is not left-orderable.  

\section{Three-strand pretzel knots}\label{sec:pretzels}
\begin{proof}[Proof of Theorem~\ref{thm:not left}]
Let $K = P (2k+1,2\ell+1,2m+1)$ where $k, \ell, m \geq 1$.  That $\Sigma_3(K)$ is an L-space is proved in \cite{Ter}.  We will construct an explicit presentation of $\pi_1(\Sigma_3(K))$.  Let $M$ be the exterior of $K$.  It is shown in \cite{Tr} that $\pi_1 (M)$ has a presentation with generators $x,y,z$ and relators
\begin{equation}\label{eq:not left1}
(xy^{-1})^m x(xy^{-1})^{-m} (yz^{-1})^{k+1} z^{-1} (yz^{-1})^{-(k+1)} \end{equation}
\begin{equation}\label{eq:not left2}
(yz^{-1})^k y(yz^{-1})^{-k} (zx^{-1})^{\ell+1} x^{-1} (zx^{-1})^{-(\ell+1)} 
\end{equation}
\begin{equation}\label{eq:not left3}
(zx^{-1})^\ell z(zx^{-1})^{-\ell} (xy^{-1})^{m+1} y^{-1} (xy^{-1})^{-(m+1)}
\end{equation}
where $x,y,z$ are meridians of $K$.

It is straightforward to verify that the product of the relators \eqref{eq:not left1}, \eqref{eq:not left2} and \eqref{eq:not left3} is the identity in the free group on $x,y$ and $z$,  and so relator \eqref{eq:not left3} may be eliminated.  Let $X$ be the 2-complex corresponding to the resulting presentation; thus $X$ has a single 0-cell $c$, three 1-cells $e_x, e_y, e_z$ corresponding to $x,y,z$, respectively, and two 2-cells $D_1,D_2$ corresponding to the relators \eqref{eq:not left1} and \eqref{eq:not left2}.  Let $p : X_3 \to X$ be the 3-fold cyclic cover.  Then $p^{-1}(c) = \{c_0, c_1,c_2\}$, say, and $p^{-1} (e_x) = e_x^{(0)}\cup e_x^{(1)} \cup e_x^{(2)}$, where $e_x^{(i)}$ is a path in $X_3$ from $c_i$ to $c_{i+1}$, $i\in \zed/3$; similarly for $p^{-1} (e_y)$ and $p^{-1} (e_z)$.  Each 2-cell $D_j$, $j\in \{1,2\}$, lifts to three 2-cells $D_j^{(i)}$, $i\in \zed/3$. 

To get a presentation for $\pi_1(X_3)$ we choose a maximal tree in the 1-skeleton $X_3^{(1)}$; we take this to be $e_z^{(0)}\cup e_z^{(1)}$. 
The path $e_x^{(i)}$ now represents an element $x_i \in \pi_1 (X_3)$, and similarly for $e_y^{(i)}, e_z^{(i)}$. 
Note that $z_0 = z_1 =1 \in \pi_1 (X_3)$. 
The 2-cells $D_j^{(i)}$ give the following relations, where $i\in \zed/3$: 
\begin{equation}\label{eqn:pretz1}
(x_i y_i^{-1})^m x_i (x_{i+1} y_{i+1}^{-1} )^{-m} (y_{i+1} z_{i+1}^{-1})^{k+1} 
z_i^{-1}(y_iz_i^{-1})^{-(k+1)} =1
\end{equation}
\begin{equation}\label{eqn:pretz2}
(y_i z_i^{-1})^k y_i (y_{i+1} z_{i+1}^{-1})^{-k} (z_{i+1} x_{i+1}^{-1})^{\ell+1} 
x_i ^{-1} (z_i x_i^{-1})^{-(\ell+1)} =1.
\end{equation}
Note that $\pi_1(X_3) \cong \pi_1 (M_3)$, where $M_3$ is the three-fold cyclic cover of $M$.  Let $\mu$ be a meridian of $K$ and $\pi:M_3 \to M$ the covering map.  Then, recall that $\Sigma_3(K)$ is given by $M_3(\pi^{-1}(\mu^3))$.   Thus, to get a presentation of $\pi_1 (\Sigma_3 (K))$ we must adjoin the branching relations $x_0 x_1 x_2 = y_0 y_1 y_2 = z_0 z_1 z_2 =1$.  Since we have that $z_0 = z_1 = 1$, we must have that $z_2 = 1$ as well.  Eliminating $z_0, z_1,$ and $z_2$ from \eqref{eqn:pretz1} and \eqref{eqn:pretz2}, we obtain
\begin{equation}\label{eqn:pretz1new}
(x_i y_i^{-1})^m x_i (x_{i+1} y_{i+1}^{-1})^{-m} y_{i+1}^{k+1} y_i ^{- (k+1)} =1 
\end{equation}
\begin{equation}\label{eqn:pretz2new}
y_i^{k+1} y_{i+1}^{-k} x_{i+1}^{-(\ell+1)} x_i^\ell =1. 
\end{equation}

As in the proof of Theorem~\ref{thm:twobridge}, the relations \eqref{eqn:pretz1new} and \eqref{eqn:pretz2new} have the property that for each generator, the exponents of all the occurrences of that generator in the relator have the same sign.  These signs are given in Tables~\ref{table:pretzrel1} and \ref{table:pretzrel2}, for relations \eqref{eqn:pretz1new} and \eqref{eqn:pretz2new} respectively.

\begin{table}
\begin{center}
\begin{tabular}{| c | c | c | c | c | c | c |}
\hline
$i$  & $x_0$ & $x_1$ & $x_2$ & $y_0$ & $y_1$ & $y_2$ \\
\hline
$0$ & + & $\--$ & $\circ$ & $\--$ & + & $\circ$ \\
\hline
1 & $\circ$ & + & $\--$ & $\circ$ & $\--$ & + \\
\hline
2 & $\--$ & $\circ$ & + & + & $\circ$ & $\--$ \\
\hline
\end{tabular}
\vspace{.1in}
\caption{Signs of the exponents of the generators as they appear in the relation \eqref{eqn:pretz1new} for $i \in \mathbb{Z}/3$.  The notation $\circ$ indicates that the generator does not appear in the corresponding relation.}\label{table:pretzrel1}
\end{center}
\end{table}

\begin{table}
\begin{center}
\begin{tabular}{| c | c | c | c | c | c | c |}
\hline
$i$  & $x_0$ & $x_1$ & $x_2$ & $y_0$ & $y_1$ & $y_2$ \\
\hline
$0$ & + & $\--$ & $\circ$ & $+$ & $\--$ & $\circ$ \\
\hline
1 & $\circ$ & + & $\--$ & $\circ$ & $+$ & $\--$ \\
\hline
2 & $\--$ & $\circ$ & + & $\--$ & $\circ$ & $+$ \\
\hline
\end{tabular}
\vspace{.1in}
\caption{Signs of the exponents of the generators as they appear in the relation \eqref{eqn:pretz2new} for $i \in \mathbb{Z}/3$. The notation $\circ$ indicates that the generator does not appear in the corresponding relation.}\label{table:pretzrel2}
\end{center}
\end{table}
Suppose that there exists a left-invariant order, $<$, on $\pi_1(\Sigma_3 (K))$.  Since there is an automorphism of $\pi_1 (\Sigma_3 (K))$ sending $x_i$ to $x_{i+1}$, if some $x_i =1$ then $x_0 = x_1 =x_2 =1$.  Relation \eqref{eqn:pretz2new}
 then gives $y_i^{k+1} = y_{i+1}^k$, $i\in \zed/3$, which implies that each $y_i$ has finite order. Since we are assuming that $\pi_1(\Sigma_3 (K))$ is left-orderable, $y_0 = y_1 = y_2 =1$ and so $\pi_1 (\Sigma_3(K)) =1$, a contradiction.  Similarly, it follows that no $y_i$ is trivial in $\pi_1 (\Sigma_3 (K))$.

We say that an element $g \in \pi_1 (\Sigma_3 (K))$ is {\em positive} (respectively {\em negative}) if $g>1$ (respectively $g<1$).  Note that a product of elements with the same sign cannot be the identity.  Applying this to the relations \eqref{eqn:pretz1new} and \eqref{eqn:pretz2new}, Tables~\ref{table:pretzrel1} and \ref{table:pretzrel2} show that, for $i\in \zed/3$, if $x_i$ and $x_{i+1}$ have opposite sign then $y_i$ and $y_{i+1}$ have the same sign.  On the other hand, the relations $x_0 x_1 x_2 =1$, $y_0 y_1 y_2 =1$ show that the $x_i$ cannot all have the same sign, and the $y_i$ cannot all have the same sign.  This is clearly a contradiction. 
\end{proof}

\section{More examples}\label{sec:misc}
We discuss a few more families of cyclic branched covers of knots.

\subsection{Knot epimorphisms}
Other examples of knots $K$ with $\pi_1(\Sigma_n(K))$ left-orderable arise from the following easy lemma.

\begin{lemma}\label{lem:knotepi}
Let $K, K'$ be prime knots in $S^3$.  Suppose there exists an epimorphism $\varphi: \pi_1(S^3 - K) \to \pi_1(S^3 - K')$ such that $\varphi(\mu) = \mu'$ for meridians $\mu, \mu'$ of $K, K'$ respectively.  If $\pi_1(\Sigma_n(K'))$ is left-orderable, then so is $\pi_1(\Sigma_n(K))$.  
\end{lemma}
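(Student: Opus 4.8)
The plan is to use the epimorphism $\varphi$ to manufacture a surjection $\pi_1(\Sigma_n(K)) \twoheadrightarrow \pi_1(\Sigma_n(K'))$ and then invoke Theorem~\ref{thm:brw}. Write $X, X'$ for the exteriors of $K, K'$, and recall from Section~\ref{subsec:branchedcovers} that the $n$-fold cyclic cover $X_n \to X$ is classified by the composite $\pi_1(X) \to H_1(X) \cong \mathbb{Z} \to \mathbb{Z}/n$, which sends $\mu$ to a generator, and likewise for $X'_n \to X'$. First I would observe that since $\varphi(\mu) = \mu'$, the map $\varphi$ intertwines these two quotient maps to $\mathbb{Z}/n$. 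Consequently $\varphi$ carries $\ker\bigl(\pi_1(X) \to \mathbb{Z}/n\bigr) = \pi_1(X_n)$ into $\ker\bigl(\pi_1(X') \to \mathbb{Z}/n\bigr) = \pi_1(X'_n)$; and a quick diagram chase using the surjectivity of $\varphi$ (lift any $g' \in \pi_1(X'_n)$ to some $g \in \pi_1(X)$, and note $g$ lands in $0 \in \mathbb{Z}/n$, hence $g \in \pi_1(X_n)$) shows that the restriction $\varphi|_{\pi_1(X_n)} : \pi_1(X_n) \to \pi_1(X'_n)$ is again surjective.

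Next I would descend to the branched covers. Recall $\Sigma_n(K) = X_n(\mu_n)$, so on fundamental groups $\pi_1(\Sigma_n(K)) = \pi_1(X_n)/\langle\langle \mu_n \rangle\rangle$, where $\mu_n$ is the lift of $\mu^n$, and similarly $\pi_1(\Sigma_n(K')) = \pi_1(X'_n)/\langle\langle \mu'_n \rangle\rangle$. Since $\mu^n$ lies in $\pi_1(X_n)$ and $\varphi(\mu^n) = (\mu')^n$, once basepoints and lifts are matched consistently the element $\mu_n$ is sent by $\varphi$ to $\mu'_n$ up to conjugacy. Hence $\varphi$ maps the normal closure $\langle\langle \mu_n \rangle\rangle$ in $\pi_1(X_n)$ into the normal closure $\langle\langle \mu'_n \rangle\rangle$ in $\pi_1(X'_n)$, so the surjection $\varphi|_{\pi_1(X_n)}$ descends to a surjection $\bar\varphi : \pi_1(\Sigma_n(K)) \to \pi_1(\Sigma_n(K'))$.

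Finally, since $\pi_1(\Sigma_n(K'))$ is left-orderable and hence non-trivial, $\bar\varphi$ is a non-zero homomorphism from $\pi_1(\Sigma_n(K))$ to a left-orderable group. As $K$ is prime, $\Sigma_n(K)$ is irreducible by the equivariant sphere theorem and the Smith conjecture, and therefore $P^2$-irreducible; the exceptional case $\mathbb{R}P^3$ cannot arise here, since $\pi_1(\mathbb{R}P^3)$ admits no non-trivial homomorphism to a left-orderable group, as noted after Theorem~\ref{thm:brw}. Theorem~\ref{thm:brw} then yields that $\pi_1(\Sigma_n(K))$ is left-orderable.

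The hard part will be the bookkeeping in the descent step: one must verify that, under a consistent choice of basepoints and lifts to the cyclic covers, $\varphi$ genuinely sends the filling curve $\mu_n$ to $\mu'_n$ (and not to some unrelated element), so that the normal closures are respected and $\bar\varphi$ is well defined. This rests on the fact that $\mu^n$ is intrinsically characterized inside $\pi_1(X_n)$, up to conjugacy, as the lift of the $n$th power of a meridian, together with the hypothesis that $\varphi$ preserves meridians; everything else in the argument is formal.
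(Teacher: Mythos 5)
Your proof is correct and takes essentially the same approach as the paper's: both use the meridian-preserving hypothesis to induce an epimorphism $\pi_1(\Sigma_n(K)) \to \pi_1(\Sigma_n(K'))$ (the paper via the exact sequence $1 \to \pi_1(\Sigma_n(K)) \to \pi_1(S^3 - K)/\langle\langle \mu^n \rangle\rangle \to \mathbb{Z}/n \to 1$, you by restricting $\varphi$ to the cyclic covers and then quotienting by the lifted meridians) and then conclude with Theorem~\ref{thm:brw}. The bookkeeping you flag as the hard part is actually immediate: under the inclusion $\pi_1(X_n) \subset \pi_1(X)$ the filling curve $\mu_n$ is literally the element $\mu^n$, so $\varphi(\mu_n) = (\mu')^n = \mu'_n$ exactly, not merely up to conjugacy.
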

\begin{proof}
Since there is an exact sequence
\[
1 \to \pi_1(\Sigma_n(K)) \to \pi_1(S^3 - K)/\langle \langle \mu^n \rangle \rangle \to \Z/n \to 1, 
\]
and similarly for $K'$, we have that $\varphi$ induces an epimorphism from $\pi_1(\Sigma_n(K))$ to $\pi_1(\Sigma_n(K'))$.  The result now follows from Theorem~\ref{thm:brw}.  
\end{proof}

Examples to which Lemma~\ref{lem:knotepi} applies are given in \cite{KS1,KS2}, which list all pairs of distinct prime knots $K, K'$ with at most 10 crossings such that there exists an epimorphism $\pi_1(S^3 - K) \to \pi_1(S^3 - K')$ (written $K \geq K'$ in the notation of \cite{KS1,KS2}).  The authors note in \cite{KS2} that in all cases, there is actually a meridian-preserving epimorphism.  Also, the knot $K'$ is either $3_1$, $4_1$, or $5_2$.  It follows from Theorem~\ref{thm:torus} that for the knots with $K \geq 3_1$, we have $\pi_1(\Sigma_n(K))$ is left-orderable for $n \geq 6$.  It also follows from \cite{Hu2013} (see also \cite{Tran2013}) that for the knots $K$ with $K \geq 5_2$, we have $\pi_1(\Sigma_n(K))$ is left-orderable for $n \geq 9$.  Since all cyclic branched covers of $4_1$ have non-left-orderable fundamental group \cite{DPT2005}, we are not able to say anything about $\pi_1(\Sigma_n(K))$ for $K \geq 4_1$.

\subsection{Unknotting number one, determinant one knots}

\begin{proposition}
Let $K$ be an unknotting number one knot with $det(K) = 1$.  If the Li-Roberts Conjecture is true, then $\Sigma_2(K)$ is an excellent manifold.
\end{proposition}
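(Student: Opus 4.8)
The plan is to realize $\Sigma_2(K)$ as a half-integer surgery on a knot in $S^3$ and then feed this into the Li-Roberts Conjecture together with Lemma~\ref{lem:toroidalzs3}. The key external input is a classical theorem of Montesinos: if $K$ has unknotting number one, then $\Sigma_2(K)$ is obtained by a half-integer Dehn surgery on some knot $\kappa \subset S^3$; more precisely, $\Sigma_2(K) \cong S^3_{m/2}(\kappa)$ for some odd integer $m$ with $|m| = \det(K)$. First I would record the consequence of our hypothesis $\det(K) = 1$: this forces $|m| = 1$, so that $\Sigma_2(K) \cong S^3_{\pm 1/2}(\kappa)$. In particular $|H_1(\Sigma_2(K))| = \det(K) = 1$, so $\Sigma_2(K)$ is an integer homology sphere, and the surgery slope $\pm \tfrac{1}{2}$ lies in the interval $(-1,1)$.

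Next I would verify that the surgery knot $\kappa$ is non-trivial, which is exactly the hypothesis needed to apply the Li-Roberts Conjecture. If $\kappa$ were the unknot, then $S^3_{\pm 1/2}(\kappa) = S^3$, whence $\Sigma_2(K) = S^3$; by the Smith Conjecture \cite{BM} this would force $K$ to be trivial, contradicting the fact that an unknotting number one knot is non-trivial (the unknot has unknotting number zero). So $\kappa$ is non-trivial, and since $\pm \tfrac{1}{2} \in (-1,1)$, assuming the Li-Roberts Conjecture (Conjecture~\ref{conj:lr}) tells us that $\pm \tfrac{1}{2}$ is a CTF slope for $\kappa$. By the definition of a CTF slope, the filling $S^3_{\pm 1/2}(\kappa) \cong \Sigma_2(K)$ then admits a co-orientable taut foliation.

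Finally, because $\Sigma_2(K)$ is an integer homology sphere carrying a taut foliation, Lemma~\ref{lem:toroidalzs3} (Boileau-Boyer) immediately gives that $\pi_1(\Sigma_2(K))$ is left-orderable. Having established both a co-orientable taut foliation and left-orderability, I conclude that $\Sigma_2(K)$ is excellent. This mirrors the argument used in Lemma~\ref{lem:1/n}, where a CTF slope on an integer homology sphere filling is upgraded to excellence via Lemma~\ref{lem:toroidalzs3}.

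The only step requiring genuine care is the invocation of Montesinos' surgery description, specifically pinning down that the numerator of the half-integer coefficient is exactly $\pm\det(K)$, so that the slope lands in $(-1,1)$ precisely when $\det(K) = 1$. I do not expect this to be a serious obstacle, and the sign of the coefficient is in any case irrelevant: the Li-Roberts interval $(-1,1)$ is symmetric about $0$ and contains both $+\tfrac{1}{2}$ and $-\tfrac{1}{2}$, so the same conclusion holds regardless of which sign arises. Everything else is a direct application of the results already assembled in Section~\ref{sec:background}.
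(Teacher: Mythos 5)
Your proof is correct and follows essentially the same route as the paper: Montesinos' half-integer surgery description, the determinant-one hypothesis forcing the slope to be $\pm\tfrac{1}{2}$, then Conjecture~\ref{conj:lr} plus Lemma~\ref{lem:toroidalzs3}. Your explicit check that the surgery knot is non-trivial (via the Smith conjecture) is a detail the paper leaves implicit, and it is needed since the Li--Roberts Conjecture is stated only for non-trivial knots.
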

\begin{proof}
Suppose that $K$ is an unknotting number one knot with $det(K) = 1$.  Since $K$ has unknotting number 1, it follows that $\Sigma_2(K)$ is given by a half-integral surgery on a knot $K'$ in $S^3$.  Since $det(K) = 1$, $H_1(\Sigma_2(K)) = 0$ and thus $\Sigma_2(K) = S^3_{\pm \frac{1}{2}}(K')$.  The result now follows from Conjecture~\ref{conj:lr} and Lemma~\ref{lem:toroidalzs3}. 
\end{proof}

\bibliography{biblio}

\providecommand{\bysame}{\leavevmode\hbox to3em{\hrulefill}\thinspace}
\providecommand{\MR}{\relax\ifhmode\unskip\space\fi MR }
\providecommand{\MRhref}[2]{%
  \href{http://www.ams.org/mathscinet-getitem?mr=#1}{#2}
}
\providecommand{\href}[2]{#2}
\begin{thebibliography}{HRRW15}

\bibitem[BB15]{BB}
Michel Boileau and Steven Boyer, \emph{Graph manifold {$\Bbb{Z}$}-homology
  3-spheres and taut foliations}, J. Topol. \textbf{8} (2015), no.~2, 571--585.
  \MR{3356771}

\bibitem[BC17]{BC}
Steven Boyer and Adam Clay, \emph{Foliations, orders, representations,
  {L}-spaces and graph manifolds}, Adv. Math. \textbf{310} (2017), 159--234.
  \MR{3620687}

\bibitem[BG09]{BG}
V.~V. Bludov and A.~M.~W. Glass, \emph{Word problems, embeddings, and free
  products of right-ordered groups with amalgamated subgroup}, Proc. Lond.
  Math. Soc. (3) \textbf{99} (2009), no.~3, 585--608. \MR{2551464
  (2011c:20051)}

\bibitem[BGW13]{BGW}
Steven Boyer, Cameron~McA. Gordon, and Liam Watson, \emph{On {L}-spaces and
  left-orderable fundamental groups}, Math. Ann. \textbf{356} (2013), no.~4,
  1213--1245. \MR{3072799}

\bibitem[Bow16]{B}
Jonathan Bowden, \emph{Approximating {$C^0$}-foliations by contact structures},
  Geom. Funct. Anal. \textbf{26} (2016), no.~5, 1255--1296. \MR{3568032}

\bibitem[BRW05]{BRW}
Steven Boyer, Dale Rolfsen, and Bert Wiest, \emph{Orderable 3-manifold groups},
  Ann. Inst. Fourier (Grenoble) \textbf{55} (2005), no.~1, 243--288.
  \MR{2141698 (2006a:57001)}

\bibitem[BZ97]{BZ}
Steven Boyer and Xingru Zhang, \emph{Cyclic surgery and boundary slopes},
  Geometric topology ({A}thens, {GA}, 1993), AMS/IP Stud. Adv. Math., vol.~2,
  Amer. Math. Soc., Providence, RI, 1997, pp.~62--79. \MR{1470721 (98i:57005)}

\bibitem[Cal07]{Calegari2007}
Danny Calegari, \emph{Foliations and the geometry of 3-manifolds}, Oxford
  Mathematical Monographs, Oxford University Press, Oxford, 2007. \MR{2327361
  (2008k:57048)}

\bibitem[CD03]{CD}
Danny Calegari and Nathan~M. Dunfield, \emph{Laminations and groups of
  homeomorphisms of the circle}, Invent. Math. \textbf{152} (2003), no.~1,
  149--204. \MR{1965363 (2005a:57013)}

\bibitem[CLW13]{CLW}
Adam Clay, Tye Lidman, and Liam Watson, \emph{Graph manifolds,
  left-orderability and amalgamation}, Algebr. Geom. Topol. \textbf{13} (2013),
  no.~4, 2347--2368. \MR{3073920}

\bibitem[Deh94]{Dehornoy}
Patrick Dehornoy, \emph{Braid groups and left distributive operations}, Trans.
  Amer. Math. Soc. \textbf{345} (1994), no.~1, 115--150. \MR{1214782
  (95a:08003)}

\bibitem[DPT05]{DPT2005}
Mieczys{\l}aw~K. D{\c{a}}bkowski, J{\'o}zef~H. Przytycki, and Amir~A. Togha,
  \emph{Non-left-orderable 3-manifold groups}, Canad. Math. Bull. \textbf{48}
  (2005), no.~1, 32--40. \MR{2118761 (2005k:57001)}

\bibitem[EHN81]{EHN}
David Eisenbud, Ulrich Hirsch, and Walter Neumann, \emph{Transverse foliations
  of {S}eifert bundles and self-homeomorphism of the circle}, Comment. Math.
  Helv. \textbf{56} (1981), no.~4, 638--660. \MR{656217 (83j:57016)}

\bibitem[Gab83]{Gabai1}
David Gabai, \emph{Foliations and the topology of {$3$}-manifolds}, J.
  Differential Geom. \textbf{18} (1983), no.~3, 445--503. \MR{723813
  (86a:57009)}

\bibitem[Gab87]{Gabai3}
\bysame, \emph{Foliations and the topology of {$3$}-manifolds. {III}}, J.
  Differential Geom. \textbf{26} (1987), no.~3, 479--536. \MR{910018
  (89a:57014b)}

\bibitem[GL84]{GL}
Cameron~McA. Gordon and Richard~A. Litherland, \emph{Incompressible surfaces in
  branched coverings}, The {S}mith conjecture ({N}ew {Y}ork, 1979), Pure Appl.
  Math., vol. 112, Academic Press, Orlando, FL, 1984, pp.~139--152. \MR{758466}

\bibitem[GL14]{GLid}
Cameron Gordon and Tye Lidman, \emph{Taut foliations, left-orderability, and
  cyclic branched covers}, Acta Math. Vietnam. \textbf{39} (2014), no.~4,
  599--635. \MR{3292587}

\bibitem[Gre11]{Greene2011}
Joshua~E. Greene, \emph{Alternating links and left-orderability}, 2011,
  arXiv:1107.5232.

\bibitem[HRRW15]{HRRW}
Jonathan Hanselman, Jacob Rasmussen, Sarah~Dean Rasmussen, and Liam Watson,
  \emph{Taut foliations on graph manifolds}, 2015, arXiv:1508.05911.

\bibitem[HRW]{HRW}
Jonathan Hanselman, Jacob Rasmussen, and Liam Watson, Private communication.

\bibitem[Hu15]{Hu2013}
Ying Hu, \emph{Left-orderability and cyclic branched coverings}, Algebr. Geom.
  Topol. \textbf{15} (2015), no.~1, 399--413. \MR{3325741}

\bibitem[Ito13]{Ito2013}
Tetsuya Ito, \emph{Non-left-orderable double branched coverings}, Algebr. Geom.
  Topol. \textbf{13} (2013), no.~4, 1937--1965. \MR{3073904}

\bibitem[JN85]{JN}
Mark Jankins and Walter Neumann, \emph{Rotation numbers of products of circle
  homeomorphisms}, Math. Ann. \textbf{271} (1985), no.~3, 381--400. \MR{787188
  (86g:58082)}

\bibitem[KM04]{KM}
Peter Kronheimer and Tomasz Mrowka, \emph{Witten's conjecture and property
  {P}}, Geom. Topol. \textbf{8} (2004), 295--310 (electronic). \MR{2023280
  (2004m:57023)}

\bibitem[KMOS07]{KMOS}
Peter Kronheimer, Tomasz Mrowka, Peter Ozsv{\'a}th, and Zolt{\'a}n Szab{\'o},
  \emph{Monopoles and lens space surgeries}, Ann. of Math. (2) \textbf{165}
  (2007), no.~2, 457--546. \MR{2299739 (2008b:57037)}

\bibitem[KR14]{KR}
William~H. Kazez and Rachel Roberts, \emph{Approximating ${C}^0$ foliations},
  2014, Preprint.

\bibitem[KR15]{KR2}
William~H. Kazez and Rachel Roberts, \emph{{$C^0$} approximations of
  foliations}, 2015, arXiv:1509.08382.

\bibitem[KS05]{KS1}
Teruaki Kitano and Masaaki Suzuki, \emph{A partial order in the knot table},
  Experiment. Math. \textbf{14} (2005), no.~4, 385--390. \MR{2193401
  (2006k:57018)}

\bibitem[KS08]{KS2}
\bysame, \emph{A partial order in the knot table. {II}}, Acta Math. Sin. (Engl.
  Ser.) \textbf{24} (2008), no.~11, 1801--1816. \MR{2453061 (2009g:57010)}

\bibitem[LL12]{LL}
Adam~S. Levine and Sam Lewallen, \emph{Strong {L}-spaces and
  left-orderability}, Math. Res. Lett. \textbf{19} (2012), no.~6, 1237--1244.
  \MR{3091604}

\bibitem[LOT08]{LOT}
Robert Lipshitz, Peter Ozsv{\'a}th, and Dylan Thurston, \emph{Bordered
  {H}eegaard {F}loer homology: {I}nvariance and pairing}, 2008,
  arXiv:0810.0687.

\bibitem[LR14]{LR}
Tao Li and Rachel Roberts, \emph{Taut foliations in knot complements}, Pacific
  J. Math. \textbf{269} (2014), no.~1, 149--168. \MR{3233914}

\bibitem[LS07]{LS}
Paolo Lisca and Andr{\'a}s~I. Stipsicz, \emph{Ozsv\'ath-{S}zab\'o invariants
  and tight contact 3-manifolds. {III}}, J. Symplectic Geom. \textbf{5} (2007),
  no.~4, 357--384. \MR{2413308 (2009k:57020)}

\bibitem[MB84]{BM}
John~W. Morgan and Hyman Bass (eds.), \emph{The {S}mith conjecture}, Pure and
  Applied Mathematics, vol. 112, Academic Press, Inc., Orlando, FL, 1984,
  Papers presented at the symposium held at Columbia University, New York,
  1979. \MR{758459 (86i:57002)}

\bibitem[Min82]{Minkus1982}
Jerome Minkus, \emph{The branched cyclic coverings of {$2$} bridge knots and
  links}, Mem. Amer. Math. Soc. \textbf{35} (1982), no.~255, iv+68. \MR{643587
  (83g:57004)}

\bibitem[MSY82]{MSY}
William Meeks, III, Leon Simon, and Shing~Tung Yau, \emph{Embedded minimal
  surfaces, exotic spheres, and manifolds with positive {R}icci curvature},
  Ann. of Math. (2) \textbf{116} (1982), no.~3, 621--659. \MR{678484
  (84f:53053)}

\bibitem[MV01]{MV}
Michele Mulazzani and Andrei Vesnin, \emph{The many faces of cyclic branched
  coverings of 2-bridge knots and links}, Atti Sem. Mat. Fis. Univ. Modena
  \textbf{49} (2001), no.~suppl., 177--215, Dedicated to the memory of
  Professor M. Pezzana (Italian). \MR{1881097 (2003h:57010)}

\bibitem[Nai94]{Naimi1994}
Ramin Naimi, \emph{Foliations transverse to fibers of {S}eifert manifolds},
  Comment. Math. Helv. \textbf{69} (1994), no.~1, 155--162. \MR{1259611
  (94m:57060)}

\bibitem[NRL03]{NR}
V{\'{\i}}ctor N{\'u}{\~n}ez and Enrique Ram{\'{\i}}rez-Losada, \emph{The
  trefoil knot is as universal as it can be}, Topology Appl. \textbf{130}
  (2003), no.~1, 1--17. \MR{1966671 (2004f:57006)}

\bibitem[OS04]{OSGenus}
Peter Ozsv{\'a}th and Zolt{\'a}n Szab{\'o}, \emph{Holomorphic disks and genus
  bounds}, Geom. Topol. \textbf{8} (2004), 311--334. \MR{2023281 (2004m:57024)}

\bibitem[OS05a]{OSLens}
\bysame, \emph{On knot {F}loer homology and lens space surgeries}, Topology
  \textbf{44} (2005), no.~6, 1281--1300. \MR{2168576 (2006f:57034)}

\bibitem[OS05b]{OSBranched}
\bysame, \emph{On the {H}eegaard {F}loer homology of branched double-covers},
  Adv. Math. \textbf{194} (2005), no.~1, 1--33. \MR{2141852 (2006e:57041)}

\bibitem[OS11]{HFKQ}
Peter~S. Ozsv\'ath and Zolt\'an Szab\'o, \emph{Knot {F}loer homology and
  rational surgeries}, Algebr. Geom. Topol. \textbf{11} (2011), no.~1, 1--68.
  \MR{2764036}

\bibitem[Pat95]{Patton}
Robert~M. Patton, \emph{Incompressible punctured tori in the complements of
  alternating knots}, Math. Ann. \textbf{301} (1995), no.~1, 1--22. \MR{1312568
  (95k:57011)}

\bibitem[Pet09]{Peters}
Thomas Peters, \emph{On {L}-spaces and non left-orderable 3-manifold groups},
  2009, arXiv:0903.4495.

\bibitem[Rob95]{Roberts1995}
Rachel Roberts, \emph{Constructing taut foliations}, Comment. Math. Helv.
  \textbf{70} (1995), no.~4, 516--545. \MR{1360603 (96j:57032)}

\bibitem[Rob01]{Roberts2001}
\bysame, \emph{Taut foliations in punctured surface bundles. {II}}, Proc.
  London Math. Soc. (3) \textbf{83} (2001), no.~2, 443--471. \MR{1839461
  (2003j:57016)}

\bibitem[Ter14]{T14}
Masakazu Teragaito, \emph{Fourfold cyclic branched covers of genus one
  two-bridge knots are {$L$}-spaces}, Bol. Soc. Mat. Mex. (3) \textbf{20}
  (2014), no.~2, 391--403. \MR{3264624}

\bibitem[Ter15]{Ter}
\bysame, \emph{Cyclic branched covers of alternating knots and {$L$}-spaces},
  Bull. Korean Math. Soc. \textbf{52} (2015), no.~4, 1139--1148. \MR{3385757}

\bibitem[Tra15]{Tran2013}
Anh~T. Tran, \emph{On left-orderability and cyclic branched coverings}, J.
  Math. Soc. Japan \textbf{67} (2015), no.~3, 1169--1178. \MR{3376583}

\bibitem[Tro63]{Tr}
H.~F. Trotter, \emph{Non-invertible knots exist}, Topology \textbf{2} (1963),
  275--280. \MR{0158395 (28 \#1618)}

\end{thebibliography}
\bibliographystyle{amsalpha}

\end{document}